%%%%%%%%%%%%%%%%%%%%%%%%%%%%%%%%%%%
%%%%%%%%%%%%%%%%%%%%%%%%%%%%%%%%%%%
% Version by Amit 21.4.20
% Version by Tobias 22.4.20
% Version by Anton 7.6.20 bis
% Version by Amit 2.7.20
% Version by Anton 7.7.20
% Version by Amit 9.7.20
% Version by Anton 10.7.20
% Version by Amit 13.7.20
% Version by Anton 16.7.20
% Version by Amit 17.7.20
% Version (only spell check) by Bea 17.7.20
% Version by Anton 22.7.20 - submited to JSP
% Version by Tobias 22.7. small format changes
% Version by Tobias 32.7. Tiny format changes. Submission version
%%%%%%%%%%%%%%%%%%%%%%%%%%%%%%%%%%%
%%%%%%%%%%%%%%%%%%%%%%%%%%%%%%%%%%%

% only pdflatex, due to the figures

\documentclass[11pt]{amsart}
\usepackage[latin1]{inputenc}
\usepackage{amsmath}
\usepackage{amsfonts}
\usepackage{amssymb}
\usepackage{graphicx}
\usepackage{fourier}
\usepackage{hyperref}
\usepackage{enumerate}
\usepackage{esint}
\usepackage{bm}
\usepackage{ulem}
\usepackage{xcolor} % only for highlighting comment \anton
\usepackage{verbatim}        % for comments 

\usepackage[english]{babel}
\newtheorem{theorem}{Theorem}[section]
\newtheorem{lemma}[theorem]{Lemma}
\newtheorem{prop}[theorem]{Proposition}

\theoremstyle{definition}
\newtheorem{definition}[theorem]{Definition}

\newtheorem{corollary}[theorem]{Corollary}
\theoremstyle{remark}
\newtheorem{remark}[theorem]{Remark}

\newcommand{\inner}[1]{\left\langle#1\right\rangle}
\newcommand{\norm}[1]{\left\lVert#1\right\rVert}
\newcommand{\abs}[1]{\left\lvert#1\right\rvert}
\newcommand{\pa}[1]{\left( #1 \right)}
\newcommand{\rpa}[1]{\left[ #1 \right]}
\newcommand{\br}[1]{\left\lbrace #1\right\rbrace}

\newcommand{\R}{\mathbb{R}}
\newcommand{\CC}{\mathbb{C}}
\newcommand{\Z}{\mathbb{Z}}
\newcommand{\N}{\mathbb{N}}
\newcommand{\T}{\mathbb{T}}

\newcommand{\F}{\mathcal{F}}
\newcommand{\D}{\mathcal{D}}
\newcommand{\PP}{\mathbf{P}}

\numberwithin{equation}{section}

\newcommand{\ad}{\partial_x^{-1}}
\newcommand{\smin}{\sigma_{\mathrm{min}}}
\newcommand{\smax}{\sigma_{\mathrm{max}}}
 % in the end change to \newcommand{\anton}{\textcolor{black}}

\definecolor{bostonuniversityred}{rgb}{0.8, 0.0, 0.0}
 
\definecolor{byzantium}{rgb}{0.44, 0.16, 0.39}

%%%%%%%%%%%%%%%%%%%%%%%%%%%%%%%%%%
% new commands by Anton:
%\newcommand{\D}{{\bf D}}

\newcommand{\II}{{\bf I}}
\newcommand{\C}{{\bf C}}

\renewcommand{\span}{\operatorname{span}}

\renewcommand{\Re}{\operatorname{Re}}
\renewcommand{\Im}{\operatorname{Im}}
%%%%%%%%%%%%%%%%%%%%%%%%%%%%%%%%%%

%\title{On the non-isotropic Goldstein-Taylor Equation}
\title{Large time convergence of the non-homogeneous Goldstein-Taylor Equation}
\author{Anton Arnold, Amit Einav, Beatrice Signorello \& Tobias W\"ohrer}
%\date{\today}
%\author{Anton Arnold$^1$, Amit Einav$^2$, Beatrice Signorello$^1$ \& Tobias W\"ohrer$^1$ }
\address{$^1$Vienna University of Technology, Institute of Analysis and Scientific Computing, Wiedner Hauptstr. 8-10, A-1040 Wien, Austria}
\email{anton.arnold@tuwien.ac.at; beatrice.signorello@tuwien.ac.at}
\email{tobias.woehrer@tuwien.ac.at}
\address{$^2$Karl Franzens Universit\"at Graz, Institute of Mathematics and Scientific Computing, Heinrichstra\ss e 36, 8010, Graz, Austria}
\email{amit.einav@uni-graz.at}
\thanks{The authors gratefully acknowledge the support of the Hausdorff Research Institute for Mathematics
(Bonn), through the Junior Trimester Program on Kinetic Theory. The first and third authors  were  partially  supported  by  the  FWF  (Austrian Science Fund) funded SFB \#F65, the third and forth authors by the FWF-doctoral school W 1245.}

\begin{document}

\maketitle
\begin{abstract}
The Goldstein-Taylor equations can be thought of as a simplified version of a BGK system, where the velocity variable is constricted to a discrete set of values. It is intimately related to turbulent fluid motion and the telegrapher's equation. A detailed understanding of the large time behaviour of the solutions to these equations has been mostly achieved in the case where the relaxation function, measuring the intensity of the relaxation towards equally distributed velocity densities, is constant. The goal of the presented work is to provide a general method to tackle the question of convergence to equilibrium when the relaxation function is not constant, and to do so as quantitatively as possible. In contrast to the usual modal decomposition of the equations, which is natural when the relaxation function is constant, we define a new Lyapunov functional of pseudodifferential nature, one that is motivated by the modal analysis in the constant case, that is able to deal with full spatial dependency of the relaxation function. The approach we develop is robust enough that one can apply it to multi-velocity Goldstein-Taylor models, and achieve explicit rates of convergence. The convergence rate we find, however, is not optimal, as we show by comparing our result to the that found in \cite{BS}.
\end{abstract}
{\tiny
{KEYWORDS.}
BGK equation, hypocoercivity, large time behaviour, exponential decay, Lyapunov functional}
\\{\tiny{MSC.}
82C40 (Kinetic theory of gases in time-dependent statistical mechanics), 35B40 (Asymptotic behavior of solutions to PDEs), 35Q82 (PDEs in connection with statistical mechanics), 35S05 (Pseudodifferential operators as generalizations of partial differential operators)}

\section{Introduction}\label{sec:intro}

The object of this work is the large time analysis of the Goldstein-Taylor equations on the one-dimensional torus $\T$, i.e.\ on $[0,2\pi]$ with periodic boundary conditions, and for $t\in(0,\infty)$:
\begin{equation}\label{eq:gt}
\begin{gathered}
\partial_t f_+(x,t) + \partial_x f_+(x,t) = \frac{\sigma(x)}{2}(f_-(x,t)-f_+(x,t)),\\
\partial_t f_-(x,t) - \partial_x f_-(x,t) = - \frac{\sigma(x)}{2}(f_-(x,t)-f_+(x,t)),\\
f_{\pm}(x,0)= f_{\pm,0}(x),
\end{gathered}
\end{equation}
where $f_{\pm}(x,t)$ are the density functions of finding an element with a velocity $\pm 1$ in a position $x\in\mathbb{T}$ at time $t>0$. The function $\sigma\in L^{\infty}_+\pa{\mathbb{T}}:=\big\{f\in L^\infty(\T)\;\big|\;\mbox{essmin }f>0\big\}$ is the relaxation coefficient, and $f_{\pm,0}$
%$f_{\pm,0}\in L^1_{+}\pa{\mathbb{T}}$ 
are the initial conditions. 
Since \eqref{eq:gt} is mass conserving, its steady state is of the form
$$
  f_{\pm,\infty}(x) := f_\infty\ ,\quad x\in\T\ ;\qquad f_\infty:=\frac12 (f_{+,0}+f_{-,0})_{\mathrm{avg}},
$$
with the notation
\begin{equation}\label{eq:avg}
h_{\mathrm{avg}}:=\frac{1}{2\pi}\int_{0}^{2\pi} h(x)dx.
\end{equation}

The Goldstein-Taylor model was originally considered as a diffusion process, resulting as a limit of a discontinuous random migration in 1D, where particles may change direction with rate $\sigma$. It appeared in the context of turbulent fluid motion and the telegrapher's equation, see \cite{T22, G51}, respectively. 
\eqref{eq:gt} can also be seen as a special 1D case of a BGK-model (named after the three physicists Bhatnagar, Gross, and Krook \cite{BGK}) with a discrete set of velocities. Such equations commonly appear in applications like gas and fluid dynamics as velocity
discretisations of various kinetic models (e.g.\ the Boltzmann equation). The mathematical
analysis of such discrete velocity models has a long standing
tradition, see \cite{CIS87,Kaw91} and references therein.

Although the Goldstein-Taylor equation is very simple, it still exhibits an interesting and mathematically rich structure. Hence, it has been attracting continuous interest over the last 20 years. Most of its mathematical analyses was devoted to the following three topics: scaling limits, asymptotic preserving (AP) numerical schemes, and large time behaviour. In a diffusive scaling, the Gold\-stein-Taylor model can be viewed as a hyperbolic approximation to the heat equation \cite{Sa99}. Various AP-schemes for this model in the stiff relaxation regime (i.e.\ for $\sigma\to\infty$) were constructed and analyzed in \cite{Jin99, GT02, AHJP14}.
Since the large time convergence of solutions to \eqref{eq:gt} towards its unique steady state is also the topic of this work, we shall review the related literature in more detail:

Analytically, the main difficulty of \eqref{eq:gt} is with its hypocoercivity, as defined in \cite{Villani-book}: More specifically, the relaxation operator on the r.h.s.\ is not coercive on $\T\times \R^2$. Hence, for each fixed $x$, the r.h.s.\ by itself would drive the system to its local equilibrium, generated by the kernel of the relaxation operator, $\span\{\binom{1}{1}\}$, but the local mass (density) might be different at different positions. Convergence to the global equilibrium $(f_\infty,f_\infty)^T$ only arises due to the interplay between local relaxation and the transport operator on the l.h.s.\ of \eqref{eq:gt}.

The Goldstein-Taylor model was also considered in the analysis of \cite{ACT02}, if one chooses the velocity matrix to be $V=\mbox{diag}(1,-1)$ and the relaxation matrix $\bm{A}(x)$ to be
%$A(x) = \sigma(x)/2\;[1\; -1;\; -1\; 1] \ge 0$. 
$$\bm{A}(x) = \frac{\sigma(x)}{2} \begin{pmatrix}
1& -1\\
-1& 1
\end{pmatrix}\ge 0.$$
Exponential convergence to the steady state is then proved in the aforementioned work for the system \eqref{eq:gt} \textit{with inflow boundary conditions}. Such boundary conditions make the problem significantly easier than in the periodic set-up envisioned here, in particular it allows for $\sigma(x)$ to be zero on a subset of $\T$, an issue that proves to be far more difficult in our setting.\\ 
In \cite{DS09} the authors proved polynomial decay towards the equilibrium, allowing $\sigma(x)$ to vanish at finitely many points.\\
In \cite{Tr13} the author proved exponential decay for solutions to \eqref{eq:gt} for a more general $\sigma(x)\ge0$. That work is based on a (non-local in time) {\it weak coercive estimate} on the damping.\\
All of the papers mentioned so far did not focus on the optimality of the (exponential) decay rate. Using the equivalence between \eqref{eq:gt} and the telegrapher's equation, the authors of 
\cite{BS} have shown that this optimal decay rate, $\mu(\sigma)$, is the minimum of $\sigma_{\mathrm{avg}}$ and the spectral gap of the telegrapher's equation (excluding the case when some of those eigenvalues with real part equal to $\mu(\sigma)$ are defective). The precise value of this spectral gap, however, is hardly accessible - even for simple non-constant relaxation functions $\sigma(x)$ (see e.g.\ Appendix \ref{appsec:lack}). Moreover, it is based on the restrictive requirement $f_{\pm,0}\in H^1(\T)$, and cannot be extended to other discrete velocity models in 1D. The reason for the latter is that \cite{BS} heavily relies on the equivalence of \eqref{eq:gt} to the telegrapher's equation.

The issues above motivated our subsequent analysis: We {introduce a method for $L^2$--initial data} that can be extended to other discrete velocity BGK-models (as illustrated below for a $3-$velocities system), and that yields sharp rates for constant $\sigma$. Moreover, and most importantly, it {is applicable in the} general non-homogeneous $\sigma\in L^{\infty}_+\pa{\mathbb{T}}$ {case} and yields in these cases an explicit, quantitative lower bound for the decay rate. In {this} case, however, it will not achieve an optimal rate of convergence\footnote{at least compared to the $H^1$-result in \cite{BS}} to the appropriate equilibrium of the system.  
The method to be derived here will use a Lyapunov function technique in the spirit of the earlier works \cite{Villani-book, DMS, AAC, Achleitner2018}. \\

This paper is structured as follows: In \S\ref{subsec:main} we give the analytical setting of the problem and present our main convergence result (Theorem  \ref{thm:main}). 
In \S\ref{sec:pre} we recall some analytical results which will be needed in the analysis that will follow, and explore some properties of the entropy functional $E_\theta$ and the anti-derivative of functions on $\T$, defined in \eqref{eq:def_entropy} and \eqref{eq:antiderivative}, respectively.  \S \ref{sec:sigma_constant} is devoted to the case where $\sigma(x)=\sigma$ is constant, which will motivate our more general approach: Based on a modal decomposition of the Goldstein-Taylor system and its spectral analysis we derive the entropy functional $E_\theta$, first on a modal level and then as a pseudo-differential operator in physical space. We conclude by proving part \eqref{item:convergence_sigma_constant} of our main theorem. Continuing to  \S\ref{sec:sigma_general}, we will prove, using a perturbative approach to the problem, part \eqref{item:convergence_sigma_not_constant} of our main theorem. The robustness of our method will be shown in \S\ref{sec:three-velocities} where we use it to obtain an explicit rate of convergence for a $3-$velocities Goldstein-Taylor model. Finally, in Appendix \ref{appsec:lack} we discuss a potential way to improve the technique from \S\ref{sec:sigma_general}, and explicitly show the lack of optimality of it for a particular case {of $\sigma(x)$}.

%%%%%%%%%%%%%%%%%%%%%%%%%%%%%%%%%%%%%%%%%%%%%%%%%%%%%%%%%

\section{The setting of the problem and main results}\label{subsec:main} To better understand the Goldstein-Taylor system, \eqref{eq:gt}, one starts by recasting it in the macroscopic variables
$$
  u: = f_+ + f_-,\quad\quad v:=f_+-f_-,
$$
representing the spatial (mass) density and the {flux} density, respectively.
The macroscopic variables yield the following system of equations on $\mathbb{T}\times (0,\infty)$:
\begin{equation}\label{eq:gt_recast}
\begin{gathered}
\partial_t u(x,t) +\partial_x v(x,t) =0,\\
\partial_t v(x,t) +\partial_x u(x,t) =- \sigma(x)v(x,t),\\
u(.,0)=u_0:=f_{+,0}+f_{-,0},\quad v(.,0)=v_0:=f_{+,0}-f_{-,0}\ ,
\end{gathered}
\end{equation}
whose theory of existence and uniqueness is straightforward (since the r.h.s.\ is a bounded perturbation of the transport operator; see \S2 in \cite{DS09} or, more generally, \cite{Pa92}). Moreover, when one tries to understand the qualitative behaviour of \eqref{eq:gt_recast}, one notices that the equation for $u$ speaks of ``total mass conservation'' (upon integration over the spatial interval $(0,2\pi)$), while the equation for $v$ predicts a strong decay to zero for the function. This means, at least intuitively, that the difference between $f_+$ and $f_-$ should go to zero, and that their sum retains its mass. As the main driving force of the equation is a transport operation on the torus, we will not be surprised to learn that the large time behaviour of $u$ (and since $v$ should go to zero, of $f_+$ and $f_-$ as well) is convergence to a constant. All of this has been verified in several cases, most generally in \cite{BS}.

We now set the framework that will assist us in the investigation of the large time behaviour of \eqref{eq:gt_recast}, in a relatively general case.
The natural Hilbert space to consider this problem is $L^2(\T)^{\otimes 2}$, with the standard inner product for each component: 
$$\inner{f_1,f_2}:=\frac{1}{2\pi}\int_{0}^{2\pi} f_1(x)\overline{f_2(x)}dx,$$
where the bar denotes complex conjugation. 
Since \eqref{eq:gt} and \eqref{eq:gt_recast} are (only) hypocoercive, the symmetric part of their generators (i.e.\ the operators on their r.h.s.) are not coercive on $L^2(\T)^{\otimes 2}$. Hence, the standard $L^2$--norm cannot serve as a usable Lyapunov functional. As is typical for hypocoercive equations (see \cite{Villani-book, DMS, AAC}), a possible remedy to this problem is to consider a ``twisted'' norm (often also referred to as {\it entropy functional}), constructed in a way that this functional strictly decays along each trajectory $(u(t),v(t))$. 
\\ The following functional, which will be our entropy functional, is not an ansatz, and its origin will be derived in \S\ref{sec:sigma_constant}. Moreover, we will show that it will yield the sharp exponential decay for constant $\sigma$, when one chooses $\theta=\theta(\sigma)$ appropriately.

\begin{comment}
As the equation is a typical equation in the class of the so-called ``Kinetic Equations'', one can expect that the convergence to equilibrium will be governed by (or expressed by) an \textit{Entropy Functional}, which in many cases replaces the ``standard norm'' of the space to allow to the treatment of non-linearities or degeneracies of the system. \\
In our case we will consider the underlying Hilbert space
$$L^2\pa{\mathbb{T}}:=\br{f:\T\to\mathbb{C}\;|\;f\text{ is measurable and } \int_{0}^{2\pi}\abs{f(x)}^2dx<\infty}$$
together with the standard inner product
$$\inner{f_1,f_2}:=\frac{1}{2\pi}\int_{0}^{2\pi} f_1(x)\overline{f_2(x)}dx,$$
where the bar denotes complex conjugation.
The entropy functional for the pair $(u(t),v(t))$ that will capture an exponential decay behaviour of \eqref{eq:gt_recast} is given by a slight ``twisting'' of the standard $L^2$ norm by an addition of a particular mixed term.
\end{comment}

\begin{definition}\label{def:entropy}
Let $f,g\in L^2\pa{\T}$ and let $\theta>0$ be given. Then we define the {\it entropy} $E_\theta(f,g)$ as
\begin{equation}\label{eq:def_entropy}
E_\theta(f,g):=\norm{f}^2+\norm{g}^2 -\frac{\theta}{2\pi}\int_{0}^{2\pi} \Re\pa{\partial_x^{-1}f (x)\overline{g(x)}}dx.
\end{equation}
Here, the {\it anti-derivative} of $f$ is defined as
\begin{equation}\label{eq:antiderivative}
\partial_x^{-1}f (x):=\int_{0}^{x}f(y)dy - \pa{\int_{0}^{x}f(y)dy }_{\mathrm{avg}}\ ,
\end{equation}
with the average defined in \eqref{eq:avg}.
%\begin{equation}\label{eq:avg}
%h_{\mathrm{avg}}:=\frac{1}{2\pi}\int_{0}^{2\pi} h(x)dx.
%\end{equation}
The normalization constant in \eqref{eq:antiderivative} is chosen such that $(\partial_x^{-1}f)_{\mathrm{avg}}=0$.
\end{definition}

Several recent studies (like \cite{DMS, AAC}) considered the Goldstein-Taylor system with constant $\sigma$. This case can be investigated fairly easy as one is able to utilise Fourier analysis in this setting, and construct a Lyapunov functional as a sum of quadratic functionals of the Fourier modes. However, the moment we change $\sigma(x)$ to a non-constant function - even to one that is natural in the Fourier setting, such as sine or cosine - the Fourier analysis becomes nigh impossible to solve. \\
The main idea that guided us in our approach was to re-examine the case where $\sigma$ is constant and \textit{to recast the modal Fourier norm by using a pseudo-differential operator}, without needing its modal decomposition. This functional, which is exactly $E_\theta$ for particular choices of $\theta=\theta(\sigma)$, can then be \textit{extended} to the case where $\sigma(x)$ is not constant, yielding quantitative estimates for the convergence. As the nature of this approach is perturbative, our decay rates are not optimal. The methodology itself, however, is fairly robust, and is viable in other cases, such as the multi-velocity Goldstein-Taylor model (as we shall see).

The main theorem we will show in this paper, with the use of the vector notation
\begin{equation}\label{vector-not}
  f(t):=\binom{f_+(t)}{f_-(t)}\ ,\quad f_0:=\binom{f_{+,0}}{f_{-,0}}\ ,
\end{equation}
is the following:
\begin{theorem}\label{thm:main}
Let $u,v\in C([0,\infty);L^2\pa{\T})$ be mild\footnote{We use {\it mild solution} in the termonology of semigroup theory \cite{Pa92}.} real valued solutions to \eqref{eq:gt_recast} with initial datum $u_0,\, v_0\in L^2\pa{\T}$. Denoting by $u_{\mathrm{avg}}=\pa{u_0}_{\mathrm{avg}}$ we have:
\begin{enumerate}[a)]
\item\label{item:convergence_sigma_constant} \underline{If $\sigma(x)=\sigma$ is constant} we have that:\\
If $\sigma\not=2$ then
$$E_{\theta(\sigma)}\pa{u(t)-u_{\mathrm{avg}},v(t)} \leq E_{\theta(\sigma)}\pa{u_0-u_{\mathrm{avg}},v_0}e^{-2\mu\pa{\sigma} t}$$
where 
$$\theta\pa{\sigma}:=\begin{cases} \sigma, & 0<\sigma<2 \\  \frac{4}{\sigma}, & \sigma>2 \end{cases}, \quad \mu\pa{\sigma}:=\begin{cases} \frac{\sigma}{2}, & 0<\sigma<2 \\ \frac{\sigma}{2}-\sqrt{\frac{\sigma^2}{4}-1}, & \sigma>2 \end{cases},$$
and if $\sigma=2$ then for any $0<\epsilon<1$
$$E_{\frac{2\pa{2-\epsilon^2}}{2+\epsilon^2}}\pa{u(t)-u_{\mathrm{avg}},v(t)} \leq E_{\frac{2\pa{2-\epsilon^2}}{2+\epsilon^2}}\pa{u_0-u_{\mathrm{avg}},v_0}e^{-2\pa{1-\epsilon}t}.$$
Consequently if $\sigma \not=2$
\begin{equation}\label{eq:main_for_f_pm_constant}
\begin{gathered}
%\norm{f_+(t)-f_{\infty}} + \norm{f_-(t)-f_{\infty}}  
%\leq C_{\sigma}\pa{\norm{f_{+,0}-f_{\infty}} + \norm{f_{-,0}-f_{\infty}}}
%e^{-\mu\pa{\sigma}t}
\Big\|f(t)-\binom{f_\infty}{f_\infty}\Big\| \leq C_{\sigma} \Big\|f_0-\binom{f_\infty}{f_\infty}\Big\| e^{-\mu\pa{\sigma}t},
\end{gathered}
\end{equation}
where
\begin{equation}\label{eq:C_sigma_and_f_infty}
C_{\sigma}:=\begin{cases} \sqrt{\frac{2+\sigma}{2-\sigma}}, & 0<\sigma<2 \\  \sqrt{ \frac{\sigma+2}{\sigma-2}}, & \sigma>2 \end{cases}, \quad f_{\infty}=\frac{u_{\mathrm{avg}}}{2}\ ,
\end{equation}
and the decay rate $\mu(\sigma)$ is sharp.\\
For $\sigma=2$ we have that 
\begin{equation}\label{eq:main_for_f_pm_constant_ep}
\begin{gathered}
%\norm{f_+(t)-f_{\infty}} + \norm{f_-(t)-f_{\infty}}  
%\leq \frac{4}{\epsilon}\pa{\norm{f_{+,0}-f_{\infty}} + \norm{f_{-,0}-f_{\infty}}}
%e^{-\pa{1-\epsilon}t}\ .
\Big\|f(t)-\binom{f_\infty}{f_\infty}\Big\| \leq \frac{\sqrt2}{\epsilon} \Big\|f_0-\binom{f_\infty}{f_\infty}\Big\| e^{-\pa{1-\epsilon}t}\ .
\end{gathered}
\end{equation}
\medskip

\item\label{item:convergence_sigma_not_constant} \underline{If $\sigma(x)$ is non-constant} such that 
$$0<\smin:=\inf_{x\in\T}\sigma(x) < \sup_{x\in\T}\sigma(x)=:\smax<\infty,$$
then by defining
\begin{equation}\label{eq:def_of_theta_ast}
\theta^\ast:=\min\pa{\smin,\frac{4}{\smax}}
\end{equation}
and
\begin{equation}\label{eq:value_of_alpha}
\alpha^\ast:=\alpha^\ast\pa{\smin,\smax}:=\begin{cases}
\frac{\smin\pa{4+2\sqrt{4-\smin^2}-\smin\smax}}{4+2\sqrt{4-\smin^2}-\smin^2},  & \smin < \frac{4}{\smax}\\
\smax - \sqrt{\smax^2-4}, & \smin \geq  \frac{4}{\smax} 
\end{cases}
\end{equation}
we have that 
$$E_{\theta^\ast}\pa{u(t)-u_{\mathrm{avg}},v(t)} \leq E_{\theta^\ast}\pa{u_0-u_{\mathrm{avg}},v_0} 
e^{-\alpha^\ast t},$$
%e^{-\alpha^\ast\pa{\smin,\smax}t},$$
and as result
\begin{equation}\label{eq:main_for_f_pm_non_constant}
\begin{gathered}
%\norm{f_+(t)-f_{\infty}} + \norm{f_-(t)-f_{\infty}}  
%\leq 2\sqrt{2\cdot \frac{2+\theta_\ast}{2-\theta_\ast}}\pa{\norm{f_{+,0}-f_{\infty}} 
%+ \norm{f_{-,0}-f_{\infty}} } e^{-\frac{\alpha^\ast\pa{\smin,\smax}}{2}t},
\Big\|f(t)-\binom{f_\infty}{f_\infty}\Big\| 
\leq \sqrt{\frac{2+\theta_\ast}{2-\theta_\ast}} 
\Big\|f_0-\binom{f_\infty}{f_\infty}\Big\| 
e^{-\frac{\alpha^\ast}{2}t}\ ,
%e^{-\frac{\alpha^\ast\pa{\smin,\smax}}{2}t}\ ,
\end{gathered}
\end{equation}
with $f_\infty$ defined in \eqref{eq:C_sigma_and_f_infty}.
\end{enumerate}
\end{theorem}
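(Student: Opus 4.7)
The plan is to prove exponential decay of $E_\theta(\tilde u(t),v(t))$ at the stated rate (with $\tilde u:=u-u_{\mathrm{avg}}$), and then translate this into the $L^2$-estimate on $f-\binom{f_\infty}{f_\infty}$ via the norm equivalence
\[
(1-\tfrac{\theta}{2})(\|\tilde u\|^2+\|v\|^2)\;\leq\;E_\theta(\tilde u,v)\;\leq\;(1+\tfrac{\theta}{2})(\|\tilde u\|^2+\|v\|^2),\qquad \theta\in(0,2),
\]
which follows from Cauchy-Schwarz combined with the Poincaré-Wirtinger bound $\|\partial_x^{-1}\tilde u\|\leq\|\tilde u\|$ (immediate from the Fourier representation of $\partial_x^{-1}$). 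Together with $\bigl\|f-\binom{f_\infty}{f_\infty}\bigr\|^2=\tfrac12(\|\tilde u\|^2+\|v\|^2)$, this delivers the prefactor $\sqrt{(2+\theta)/(2-\theta)}$ in \eqref{eq:main_for_f_pm_constant} and \eqref{eq:main_for_f_pm_non_constant}.

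For part (a), with constant $\sigma$, I would exploit translation invariance to pass to Fourier series: the reduced system decouples into $2\times 2$ complex ODEs $\dot\xi_k=B_k\xi_k$ for $\xi_k:=(\hat{\tilde u}_k,\hat v_k)^T$ on each mode $k\neq 0$, with $B_k=\bigl(\begin{smallmatrix}0 & -ik\\ -ik & -\sigma\end{smallmatrix}\bigr)$ and eigenvalues $\tfrac12(-\sigma\pm\sqrt{\sigma^2-4k^2})$; the slowest mode $|k|=1$ already pins the optimal rate to $\mu(\sigma)$. By Parseval $E_\theta$ splits as $\sum_{k\neq 0}\xi_k^*P_k\xi_k$ plus an easily handled $v_{\mathrm{avg}}^2$-piece (decaying at rate $2\sigma\geq 2\mu$), where $P_k=\bigl(\begin{smallmatrix}1 & -i\theta/(2k)\\ i\theta/(2k) & 1\end{smallmatrix}\bigr)$ is positive definite iff $\theta<2$. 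The mode-wise decay requirement $B_k^*P_k+P_kB_k+2\mu P_k\leq 0$ reduces, after explicit $2\times 2$ computation, to the scalar condition
\[
4\mu^2-4\mu\sigma+2\theta\sigma-\theta^2\;\geq\;\frac{\theta^2(\sigma-2\mu)^2}{4k^2}\qquad\forall\,|k|\geq 1.
\]
For $\sigma<2$ the pair $\theta=\sigma$, $\mu=\sigma/2$ annihilates both sides for every $k$; for $\sigma>2$ the pair $\theta=4/\sigma$, $\mu=\mu(\sigma)$ saturates at $|k|=1$ and is strict for $|k|\geq 2$, and sharpness of $\mu(\sigma)$ is then inherited from the spectrum of $B_{\pm 1}$. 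The critical case $\sigma=2$ is handled by a small detuning of $\theta$ below $2$, which is exactly what produces the $1/\epsilon$-factor in \eqref{eq:main_for_f_pm_constant_ep}.

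For part (b), Fourier decoupling is unavailable, so I would compute $\tfrac{d}{dt}E_{\theta^*}(\tilde u,v)$ directly in physical space. Integration by parts on $\T$ yields $\tfrac{d}{dt}(\|\tilde u\|^2+\|v\|^2)=-\tfrac{1}{\pi}\int_0^{2\pi}\sigma v^2\,dx$ (the transport pieces cancel), while $\partial_t\partial_x^{-1}\tilde u=-v+v_{\mathrm{avg}}$, the $v$-equation, and the torus identity $\int\partial_x^{-1}\tilde u\cdot\partial_x\tilde u\,dx=-\int\tilde u^2\,dx$ give
\[
\tfrac{d}{dt}M=\|\tilde u\|^2-\|v\|^2+v_{\mathrm{avg}}^2-\tfrac{1}{2\pi}\int_0^{2\pi}\sigma\,\partial_x^{-1}\tilde u\cdot v\,dx,\quad M:=\tfrac{1}{2\pi}\int_0^{2\pi}\partial_x^{-1}\tilde u\cdot v\,dx.
\]
Inserting $\smin\leq\sigma(x)\leq\smax$ bounds the dissipative part below via $\int\sigma v^2\,dx\geq 2\pi\smin\|v\|^2$ and the indefinite cross-term above via Cauchy-Schwarz and Poincaré-Wirtinger: $\bigl|\int\sigma\,\partial_x^{-1}\tilde u\cdot v\,dx\bigr|\leq 2\pi\smax\|\tilde u\|\|v\|$. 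The inequality $\tfrac{d}{dt}E_{\theta^*}+\alpha^*E_{\theta^*}\leq 0$ then becomes a quadratic-form inequality in $(\|\tilde u\|,\|v\|,|M|)$, whose $2\times 2$ determinant optimization over $\alpha^*$ produces the two branches of \eqref{eq:value_of_alpha} according to whether $\smin<4/\smax$ or $\smin\geq 4/\smax$, exactly mirroring the $\sigma<2$ versus $\sigma>2$ dichotomy of part (a), with $\theta^*=\min(\smin,4/\smax)$ playing the role of $\theta(\sigma)$.

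The main obstacle is squarely in part (b): the estimates $\int\sigma v^2\,dx\geq 2\pi\smin\|v\|^2$ and $\bigl|\int\sigma\,\partial_x^{-1}\tilde u\cdot v\,dx\bigr|\leq 2\pi\smax\|\tilde u\|\|v\|$ are coarse, discarding all information about the spatial correlation of $\sigma(x)$ with the solution, and this is precisely why $\alpha^*$ cannot be sharp—the phenomenon the authors acknowledge and illustrate in Appendix \ref{appsec:lack}. The compensation is robustness: the same Lyapunov functional $E_\theta$ and the same $2\times 2$ determinant optimization carry over almost verbatim to the three-velocity variant of \S\ref{sec:three-velocities}.
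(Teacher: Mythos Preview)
Your plan for part~(a) is essentially the modal derivation the paper carries out in \S\ref{sec4.2}--\S\ref{entropy-derivation}, and it is correct; the paper then also gives a direct physical-space proof (Theorem~\ref{thm:diff_ineq_sigma_constant}) precisely to prepare the machinery for part~(b), but either route works here. The norm-equivalence reduction to $f_\pm$ matches the paper's argument.

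Your approach to part~(b), however, has a genuine gap: the coarse replacements $\int\sigma v^2\,dx\geq 2\pi\,\smin\|v\|^2$ and $\bigl|\int\sigma\,\partial_x^{-1}\tilde u\cdot v\,dx\bigr|\leq 2\pi\,\smax\|\tilde u\|\|v\|$ \emph{cannot} deliver the stated rate $\alpha^*$. After combining the cross term with the $-\alpha\theta M$ piece coming from $\alpha E_\theta$, the resulting $2\times2$ discriminant condition is
\[
\theta^2(\smax-\alpha)^2\;\leq\;4(\theta-\alpha)(2\smin-\theta-\alpha),
\]
which mixes $\smax$ on the left with $\smin$ on the right. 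For the paper's own example $\smin=1$, $\smax=4$, $\theta^*=1$ (Appendix~\ref{appsec:lack}) this reads $(4-\alpha)^2\leq 4(1-\alpha)^2$, which admits no $\alpha>0$ whatsoever---so your optimization certainly does not ``produce the two branches of~\eqref{eq:value_of_alpha}.''

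What the paper actually needs, and what you are missing, is the \emph{pointwise} condition~\eqref{eq:perturbative_condition_II}: $\theta^2(\sigma(x)-\alpha)^2\leq 4(\theta-\alpha)\bigl(2\sigma(x)-\theta-\alpha\bigr)$ for every $x$, whose binding constraint (at $\sigma(x)=\smax$) keeps $\smax$ on \emph{both} sides. This is obtained by the weighted splitting in~\eqref{eq:perturbation_for_app}: one factors $(\sigma(x)-\alpha)=\sqrt{2\sigma(x)-\theta-\alpha}\cdot\frac{\sigma(x)-\alpha}{\sqrt{2\sigma(x)-\theta-\alpha}}$ \emph{inside} the integral before applying Cauchy--Schwarz, so that after Young's inequality the term $\frac{1}{2\pi}\int(2\sigma(x)-\theta-\alpha)v^2\,dx$ emerges with its full $x$-dependence and cancels exactly against the dissipative piece in~\eqref{E-diff-inequ}. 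Only the $\tilde u$-factor is then estimated via the sup of the weight plus Poincar\'e. Replacing $\sigma(x)$ by $\smin$ in the $v^2$-integral before this step destroys the cancellation and is precisely why your route fails.
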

\noindent
Part (a) of this theorem will be proved in \S\ref{subsec:entropy-evol}, and Part (b) in \S\ref{sec:sigma_general}. {In many of the proofs which will eventually lead to the proof of this theorem we will assume that $(u,v)$ is a classical solution, pertaining to $u_0,\; v_0$ in the periodic Sobolev space $H^1(\T)$. The general result will follow by a simple density argument.}

\begin{remark}\label{rem:limit_of_parameters}
It is simple to see that if $\sigma(x)$ satisfies the conditions of \eqref{item:convergence_sigma_not_constant}, then, as $\smin$ and $\smax$ approach a positive constant $\sigma\not=2$, we find that
$$
  \theta^\ast \to \min\pa{\sigma,\frac{4}{\sigma}},\quad \text{and}\quad \alpha^\ast \to \begin{cases} \sigma-\sqrt{\sigma^2-4}, & \sigma >2 \\ \sigma, & \sigma<2\end{cases}\ ,
$$
recovering the results of part \eqref{item:convergence_sigma_constant} of the above theorem.\\
In addition, one should note that when $\smin > \frac{4}{\smax} $ we have that 
$$\alpha^\ast\pa{\smin,\smax}=2\mu\pa{\smax},$$
where $\mu\pa{\sigma}$ was defined in part \eqref{item:convergence_sigma_constant} of the Theorem. This validates the intuition that, if $\smax$ is ``large enough'', the convergence rate of the solution can be estimated using the ``worst convergence rate'', corresponding to $\mu\pa{\smax}$ of the $\sigma(x)=\sigma$ case.\\
Lastly, one notices that when $\smin=\frac{4}{\smax}$
$$\frac{\smin\pa{4+2\sqrt{4-\smin^2}-\smin\smax}}{4+2\sqrt{4-\smin^2}-\smin^2} =\smax - \sqrt{\smax^2-4},$$
which shows the continuity of $\alpha^{\ast}$ on the curve that stitches the two formulas in \eqref{eq:value_of_alpha}.
\end{remark}
%%%%%%%%%%%%%%%%%%%%%%%%%%%%%%%%%%%%%%%%%%%%%%%%%%%%%%%%%%%%

\section{Preliminaries}\label{sec:pre}
In this short section we will remind the reader of a few simple properties of functions on the torus, as well as explore properties of the anti-derivative function, $\partial_x^{-1} f$, and our functional $E_{\theta}(f,g)$. Most of the simple proofs of this section will be deferred to Appendix \ref{appsec:proofs}.

We begin with the well known Poincar\'e inequality:
\begin{lemma}[Poincar\'e Inequality]\label{thm:poincare}
Let $f\in H^1_{per}\pa{\T}$ with $f_{\mathrm{avg}}=0$. Then
\begin{equation}\label{eq:poincare}
\norm{f}\leq \norm{f^\prime}.
\end{equation}
\end{lemma}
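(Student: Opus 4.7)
The plan is to prove this via Fourier series on the torus, which is the cleanest route given that the inner product on $L^2(\T)$ defined earlier in the paper is already normalized by $\frac{1}{2\pi}$ so that the exponentials $\{e^{ikx}\}_{k\in\Z}$ form an orthonormal basis. First I would expand $f\in H^1_{per}(\T)$ as $f(x)=\sum_{k\in\Z} c_k e^{ikx}$ with $c_k=\inner{f,e^{ikx}}$. The hypothesis $f_{\mathrm{avg}}=0$ translates directly into $c_0=0$, so the expansion in fact runs over $k\in\Z\setminus\{0\}$.

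Next I would invoke Parseval's identity, which with the normalization above reads $\norm{f}^2=\sum_{k\in\Z}\abs{c_k}^2$. Since $f\in H^1_{per}(\T)$, termwise differentiation yields $f'(x)=\sum_{k\in\Z}ik\,c_k e^{ikx}$ in the $L^2$ sense, and Parseval gives $\norm{f'}^2=\sum_{k\in\Z}k^2\abs{c_k}^2$. Combining this with $c_0=0$:
\begin{equation*}
\norm{f}^2=\sum_{k\neq 0}\abs{c_k}^2\le\sum_{k\neq 0}k^2\abs{c_k}^2=\norm{f'}^2,
\end{equation*}
where we used the trivial bound $1\le k^2$ for all nonzero integers $k$. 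Taking square roots gives the claim.

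There is essentially no obstacle here; the only mild point to verify is that the $L^2$-normalization on $\T$ used throughout this paper (with the $\frac{1}{2\pi}$ factor) yields Parseval without any extra constants, so the sharp constant $1$ comes out immediately and the inequality is saturated precisely by $f(x)=c_1 e^{ix}+c_{-1}e^{-ix}$, i.e.\ by the first harmonic. If one prefers to avoid Fourier series, an alternative is to note that the operator $-\partial_x^2$ with periodic boundary conditions, restricted to the mean-zero subspace, has smallest eigenvalue $1$ with eigenfunctions $e^{\pm ix}$; the spectral theorem then gives $\inner{-\partial_x^2 f,f}\ge\inner{f,f}$ on this subspace, which is exactly \eqref{eq:poincare} after an integration by parts justified by the periodicity.
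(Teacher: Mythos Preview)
Your proof is correct and follows essentially the same approach as the paper's own proof: both expand $f$ in its Fourier series, use that $f_{\mathrm{avg}}=0$ kills the zero mode, and then apply Parseval/Plancherel together with the trivial bound $1\le k^2$ for $k\ne 0$. The only difference is cosmetic---you write the coefficients as $c_k$ while the paper writes $\widehat f(k)$---and your additional remarks on sharpness and the spectral alternative go slightly beyond what the paper records.
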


Next we focus our attention on some simple, yet crucial, properties of the anti-derivative function which was defined in \eqref{eq:antiderivative}.
\begin{lemma}\label{lem:properties_of_antiderivative}
Let $f\in L^1\pa{\T}$. Then:
\begin{enumerate}[i)]
\item\label{item:avg} $\pa{\partial^{-1}_x f}_{\mathrm{avg}}=0$.
\item\label{item:diff} $\partial_x^{-1}f$ is differentiable a.e.\ on $[0,2\pi]$ and $\partial_x\pa{\partial^{-1}_x f}(x)=f(x)$ a.e.
\item\label{item:anti_of_deri} If in addition $f$ is differentiable we have that $\partial^{-1}_x\pa{\partial_x f}(x)=f(x)-f_{\mathrm{avg}}$.
\item\label{item:cont_and_fourier_coefficients} If, in addition, we have that $f_{\mathrm{avg}}=0$, then $\partial_x^{-1}f$ is a continuous function on the torus, and
\begin{equation}\label{eq:fourier_coefficients_of_antiderivative}
\widehat{\partial^{-1}_x f}\pa{k}=
= \left\{
\begin{array}{ll}
\frac{\widehat{f}(k)}{ik}, & \: k\ne 0 \\
0, &  \: k=0 \\
\end{array}
\right. \ .
\end{equation}
%where we have used the convention $0\cdot\infty=0$. 
\end{enumerate}
\end{lemma}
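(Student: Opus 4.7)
The statement is a routine collection of facts about the antiderivative operator on the torus, and I would organise the plan by treating parts (i)--(iii) together via the Lebesgue differentiation theorem and the fundamental theorem of calculus, and then handle (iv) separately using periodicity and Fourier analysis. None of the steps should involve any real obstacle; the only subtlety worth highlighting is in part (iv), where one must check that the zero-average assumption on $f$ is precisely what makes the antiderivative genuinely periodic (so that its Fourier coefficients are well-defined in the usual sense).

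For part (i), I would simply unfold the definition: writing $F(x):=\int_0^x f(y)\,dy$, by construction $\partial_x^{-1}f = F - F_{\mathrm{avg}}$, whose average is $F_{\mathrm{avg}}-F_{\mathrm{avg}}=0$. For part (ii), since $f\in L^1(\T)$, $F$ is absolutely continuous on $[0,2\pi]$, hence differentiable almost everywhere with $F'(x)=f(x)$ by the Lebesgue differentiation theorem; subtracting the constant $F_{\mathrm{avg}}$ does not alter this. For part (iii), assuming $f$ differentiable (and applying the fundamental theorem of calculus, which one may assume in the absolutely continuous sense needed),
\begin{equation*}
\int_0^x f'(y)\,dy=f(x)-f(0),
\end{equation*}
whose spatial average is $f_{\mathrm{avg}}-f(0)$, so that subtracting the two yields $f(x)-f_{\mathrm{avg}}$, as claimed.

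For part (iv), the extra assumption $f_{\mathrm{avg}}=0$ ensures $F(2\pi)=\int_0^{2\pi}f(y)\,dy=0=F(0)$, so $F$, and hence $\partial_x^{-1}f = F-F_{\mathrm{avg}}$, extends to a continuous function on $\T$. For the Fourier coefficients, setting $g:=\partial_x^{-1}f$, we already know $\widehat{g}(0)=g_{\mathrm{avg}}=0$ from part (i). For $k\ne 0$, using the periodicity of $g$ and integration by parts (justified since $g$ is absolutely continuous with $g'=f$ in $L^1$),
\begin{equation*}
\widehat{f}(k)=\widehat{g'}(k)=\frac{1}{2\pi}\int_0^{2\pi} g'(x)e^{-ikx}\,dx = ik\,\widehat{g}(k),
\end{equation*}
which upon dividing by $ik$ gives the asserted formula. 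This completes the plan; the only mild care required is in justifying the integration by parts via absolute continuity of $g$, which was already established in part (ii) combined with the periodicity observation at the start of (iv).
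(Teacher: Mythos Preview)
Your proof is correct and follows essentially the same route as the paper's: both unfold the definition for (i), invoke the Lebesgue version of the fundamental theorem of calculus for (ii), compute $\int_0^x f'=f(x)-f(0)$ and subtract its average for (iii), and for (iv) check periodicity via $F(2\pi)=2\pi f_{\mathrm{avg}}=0$ and obtain the Fourier relation $\widehat{f}(k)=ik\,\widehat{\partial_x^{-1}f}(k)$ together with $\widehat{\partial_x^{-1}f}(0)=0$ from (i). The only cosmetic difference is that the paper quotes the identity $\widehat{g'}(k)=ik\,\widehat g(k)$ from its earlier Poincar\'e proof, whereas you justify it directly by integration by parts.
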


\begin{remark}\label{rem:integration_by_parts}
{\eqref{item:diff}, \eqref{item:cont_and_fourier_coefficients}, and the fact that $f$ is a function on the torus, imply that if} $f_{\mathrm{avg}}=0$ we are allowed to use integration by parts with $\partial_x^{-1}f(x)$ on this boundaryless manifold without qualms. 
\end{remark}
The last simple lemma in this revolves around our newly defined functional, $E_{\theta}$.

\begin{lemma}\label{lem:entropy_equivalence}
Let $f,g\in L^2\pa{\T}$ be such that $f_{\mathrm{avg}}=0$ and let $\theta\in \R$ be given. Then the entropy $E_\theta(f,g)$, defined in \eqref{eq:def_entropy}, satisfies
\begin{equation}\label{eq:entropy_equivalence_always}
E_\theta\pa{f,g}\leq \pa{1+\frac{\abs{\theta}}{2}}\pa{\norm{f}^2+\norm{g}^2}\ .
\end{equation}
If in addition $\abs{\theta}<2$ we have that 
\begin{equation}\label{eq:entropy_equivalence_restricted}
E_\theta\pa{f,g}\geq \pa{1-\frac{\abs{\theta}}{2}}\pa{\norm{f}^2+\norm{g}^2}\ .
\end{equation}
In particular, if $0\leq \theta<2$ we have that 
\begin{equation}\label{eq:entropy_equivalence}
\pa{1-\frac{\theta}{2}}\pa{\norm{f}^2+\norm{g}^2}\leq E_\theta\pa{f,g}\leq \pa{1+\frac{\theta}{2}}\pa{\norm{f}^2+\norm{g}^2}\ .
\end{equation}
\end{lemma}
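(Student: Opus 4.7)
The plan is to control the cross term in $E_\theta(f,g)$ by the symmetric quantity $\|f\|^2+\|g\|^2$, using the Poincaré inequality on the anti-derivative together with Cauchy--Schwarz. Write
\[
E_\theta(f,g) = \|f\|^2 + \|g\|^2 - \theta\,\Re\bigl\langle \partial_x^{-1}f,\,g\bigr\rangle,
\]
so the whole content of the lemma reduces to a sharp bound on $|\langle \partial_x^{-1}f,g\rangle|$.

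First, I would apply Cauchy--Schwarz in $L^2(\T)$ to get
\[
\bigl|\langle \partial_x^{-1}f,g\rangle\bigr| \leq \|\partial_x^{-1}f\|\,\|g\|.
\]
Next, by Lemma~\ref{lem:properties_of_antiderivative}\eqref{item:avg}--\eqref{item:diff}, $\partial_x^{-1}f$ has zero average and its weak derivative equals $f$. (Here the hypothesis $f_{\mathrm{avg}}=0$ is not needed at this stage; what matters is that $\partial_x^{-1}f$ is in $H^1_{per}(\T)$ with vanishing mean, which follows for any $f\in L^2(\T)$.) Then Lemma~\ref{thm:poincare} yields
\[
\|\partial_x^{-1}f\| \leq \|(\partial_x^{-1}f)'\| = \|f\|.
\]
Combining with the elementary inequality $\|f\|\,\|g\| \leq \tfrac12(\|f\|^2 + \|g\|^2)$ gives
\[
\bigl|\theta\,\Re\langle \partial_x^{-1}f,g\rangle\bigr| \leq \frac{|\theta|}{2}\bigl(\|f\|^2+\|g\|^2\bigr).
\]

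From this single estimate, the upper bound \eqref{eq:entropy_equivalence_always} follows by writing $E_\theta(f,g) \leq \|f\|^2+\|g\|^2 + |\theta\,\Re\langle \partial_x^{-1}f,g\rangle|$, and the lower bound \eqref{eq:entropy_equivalence_restricted}, valid when $|\theta|<2$, follows by writing $E_\theta(f,g) \geq \|f\|^2+\|g\|^2 - |\theta\,\Re\langle \partial_x^{-1}f,g\rangle| \geq (1-|\theta|/2)(\|f\|^2+\|g\|^2)$. The two-sided bound \eqref{eq:entropy_equivalence} for $0\leq\theta<2$ is then immediate by dropping the absolute values.

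I do not anticipate a real obstacle: the proof is essentially a one-line Cauchy--Schwarz plus Poincaré estimate, and the hypothesis $f_{\mathrm{avg}}=0$ is used only implicitly (it is the standing assumption that makes $\partial_x^{-1}f$ behave well on the torus, cf.\ Remark~\ref{rem:integration_by_parts}). The only mild subtlety is that the proof uses no cancellation between the two terms of $E_\theta$, which is why the resulting constants $1\pm|\theta|/2$ are the natural ones and why the threshold $|\theta|<2$ appears exactly where positivity of $E_\theta$ can fail.
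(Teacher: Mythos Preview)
Your proof is correct and follows essentially the same route as the paper's: bound the cross term via Cauchy--Schwarz, apply the Poincar\'e inequality to $\partial_x^{-1}f$ (which has zero average), and finish with $ab\le\tfrac12(a^2+b^2)$. One small slip in your commentary: the claim that $\partial_x^{-1}f\in H^1_{per}(\T)$ ``for any $f\in L^2(\T)$'' is not quite right---periodicity of $\partial_x^{-1}f$ on $\T$ requires $f_{\mathrm{avg}}=0$ (cf.\ Lemma~\ref{lem:properties_of_antiderivative}\eqref{item:cont_and_fourier_coefficients}), so that hypothesis is used precisely to make the periodic Poincar\'e inequality applicable, not merely ``implicitly''.
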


Lastly, we shall prove the following theorem, which (finally) brings the system \eqref{eq:gt_recast} into play, and on which we will rely on frequently in our future estimation.

\begin{prop}\label{thm:evolution_of_entropy}
Let $u,v\in C([0,\infty);L^2\pa{\T})$ be (real valued) mild solutions to \eqref{eq:gt_recast} with initial datum $u_0,\; v_0\in L^2\pa{\T}$. Then for any $\theta\in\R$
\begin{equation}\label{eq::evolution_of_entropy}
\begin{gathered}
\frac{d}{dt}E_{\theta}\pa{u(t)-u_{\mathrm{avg}},v(t)}=-\theta \norm{u(t)-u_{\mathrm{avg}}}^2 + \frac{1}{2\pi} \int_0^{2\pi}(\theta-2\sigma(x))v(x,t)^2 dx\\
+\frac{\theta}{2\pi}\int_0^{2\pi}\sigma(x) \partial_x^{-1}  \pa{u(x,t)-u_{\mathrm{avg}}}  v(x,t)dx-\theta \pa{v(t)_{\mathrm{avg}}}^2\ ,
\end{gathered}
\end{equation}
where
\begin{equation}\label{eq:u_avg}
u_{\mathrm{avg}}=\frac{1}{2\pi}\int_{0}^{2\pi}u_0(x)dx=\frac{1}{2\pi}\int_{0}^{2\pi}u(x,t)dx,\quad \forall t>0.
\end{equation}
\end{prop}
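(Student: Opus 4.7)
The plan is to reduce to the classical case $u_0, v_0 \in H^1(\T)$ as stated after Theorem~\ref{thm:main}, perform the calculation term by term, and then extend to general $L^2$ data by density. Set $\tilde u(t) := u(t) - u_{\mathrm{avg}}$; mass conservation \eqref{eq:u_avg} gives $\tilde u(t)_{\mathrm{avg}} = 0$ for all $t \geq 0$, so Lemma~\ref{lem:properties_of_antiderivative} and Remark~\ref{rem:integration_by_parts} apply to $\tilde u(t)$. Since $u$ and $v$ are real, the $\Re$ in \eqref{eq:def_entropy} can be dropped throughout.

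First I would handle the ``straight'' part $\|\tilde u\|^2+\|v\|^2$. Substituting $\partial_t\tilde u=-\partial_x v$ and $\partial_t v=-\partial_x \tilde u-\sigma v$ from \eqref{eq:gt_recast} into $2\langle\partial_t\tilde u,\tilde u\rangle+2\langle\partial_t v,v\rangle$, periodic integration by parts makes the transport contributions $\pm 2\langle v,\partial_x\tilde u\rangle$ cancel, producing
\[
  \frac{d}{dt}\bigl(\|\tilde u(t)\|^2+\|v(t)\|^2\bigr) \;=\; -\frac{1}{\pi}\int_0^{2\pi}\sigma(x)\,v(x,t)^2\,dx.
\]

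Next I would differentiate the mixed term $-\tfrac{\theta}{2\pi}\int_0^{2\pi}\partial_x^{-1}\tilde u(x,t)\,v(x,t)\,dx$ by splitting $\partial_t(\partial_x^{-1}\tilde u\cdot v)$ into two pieces. For the first piece I would commute $\partial_t$ with the time-independent operator $\partial_x^{-1}$, so that $\partial_t(\partial_x^{-1}\tilde u)=\partial_x^{-1}(\partial_t\tilde u)=-\partial_x^{-1}(\partial_x v)$; item~\eqref{item:anti_of_deri} of Lemma~\ref{lem:properties_of_antiderivative} rewrites this as $-(v-v_{\mathrm{avg}})$, and inserting this into $-\tfrac{\theta}{2\pi}\int(\cdot)\,v\,dx$ produces $+\tfrac{\theta}{2\pi}\int v^2\,dx$ together with the characteristic correction $-\theta\,v_{\mathrm{avg}}^2$. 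For the second piece I would substitute $\partial_t v=-\partial_x\tilde u-\sigma v$, integrate by parts once against $\partial_x^{-1}\tilde u$, and use $\partial_x\partial_x^{-1}\tilde u=\tilde u$ from item~\eqref{item:diff}: the transport contribution converts into $-\theta\|\tilde u\|^2$, while the $\sigma v$ contribution yields precisely the $\sigma$-weighted cross-integral in \eqref{eq::evolution_of_entropy}. Summing the three blocks then reproduces \eqref{eq::evolution_of_entropy} exactly.

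The main obstacle is bookkeeping rather than a serious analytic difficulty: one must stay attentive to the fact that, although $\tilde u$ has zero spatial mean by conservation, the flux $v(t)$ in general does not, and this asymmetry is precisely what forces the appearance of the $-\theta\,v_{\mathrm{avg}}^2$ correction through the identity $\partial_x^{-1}(\partial_x v)=v-v_{\mathrm{avg}}$ instead of just $v$. The justification of the commutation $\partial_t\partial_x^{-1}=\partial_x^{-1}\partial_t$ and of all the integrations by parts is classical in the $H^1$ setting; the extension of the resulting identity to general $L^2$ data follows from the boundedness of $\partial_x^{-1}$ on mean-zero $L^2$ and the strong $L^2$-continuity of the semigroup generated by \eqref{eq:gt_recast}, so that both sides of the integrated version of \eqref{eq::evolution_of_entropy} depend continuously on $(u_0,v_0)$.
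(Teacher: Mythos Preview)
Your proof is correct and follows essentially the same route as the paper: reduce to zero-mean $u$, differentiate $\|\tilde u\|^2+\|v\|^2$ and cancel the transport terms by periodic integration by parts, then differentiate the mixed term using the product rule together with Lemma~\ref{lem:properties_of_antiderivative}\eqref{item:diff},\eqref{item:anti_of_deri} to produce the $-\theta\|\tilde u\|^2$, $\theta\|v\|^2$, $-\theta v_{\mathrm{avg}}^2$, and $\sigma$-weighted cross terms. Your explicit mention of the density extension to general $L^2$ data is a welcome addition that the paper only alludes to after Theorem~\ref{thm:main}.
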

\begin{proof}
We begin by noticing that the validity of \eqref{eq:u_avg} follows immediately from the fact that $u$ is a mild  solution and the conservation of mass property of the system \eqref{eq:gt_recast}. Moreover, one can see that replacing $\pa{u(t),v(t)}$ by $\pa{u(t)-u_{\mathrm{avg}},v(t)}$ yields an equivalent solution (up to a constant shift in the initial data) to the system of equations, with the additional condition that the average of the first component is zero for all $t\geq 0$. With this observation in mind, we can assume without loss of generality that $u_{\mathrm{avg}}=0$.

Using the Goldstein-Taylor equations we see that
$$\frac{d}{dt}\norm{u(t)}^2=2\inner{u,\partial_t u}=-2\inner{u,\partial_x v}.$$
$$\frac{d}{dt}\norm{v(t)}^2=2\inner{v,\partial_t v}=-2\inner{v,\partial_x u+\sigma v}.$$
Since
$$\inner{u,\partial_x v}+\inner{v,\partial_x u}=\frac{1}{2\pi}\int_{0}^{2\pi}\partial_x\pa{uv}(x,t)dx=0\ ,$$
we see that 
\begin{equation}\label{eq:diff_of_norms}
\frac{d}{dt}\pa{\norm{u(t)}^2+\norm{v(t)}^2}=-\frac{1}{\pi}\int_{0}^{2\pi}\sigma(x)v(x,t)^2dx.
\end{equation}
We now turn our attention to the mixed term of $E_\theta(u,v)$:
\begin{eqnarray*}
&&\!\!\!\!\!\!\!\!\!\!\!\!\frac{d}{dt}\frac{\theta}{2\pi}\int_{0}^{2\pi} \ad u (x,t)v(x,t)dx\\
&&\!\!\!\!\!\!\!\!\!\!\!\!=\frac{\theta}{2\pi}\int_{0}^{2\pi} \ad \pa{\partial_t u }(x,t)v(x,t)dx
+\frac{\theta}{2\pi}\int_{0}^{2\pi} \ad u (x,t)\partial_t v(x,t)dx\\
&&\!\!\!\!\!\!\!\!\!\!\!\!=-\frac{\theta}{2\pi}\int_{0}^{2\pi} \ad \pa{\partial_x v }(x,t)v(x,t)dx
-\frac{\theta}{2\pi}\int_{0}^{2\pi} \ad u (x,t)[\partial_x u(x,t)+\sigma(x)v(x,t)]dx.
\end{eqnarray*}
Using points \eqref{item:diff} and \eqref{item:anti_of_deri} of Lemma \ref{lem:properties_of_antiderivative}, together with Remark \ref{rem:integration_by_parts}, we find that the above equals
$$-\frac{\theta}{2\pi}\int_{0}^{2\pi} \pa{v(x,t)-v(t)_{\mathrm{avg}}}v(x,t) dx + \frac{\theta}{2\pi}\int_{0}^{2\pi} u(x,t)^2 dx$$
$$-\frac{\theta}{2\pi}\int_{0}^{2\pi} \sigma(x) \ad u(x,t)v(x,t)dx.$$
Subtracting this from \eqref{eq:diff_of_norms} (as there is a minus in definition \eqref{eq:def_entropy}) yields \eqref{eq::evolution_of_entropy}.
\end{proof}

%%%%%%%%%%%%%%%%%%%%%%%%%%%%%%%%%%%%%%%%%%%%%%%%%%%%%%%%%%%%%%%%%%%%%%555

\section{Constant relaxation function}\label{sec:sigma_constant}

In recent years, the investigation of the Goldstein-Taylor model on $\T$ with constant relaxation function $\sigma$ was frequently tackled with a modal decomposition in the Fourier space w.r.t.\ $x$. This approach allows for an extension to other discrete velocity models and even some continuous velocities models \cite{AAC}, but is not suitable for the non-homogeneous case. \\
Before beginning with our investigation we review a few recent results:\\ 
In \cite[\S1.4]{DMS} exponential convergence to equilibrium was shown, but without the sharp rate. In \cite[\S4.1]{AAC} a hypocoercive decay estimate of the form
$$
  \Big\|f(t)-\binom{f_\infty}{f_\infty}\Big\|_{L^2} \le c\,e^{-\mu t} \Big\|f_0-\binom{f_\infty}{f_\infty}\Big\|_{L^2}\ ,
$$
with the vector notation from \eqref{vector-not} %$f(t):=(f_+(t),f_-(t))^T$ 
and the sharp rate 
$$
  \mu(\sigma)= \left\{
\begin{array}{ll}
\sigma, & \:0 < \sigma <2 \\
\frac{\sigma}{2}-\sqrt{\frac{\sigma^2}{4}-1}, &  \:\sigma >2 \\
\end{array}
\right. 
$$
was obtained (see also Fig. \ref{fig:sigma} below). A further study on the minimal constant $c$ in the above was provided in \cite[Th. 1.1]{AAS}.

With these results in mind, we turn our attention to the following (recast) Goldstein-Taylor equation with a constant relaxation rate:
\begin{equation}\label{eq:gt_constant}
\begin{gathered}
\partial_t u(x,t) = -\partial_x v(x,t),\\
\partial_t v(x,t) = -\partial_x u(x,t) - \sigma v(x,t) \ .
\end{gathered}
\end{equation}
In order to be able to discover our entropy functional, we shall consider the straightforward modal analysis in detail. This will allow us to obtain not only explicit decay rates for each Fourier mode, but also an ``optimal Lyapunov functional'' for such given mode, with which we will then be able to construct a non-modal entropy functional in terms of a pseudo-differential operator as defined in \eqref{eq:def_entropy}.\\
As was mentioned in \S\ref{subsec:main}, this 
%case will give rise to our entropy functional, and 
will give us intuition to the large time behaviour of the equation in several cases even when $\sigma(x)$ is not constant.

\subsection{Fourier analysis and the spectral gap}\label{subsec:Fourier}
One natural way to understand the large time behaviour of \eqref{eq:gt_constant} relies on a simple Fourier analysis \textit{together with} a hypocoercivity technique that was developed by Arnold and Erb in \cite{Arnold2014}. We begin with the former, and focus on the latter from the next subsection onwards. \\
 Using the Fourier transform on the torus (i.e. in the spatial variable), we see that \eqref{eq:gt_constant} is equivalent to infinity many decoupled ODE systems:
 \begin{equation}\label{ODEsystem}
\frac{d}{dt}
\begin{pmatrix}
\widehat{u}(k)\\
\widehat{v}(k)
\end{pmatrix} =  -
\begin{pmatrix}
0& ik\\
ik& \sigma 
\end{pmatrix}\begin{pmatrix}
\widehat{u}(k)\\
\widehat{v}(k)
\end{pmatrix}:=-\C_k \begin{pmatrix}
\widehat{u}(k)\\
\widehat{v}(k)
\end{pmatrix} ,\qquad k\in\Z. 
 \end{equation}
%At this point, one can analyse the ODE system, which corresponds to a node by node investigation, and ``reconstruct the solution together'' by considering all the nodes together. \\
%One can readily show that 
The eigenvalues of the matrices $\C_k\in \CC^{2\times2}$ are given by
$$\lambda_{\pm,k}:= \frac{\sigma}{2} \pm \sqrt{\frac{\sigma^2}{4}-k^2},\quad k\in\Z,$$
and as such:
\begin{itemize}
\item \textit{Invariant space:} For $k=0$ we find that $\lambda_{-,0}=0$ and $\lambda_{+,0}=\sigma$. In fact, as
\begin{equation}\label{C0}
\C_0=\begin{pmatrix}
0& 0\\
0& \sigma 
\end{pmatrix}
\end{equation} 
we can conclude immediately that $\widehat{u}(0,t)=\widehat{u_0}(0)$ and $\widehat{v}(0,t)=\widehat{v_0}(0)e^{-\sigma t}$, corresponding to the mass conservation of the original equation and the rapid decay of the difference between the masses of $f_-$ and $f_+$.
\item \textit{Case I:} For $0<|k|\leq \lfloor\frac{\sigma}{2} \rfloor$, with $\frac{\sigma}{2}\not\in \N$, one finds two real eigenvalues, whose minimum is
$$\lambda_{-,k}=\frac{\sigma}{2} - \sqrt{\frac{\sigma^2}{4}-k^2}=\frac{2k^2}{\sigma + \sqrt{\sigma^2-4k^2}}\ ,$$
i.e. the large time behaviour of $\widehat{u}(k)$ and $\widehat{v}(k)$ is controlled by $e^{-\pa{\frac{\sigma}{2} - \sqrt{\frac{\sigma^2}{4}-k^2}}t}$.
\item \textit{Case II:} For $0<k=\frac{\sigma}{2}\in \N$ the two eigenvalues coincide and are equal to $\frac{\sigma}{2}$. 
%As $\C_k$ is not a multiple of the identity, we conclude that the
Hence, that eigenvalue is defective (i.e. corresponds to a Jordan block of size 2) and the large time behaviour of $\widehat{u}(k)$ and $\widehat{v}(k)$ is controlled by $\pa{1+t}e^{-\frac{\sigma}{2}t}$.
\item \textit{Case III:} For $|k|> \lfloor\frac{\sigma}{2} \rfloor$, one finds two complex conjugate eigenvalues, whose real part equals $\frac{\sigma}{2}$. Thus the large time behaviour of $\widehat{u}(k)$ and $\widehat{v}(k)$ is controlled by $e^{-\frac{\sigma}{2}t}$.
\end{itemize}
 \begin{figure}[ht]
	\centering
	\includegraphics[scale=0.7]{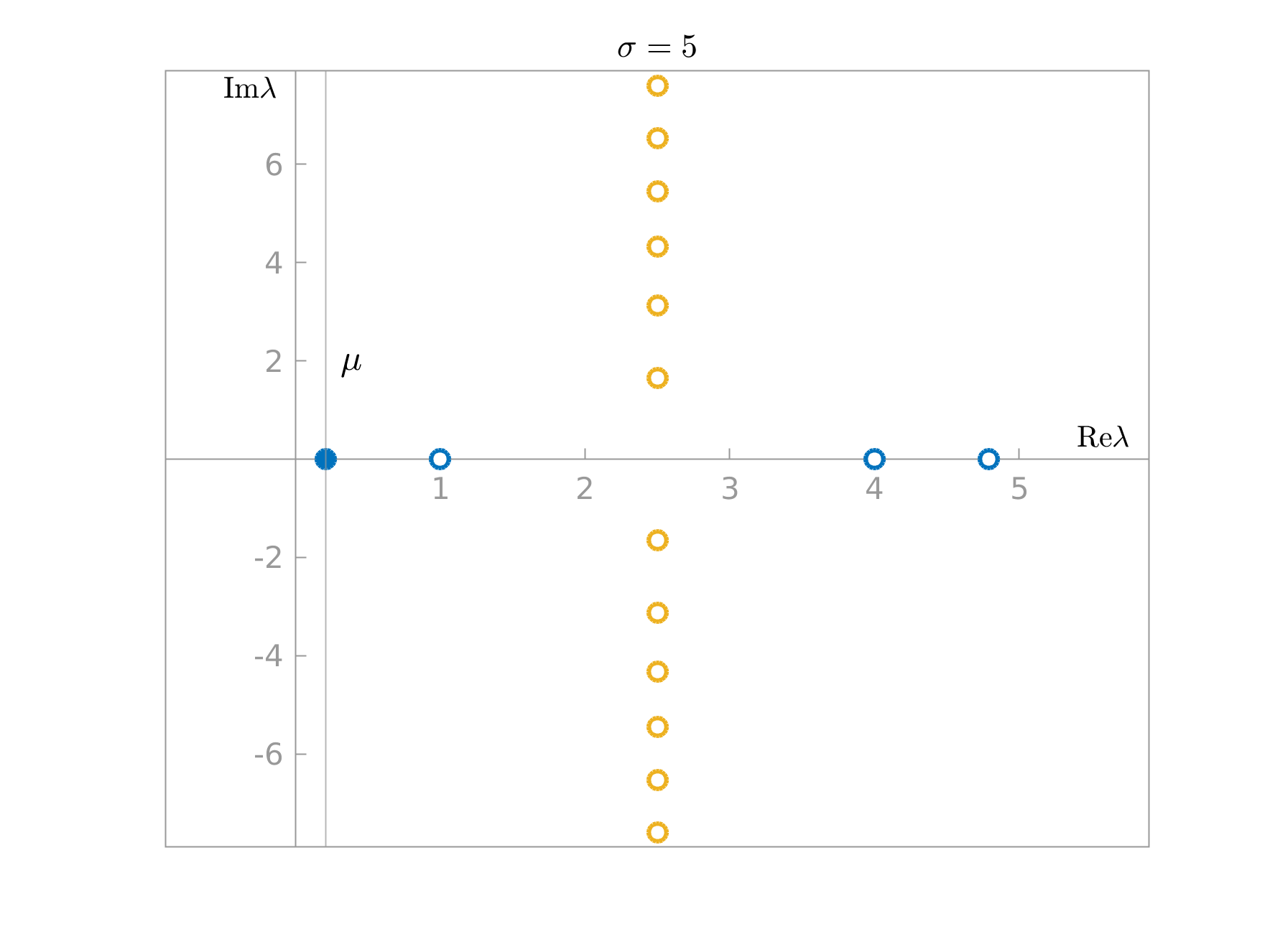}
	\caption{The eigenvalues $\lambda_{\pm,k}$ of $\C_k$, $|k|\in\N$ for $\sigma=5$. The spectral gap is $\mu=(5-\sqrt{21})/2$.}
	\label{fig:eigenvalues_s6}
\end{figure}
From the observations above, we notice that as long as we subtract $\widehat{u}(0)$, i.e. as long as we remove the initial total mass from the original solution, all the modes converge \textit{exponentially} to zero. Their rates have a sharp, and uniform-in-$k$ lower bound that depends on $\sigma$. This spectral gap of  \eqref{eq:gt_constant} will be denoted by $\mu\pa{\sigma}$.\\
Case I, i.e.\ $0<|k|< \lfloor\frac{\sigma}{2} \rfloor$, is the most ``difficult case'' as the real part of the eigenvalues depends on $k$. However, one notices that the lower eigenvalue, $\lambda_{-,k}$, increases with $k$, which implies that, if there are $k-$s such that $0<|k|< \lfloor\frac{\sigma}{2} \rfloor$, the slowest possible convergence will be given by $\lambda_{-,\pm 1}$. As we need to compare the decay rates of \textit{all} modes \textit{simultaneously}, we find that it is enough to consider the following possibilities:
\begin{itemize}
\item $0<\sigma<2$: We only have possibilities of Case III, implying that all modes are controlled by $e^{-\frac{\sigma}{2}t}$.
\item $\sigma=2$: We have possibilities of Case III, as well as defectiveness in $k=\pm 1$ (Case II). This means that the modes are controlled by $\pa{1+t}e^{-t}$. If one searches for a \textit{pure exponential control}, the best rate one would find is $e^{-\pa{1-\epsilon}t}$ for any given fixed $\epsilon>0$.
\item $\sigma>2$: We have possibilities from Cases I and III, and potentially Case II. All the modes that correspond to Case I are controlled by $e^{-\pa{\frac{\sigma}{2} - \sqrt{\frac{\sigma^2}{4}-1}}t}$,
% (due to the aforementioned monotonicity of the lower eigenvalue), 
while those that correspond to Case III are controlled by $e^{-\frac{\sigma}{2}t}$. If Case II is realised, i.e. $\frac{\sigma}{2}\in \N\setminus \br{1}$, we find that the modes $k=\pm \frac{\sigma}{2}$ are controlled by $\pa{1+t}e^{-\frac{\sigma}{2}t}$. In total, thus, \textit{all} the modes are controlled by $e^{-\pa{\frac{\sigma}{2} - \sqrt{\frac{\sigma^2}{4}-1}}t}$, a decay rate that is realised on the $k=\pm 1$ modes, and the coefficient in the exponent is the spectral gap of the Goldstein-Taylor system \eqref{eq:gt_constant}.
\end{itemize} \begin{figure}[ht]
	\centering
\includegraphics[scale=0.65]{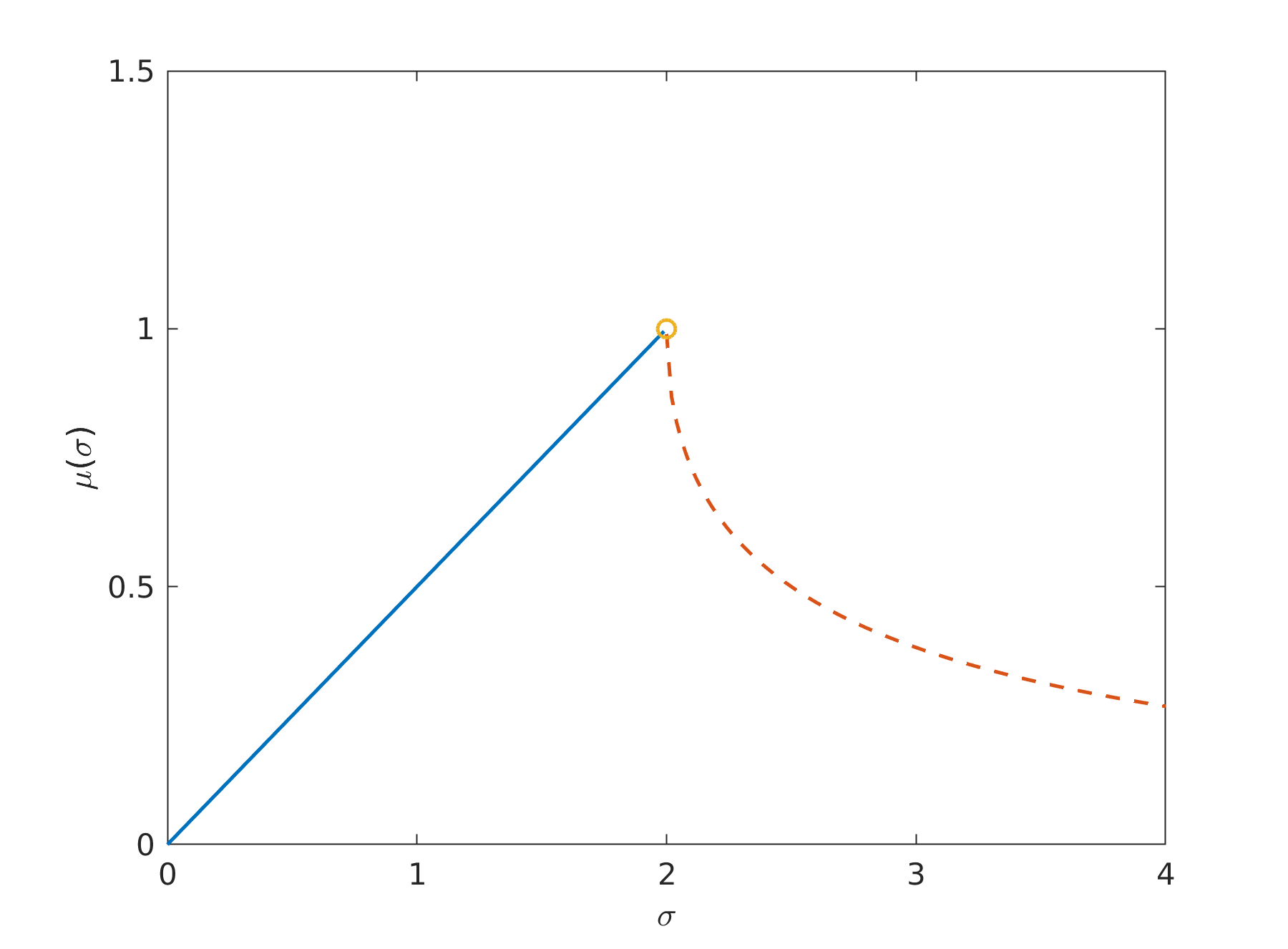}
	\caption{The exponential decay rate, $\mu\pa{\sigma}$, of the solution pair $(u(t)-u_{\mathrm{avg}},\,v(t))$ grows linearly until $\sigma=2$ where the defectiveness appears (hence the circle). From that point onwards the decay rate decreases, and is of order $O\pa{\frac{1}{\sigma}}$.}
	\label{fig:sigma}
\end{figure}
Before we turn our attention to properly consider these cases and ``uncover'' our spatial entropy, we remind the reader of the hypocoercivity technique which will allow us to transform the spectral information of $\C_k$ into a an appropriate, twisted norm with which we will show the desired decay of the $k$-th mode.

\subsection{Hypocoercivity and modal Lyapunov functionals}\label{sec4.2}
In the previous subsection we have concluded that, barring the zero mode, all the Fourier modes of \eqref{ODEsystem} decay exponentially (excluding potentially those with $|k|=\frac{\sigma}{2}$ where a polynomial correction is required). The lack of positive definiteness of the governing matrix, $\C_k$, stops us from seeing this behaviour in the Euclidean norm on $\CC^2$. However, by modifying the norm with the help of another, closely related,  positive definite matrix $\PP_k$, one can construct a new Lyaponov functional, which is equivalent to the Euclidean norm, that decays with the expected exponential rate (at least for a non-defective $\C_k$).\\
This is exactly the idea that motivated Arnold and Erb, and which is expressed in the following theorem (see \cite{Arnold2014}, \cite[Lemma 2]{AAC}):
\begin{theorem}\label{thm:anton_erb}
Let the matrix $\C\in\CC^{n\times n}$ be positive stable (i.e.\ have only eigenvalues with positive real parts). Let 
$$\mu=\min\br{\Re \lambda\;|\;\lambda \text{ is an eigenvalue of }\C}.$$
Then: 
\begin{enumerate}[i)]
\item\label{item:no_degen} If all eigenvalues with real part equal to $\mu$ are non-defective, there exists a Hermitian, positive definite matrix $\PP$ such that
\begin{equation}\label{eq:matrix_inequality}
\C^\ast \PP+\PP\C \geq 2\mu \PP.
\end{equation}
\item\label{item:degen} If at least one eigenvalue with real part equal to $\mu$ is defective, then for any $\epsilon>0$, one can find a Hermitian, positive definite matrix $\PP_{\epsilon}$ such that
\begin{equation}\label{eq:matrix_inequality_deg}
\C^\ast \PP_{\epsilon}+\PP_{\epsilon}\C \geq 2\pa{\mu-\epsilon} \PP_{\epsilon}\ ,
\end{equation}
where $\C^\ast$ denotes the Hermitian transpose of $\C$.
\end{enumerate}
\end{theorem}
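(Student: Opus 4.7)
The plan is to reduce to Jordan normal form and then handle each Jordan block independently via a classical diagonal rescaling. First I would write $\C = T J T^{-1}$ with $J$ in Jordan normal form. Since the matrix inequality $\C^* \PP + \PP \C \geq 2\mu' \PP$ is invariant under the congruence $\PP = T^{-*} \tilde\PP T^{-1}$, it suffices to exhibit a Hermitian positive definite $\tilde\PP$ satisfying $J^* \tilde\PP + \tilde\PP J \geq 2\mu' \tilde\PP$, where $\mu' = \mu$ in part (i) and $\mu' = \mu - \epsilon$ in part (ii). Because $J$ is block diagonal, $\tilde\PP$ may be chosen block diagonal as well, so the problem reduces to a single Jordan block $J_k = \lambda_k I + N_k$ of size $m_k$, with $N_k$ the standard nilpotent superdiagonal shift.

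For each such block I would set $D_\delta := \mathrm{diag}(1, \delta, \delta^2, \ldots, \delta^{m_k - 1})$ for a parameter $\delta > 0$ and take $\tilde\PP_k := D_\delta^{-2}$. A direct computation, exploiting $D_\delta^{-1} N_k D_\delta = \delta N_k$, gives
\[
  J_k^* \tilde\PP_k + \tilde\PP_k J_k \;=\; D_\delta^{-1}\bigl(2\Re(\lambda_k)\, I + \delta(N_k + N_k^*)\bigr) D_\delta^{-1},
\]
so that the desired matrix inequality is equivalent (after conjugating out $D_\delta^{-1}$) to the sign condition
\[
  2\bigl(\Re(\lambda_k) - \mu'\bigr)\, I + \delta(N_k + N_k^*) \;\geq\; 0.
\]
Since $\norm{N_k + N_k^*} \leq 2$, this holds whenever $\delta \leq \Re(\lambda_k) - \mu'$.

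The remaining step is to distinguish cases according to the spectral location of $\lambda_k$. In part (ii), $\mu' = \mu - \epsilon$ so $\Re(\lambda_k) - \mu' \geq \epsilon$ for every block, and one may simply take $\delta = \epsilon$ uniformly across all blocks. In part (i), $\mu' = \mu$: for blocks with $\Re(\lambda_k) > \mu$ any sufficiently small $\delta$ works, while for blocks with $\Re(\lambda_k) = \mu$ the non-defectiveness hypothesis forces $m_k = 1$, hence $N_k = 0$, and the sign condition collapses to the trivial $0 \geq 0$, making the value of $\delta$ irrelevant. Assembling the block pieces into $\tilde\PP = \mathrm{diag}(\tilde\PP_1, \ldots, \tilde\PP_r)$ and pulling back to $\PP := T^{-*} \tilde\PP T^{-1}$ (which is automatically Hermitian positive definite since $T$ is invertible) concludes the proof.

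The main obstacle is conceptual rather than technical: one must recognize that the rescaling $D_\delta$ deforms a defective Jordan block toward a normal one, so that the (indefinite) off-diagonal contribution from $N_k + N_k^*$ becomes negligible compared to the (positive) diagonal contribution $2(\Re(\lambda_k) - \mu')\,I$. Once this trick is in place the calculations are mere bookkeeping; the only place the non-defectiveness hypothesis of part (i) enters is in eliminating the troublesome $N_k$ for blocks where $\Re(\lambda_k)$ exactly equals $\mu$, which is precisely what part (ii) pays for by shifting the target from $\mu$ to $\mu - \epsilon$.
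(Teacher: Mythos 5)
Your proof is correct and is essentially the same argument that underlies the construction the paper cites from Arnold--Erb: passing to the Jordan basis of $\C$ (equivalently, the generalized eigenbasis of $\C^\ast$) turns the matrix inequality into a block-diagonal problem, and your choice $\tilde\PP = \mathrm{diag}(D_{\delta}^{-2})$ pulled back as $\PP = T^{-\ast}\tilde\PP T^{-1}$ is precisely the formula $\PP = \sum_i b_i\, \omega_i\otimes\omega_i^{\ast}$ with weights $b_i$ geometrically graded within each Jordan chain. Your diagonal rescaling $D_\delta^{-1} N_k D_\delta = \delta N_k$ is the standard way to make the nilpotent contribution as small as desired, which is exactly where the $\epsilon$-loss in part (ii) comes from and why non-defectiveness of the critical eigenvalues removes it in part (i); all the steps check out.
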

We remark that the matrices $\PP$ and $\PP_\epsilon$ are never unique. \\
One can utilise the theorem in the following way: Assuming the eigenvalues associated to $\C$'s spectral gap, $\mu$, are non-defective, then by defining the norm 
$$\norm{y}^2_{\PP}:=\inner{y,\PP y}=y^\ast \PP y,$$ 
one sees that, if $y(t)$ solves the ODE $\dot{y}=-\C y$, then
\begin{equation}\label{Gronwall}
\frac{d}{dt}\norm{y}^2_{\PP}=-\inner{y, \pa{\C^\ast\PP+\PP \C} y} \leq -2 \mu \norm{y}^2_{\PP},
\end{equation}
resulting in the correct decay rate. The same approach works in the second case of Theorem \ref{thm:anton_erb}.\\
Besides the general idea of this methodology, Arnold and Erb have given a recipe (one that was later extended in %the work of Arnold, Jin and W\"ohrer, see 
\cite{AJW} to defective cases, using a time dependent matrix $\PP$) to finding the matrix $\PP,\PP_{\epsilon}$:\\
Assuming that $\C$ is diagonalisable, and letting $\br{\omega_i}_{i=1,\dots,n}$ be the eigenvectors of $\C^\ast$, the matrix  $\PP>0$ can be chosen to be
\begin{equation}\label{eq:form_of_P}
\PP=\sum_{i=1}^n b_i \omega_i \otimes \omega_i^\ast,
\end{equation}
for any positive sequence $\br{b_i}_{i=1,\dots, n}$. The above formula remains true, for \textit{a particular choice of} $\br{b_i}_{i=1,\dots, n}$, in the case where $\C$ is not diagonalisable. In that case we also need to augment the eigenvectors with the generalised eigenvectors. We refer the interested reader to Lemma 4.3 in \cite{Arnold2014}.
%, and would limit ourselves to providing $\PP$ and $\PP_{\epsilon}$ in our setting, and to showing that \eqref{eq:matrix_inequality} is valid. 
Moreover, for $n=2$, the case we shall need below, and $\C$ non-defective, all matrices  $\PP$ satisfying \eqref{eq:matrix_inequality} are indeed of the form \eqref{eq:form_of_P}, see \cite[Lemma 3.1]{AAS}.

We now turn our attention back to the Fourier transformed Goldstein-Taylor system \eqref{ODEsystem} and determine the modal Lyapunov functionals using the above recipe.
A short computation, where the weights $b_1,\,b_2$ are chosen such that both diagonal elements of $\PP$ are 1, finds the following matrices (For Case III we also require $b_1=b_2$, as this minimises the number of the resulting admissible matrices $\PP_k$ satisfying \eqref{eq:matrix_inequality}.):
\begin{itemize}
\item Case I: $0<|k|< \lfloor\frac{\sigma}{2} \rfloor$, with $\frac{\sigma}{2}\not\in \N$.  In this case we have:
\begin{equation}\label{eq:Pksmall}
\PP_k^{(I)}:= \begin{pmatrix}
1& -\frac{2ki}{\sigma}\\
\frac{2ki}{\sigma}& 1
\end{pmatrix},
\end{equation}
\item Case II: $|k|= \lfloor\frac{\sigma}{2}\rfloor\in \N$. As this case fosters defective eigenvalues, we will only consider the case $\sigma=2$ (as was mentioned beforehand), and state the matrix corresponding to $k=\pm 1$ and a given fixed $\epsilon>0$:
\begin{equation}\label{eq:Pkdeg}
\PP^{(II)}_{\epsilon,\pm 1}:= \begin{pmatrix}
1 & \mp\frac{i(2-\epsilon^2)}{2+\epsilon^2}\\
 \pm\frac{i(2-\epsilon^2)}{2+\epsilon^2} & 1
\end{pmatrix}
\end{equation}
\item Case III: $|k|> \lfloor\frac{\sigma}{2} \rfloor$. In this case we have:
\begin{equation}\label{eq:Pklarge}
\PP_k^{(III)}:= \begin{pmatrix}
1& -\frac{i\sigma }{2k}\\
\frac{i\sigma}{2k}& 1
\end{pmatrix}
\end{equation}
\end{itemize}
For each mode $k\ne0$, its {\it modal Lyapunov functional} will be given by $\big\|\binom{\hat u (k,t)}{\hat v (k,t)}\big\|_{\PP_k}^2$, where the matrix $\PP_k$ is chosen according to the above three cases. In Case II, the parameter $\epsilon>0$ can be chosen arbitrarily small.

\subsection{Derivation of the spatial entropy ${E_\theta(u,v)}$}\label{entropy-derivation}
The goal of this subsection is twofold: Finding a modal entropy to our system, and translating it to a spatial entropy that is modal-independent.\\
To begin with we shall define a {\it modal entropy} to quantify the exponential decay of solutions to \eqref{ODEsystem} towards its steady state:
\begin{equation}\label{F-steady}
  \widehat{u_\infty}(k) = \left\{
\begin{array}{ll}
\widehat{u_0}(k=0)=(u_0)_{\mathrm{avg}}, &  \: k=0  \\
0, &  \: k\ne0 \\
\end{array}
\right. \ ;\qquad\quad \widehat{v_\infty}(k)=0\ ,\: k\in\Z\ .
\end{equation}
Since the matrix $\C_0$ from \eqref{C0} has no spectral gap, the mode $k=0$ plays a special role, and hence will be treated separately. \\
Once found, we will want to relate that modal-based entropy to the {\it spatial entropy} $E_\theta$ from Definition \ref{def:entropy}, which is not based on a modal decomposition. To this end we already remark that the off-diagonal factors $ik$ in \eqref{eq:Pksmall} and $1/ik$ in \eqref{eq:Pklarge} correspond in physical space, roughly speaking, to a first derivative and  an anti-derivative, respectively. \\

As in \S\ref{subsec:Fourier} we shall distinguish three cases of $\sigma$:\\
$\bm{0<\sigma<2}:$ All modes $k\ne0$ satisfy $|k|> \lfloor\frac{\sigma}{2} \rfloor$, and hence are of Case III. We recall from \S\ref{subsec:Fourier} that all modes decay here with the sharp rate $\frac{\sigma}{2}$. For a modal entropy to reflect this decay, we hence have to use for each mode a Lyapunov functional $\big\|\binom{\hat u (k,t)}{\hat v (k,t)}\big\|_{\PP_k}^2$, where $\PP_k$ satisfies the inequality \eqref{eq:matrix_inequality} with $\mu=\frac{\sigma}{2}$. $\PP_k=\PP_k^{(III)}$ is the most convenient choice. \\
We define the modal entropy for any $\br{\widehat{u}(k),\widehat{v}(k)}_{k\in\Z}$ such that $\widehat{u}(0)=0$ as
\begin{eqnarray}\label{modalE-sigma<2}
\mathcal{E}\pa{\widehat{u},\widehat{v}}&:=&\sum_{k\in\Z\setminus \br{0}}\norm{\begin{pmatrix}
\widehat{u}(k)\\ \widehat{v}(k)
\end{pmatrix}}^2_{\PP_k^{(III)}}+\norm{\begin{pmatrix}
\widehat{u}(0)\\ \widehat{v}(0)
\end{pmatrix}}^2\\
&=&\sum_{k\in\Z}\pa{\abs{\widehat{u}(k)}^2  -\sigma\Re\pa{\frac{\widehat{u}(k)}{ik}\overline{\widehat{v}(k)}}+\abs{\widehat{v}(k)}^2}\ ,
\end{eqnarray}
where we used the convention $\frac{\widehat{u}(0)}{0}=0$.
The mode $k=0$ was included since $\widehat{u}(0,t)=\widehat{u}(0)=0$ and $\widehat{v}(0,t)=\widehat{v}(0)e^{-\sigma t}$. 
Using Plancherel's equality, and \eqref{item:cont_and_fourier_coefficients} from Lemma \ref{lem:properties_of_antiderivative}, we find that 
\begin{equation}\label{EE=E}
\mathcal{E}\pa{\widehat{u},\widehat{v}}=E_\sigma\pa{u,v},
\end{equation}
which shows why we consider the spatial entropy functional from Definition \ref{def:entropy} in this case.\\
We note that, since $u_{\mathrm{avg}}(t)$ is conserved, part \eqref{item:cont_and_fourier_coefficients} of Lemma \ref{lem:properties_of_antiderivative}, explains why we have chosen to use the anti-derivative of $u$, and not of $v$.\\

\noindent
$\bm{\sigma>2:}$ This situation is more complicated than the previous one, as we have a mixture of at least two of the aforementioned three cases: finitely many $k-$s in $\Z$ for which $0<|k|< \lfloor\frac{\sigma}{2} \rfloor$ (i.e.\ Case I), Case II for two $k-$s if $\frac{\sigma}{2}\in\N$, while the rest satisfy $|k|> \lfloor\frac{\sigma}{2} \rfloor$ (i.e.\ Case III).
Following the above methodology to construct the modal entropy, we would need to use a combination of $\PP_k^{(I)}$ and $\PP_k^{(III)}$, given by \eqref{eq:Pksmall} and \eqref{eq:Pklarge}, and potentially a matrix for the defective modes. This is feasible on the modal level, but does not easily translate back to the spatial variables. It would yield a complicated pseudo-differential operator ``inside'' the spatial entropy. \\
Recalling the discussion from \S\ref{subsec:Fourier} we see that the overall decay rate, $\mu=\frac{\sigma}{2}-\sqrt{\frac{\sigma^2}{4}-1}$ is only determined by the modes $k=\pm1$. Since all the other modes decay faster, we are not obliged to use ``optimal'' modal Lyapunov functionals for these higher modes. This gives some leeway for choosing the matrices $\PP_k$, $|k|>1$.

For $k\ne0$ we shall use in fact the matrix 
\begin{equation}\label{Psuff}
\PP_k^{\mathrm{suff}}:=\PP_{k}^{(III)}\pa{\sigma \to \frac{4}{\sigma}}=\begin{pmatrix}
1& -\frac{2i}{k\sigma}\\
\frac{2i}{k\sigma}& 1
\end{pmatrix} >0\ ,
\end{equation}
which satisfies $\PP_{\pm1}^{\mathrm{suff}}=\PP_{\pm1}^{(I)}$ for the crucial lowest modes. 
It also satisfies the following result, which implies exponential decay of all modal Lyapunov functionals $\big\|\binom{\hat u (k,t)}{\hat v (k,t)}\big\|_{\PP_k^{\mathrm{suff}}}^2$, $k\ne0$ with rate $2\mu=\sigma-\sqrt{\sigma^2-4}$.
\begin{lemma}\label{P_sigma>2}
Let $\sigma>2$. Then
$$
  \C^\ast_k \PP^{\mathrm{suff}}_k+\PP^{\mathrm{suff}}_k\C_k-2\mu \PP^{\mathrm{suff}}_k\ge0
  \qquad \forall\,k\ne0\ .
$$
%where $\mu=\frac{\sigma}{2}-\sqrt{\frac{\sigma^2}{4}-1}$.
\end{lemma}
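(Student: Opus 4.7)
The plan is to prove the matrix inequality by explicitly writing $M_k := \C_k^*\PP_k^{\mathrm{suff}}+\PP_k^{\mathrm{suff}}\C_k-2\mu \PP_k^{\mathrm{suff}}$ as a $2\times 2$ Hermitian matrix and verifying that its two diagonal entries and its determinant are non-negative. Everything reduces to elementary algebra once one exploits the Vieta relations satisfied by $\mu$.

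First I would apply the general formula for $\C_k^*\PP_a+\PP_a\C_k$ (with $\PP_a=\bigl(\begin{smallmatrix}1 & a\\ \bar a & 1\end{smallmatrix}\bigr)$) already derived in the paper, taking $a=-\tfrac{2i}{k\sigma}$ so that $\Im a = -\tfrac{2}{k\sigma}$. This gives
$$\C_k^*\PP_k^{\mathrm{suff}}+\PP_k^{\mathrm{suff}}\C_k=\begin{pmatrix} \tfrac{4}{\sigma} & -\tfrac{2i}{k}\\ \tfrac{2i}{k} & \tfrac{2(\sigma^2-2)}{\sigma}\end{pmatrix},$$
and subtracting $2\mu\PP_k^{\mathrm{suff}}$ yields an explicit formula for $M_k$. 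Throughout I would use that $\mu$ is the smaller root of $\lambda^2-\sigma\lambda+1=0$, so that writing $\mu_+:=\sigma-\mu=\tfrac{\sigma}{2}+\sqrt{\tfrac{\sigma^2}{4}-1}$ and $\delta:=\sqrt{\tfrac{\sigma^2}{4}-1}$ we have the identities $\mu\mu_+=1$, $\mu_+-\mu=2\delta$ and $\sigma-2\mu=2\delta>0$.

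For the diagonal entries: a direct computation using $\sigma-2\mu=2\delta$ and $\mu\sigma=1+\mu^2$ gives
$$(M_k)_{11}=\frac{4}{\sigma}-2\mu=\frac{4\mu\delta}{\sigma},\qquad (M_k)_{22}=\frac{2(\sigma^2-2)}{\sigma}-2\mu=\frac{4\mu_+\delta}{\sigma},$$
both manifestly positive for $\sigma>2$. For the off-diagonal I would compute
$$|(M_k)_{12}|^2=\frac{1}{k^2}\left|{-2}+\tfrac{4\mu}{\sigma}\right|^2=\frac{4(\sigma-2\mu)^2}{k^2\sigma^2}=\frac{16\delta^2}{k^2\sigma^2}.$$
Combining these with $\mu\mu_+=1$ yields
$$\det M_k=(M_k)_{11}(M_k)_{22}-|(M_k)_{12}|^2=\frac{16\delta^2\mu\mu_+}{\sigma^2}-\frac{16\delta^2}{k^2\sigma^2}=\frac{16\delta^2}{\sigma^2}\Bigl(1-\frac{1}{k^2}\Bigr)\ge 0$$
for every $k\in\Z$ with $|k|\ge 1$. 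Since $M_k$ is Hermitian, non-negative diagonal entries together with non-negative determinant give $M_k\ge 0$, which is the claim.

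I do not expect a real obstacle here; the only delicate point is recognising the right algebraic identities. It is worth noting that $\det M_k=0$ precisely at $|k|=1$, which is consistent with the earlier spectral discussion: the critical modes $k=\pm1$ saturate the decay rate $2\mu$, so $\PP_k^{\mathrm{suff}}$ cannot do strictly better for those modes, and indeed $\PP_{\pm 1}^{\mathrm{suff}}=\PP_{\pm 1}^{(I)}$ by construction.
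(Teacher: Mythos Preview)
Your proof is correct. The paper does not spell out a proof of this lemma, remarking only that it is ``straightforward'' and that the same computation is embedded in the proof of Theorem \ref{thm:diff_ineq_sigma_constant} for $\sigma>2$; there (and in the suppressed modal computation) the argument proceeds via the quadratic form, reducing to the Young-type inequality $\tfrac{4}{k}\Re(i\bar x y)\le (\sigma-\sqrt{\sigma^2-4})|x|^2+(\sigma+\sqrt{\sigma^2-4})|y|^2$, whereas you verify positive semidefiniteness via the $2\times2$ determinant criterion using the Vieta relations $\mu\mu_+=1$, $\mu+\mu_+=\sigma$. These are equivalent elementary routes to the same inequality; your version has the mild advantage of making the saturation at $|k|=1$ explicit through $\det M_k=\tfrac{16\delta^2}{\sigma^2}(1-k^{-2})$. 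One small caveat: the ``general formula for $\C_k^*\PP_a+\PP_a\C_k$'' you cite does not actually appear in the final text of the paper, so you should either derive it in one line yourself or simply compute $\C_k^*\PP_k^{\mathrm{suff}}+\PP_k^{\mathrm{suff}}\C_k$ directly.
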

The proof of this lemma is straightforward\footnote{In a sense, the same computation that shows this inequality is embedded in the proof of the exponential decay of $E_\theta$ in the next subsection.}.
Proceeding like in \eqref{modalE-sigma<2} we define the modal entropy for any $\br{\widehat{u}(k),\widehat{v}(k)}_{k\in\Z}$ such that $\widehat{u}(0)=0$:
$$\mathcal{E}\pa{\widehat{u},\widehat{v}}:=\sum_{k\in\Z\setminus \br{0}}\norm{\begin{pmatrix}
\widehat{u}(k)\\ \widehat{v}(k)
\end{pmatrix}}^2_{\PP_k^{\mathrm{suff}}}+\norm{\begin{pmatrix}
\widehat{u}(0)\\ \widehat{v}(0)
\end{pmatrix}}^2\ .$$
Due to \eqref{EE=E} and \eqref{Psuff} it is related to the spatial entropy functional from Definition \ref{def:entropy} as
$$\mathcal{E}\pa{\widehat{u},\widehat{v}}=E_{\frac{4}{\sigma}}\pa{u,v}.$$\\

\noindent
$\bm{\sigma=2:}$ Just like in the previous case, the lowest frequency modes $k=\pm 1$ control the large time behaviour. However, the matrices $\C_{\pm1}$ are now defective, which leads to a (purely) exponential decay rate reduced by $\epsilon$.\\
We proceed similarly to the case $\sigma>2$ and define for some $\epsilon>0$:
\begin{equation}\label{Psuff1}
\PP_{\epsilon,k}^{\mathrm{suff}}=
\PP^{(III)}_{k}\pa{\sigma \to \frac{2\pa{2-\epsilon^2}}{2+\epsilon^2}}
=\begin{pmatrix}
1& -\frac{i\pa{2-\epsilon^2}}{k\pa{2+\epsilon^2}}\\
\frac{i\pa{2-\epsilon^2}}{k\pa{2+\epsilon^2}}& 1
\end{pmatrix}>0\ ,
\end{equation}
which satisfies $\PP_{\epsilon,\pm1}^{\mathrm{suff}}=\PP^{(II)}_{\epsilon,\pm 1}$ for the crucial lowest model.  
It also satisfies the following result, which implies exponential decay of all modal Lyapunov functionals $\big\|\binom{\hat u (k,t)}{\hat v (k,t)}\big\|_{\PP_{\epsilon,k}^{\mathrm{suff}}}^2$, $k\ne0$ with rate of at least $2\mu=2(1-\epsilon)$.
\begin{lemma}\label{P_sigma=2}
Let $\sigma=2$. Then
$$
  \C^\ast_k \PP^{\mathrm{suff}}_{\epsilon,k}+\PP^{\mathrm{suff}}_{\epsilon,k}\C_k-2\mu \PP^{\mathrm{suff}}_{\epsilon,k}>0
  \qquad \forall\,k\ne0\ .
$$
%where $\mu=\frac{\sigma}{2}-\sqrt{\frac{\sigma^2}{4}-1}$.
\end{lemma}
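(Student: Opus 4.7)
The proof proceeds by direct matrix computation, exactly as advertised for Lemma~\ref{P_sigma>2}. Writing $\PP^{\mathrm{suff}}_{\epsilon,k} = \bigl(\begin{smallmatrix} 1 & a_k \\ \overline{a_k} & 1 \end{smallmatrix}\bigr)$ with $a_k = -\frac{i(2-\epsilon^2)}{k(2+\epsilon^2)}$ purely imaginary, one first computes $\C_k^\ast \PP^{\mathrm{suff}}_{\epsilon,k} + \PP^{\mathrm{suff}}_{\epsilon,k} \C_k$ by the same scheme used to select $\PP^{(I)}$ and $\PP^{(III)}$ above. Setting $\sigma=2$ in this computation, and using that $a_k-\overline{a_k}=2a_k$, yields a Hermitian $2\times 2$ matrix whose entries depend only on $\epsilon$, $k$, and $a_k$.

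Next, I would subtract $2(1-\epsilon)\,\PP^{\mathrm{suff}}_{\epsilon,k}$ and simplify. After putting the diagonal terms over the common denominator $2+\epsilon^2$, the off-diagonal entry of the resulting matrix $M_k$ becomes $2\epsilon\, a_k$, while its two diagonal entries simplify to $\frac{2\epsilon((\epsilon-1)^2+1)}{2+\epsilon^2}$ and $\frac{2\epsilon((\epsilon+1)^2+1)}{2+\epsilon^2}$, respectively. Both diagonals are strictly positive for $\epsilon>0$, so by Sylvester's criterion it only remains to verify that $\det M_k > 0$. Using the algebraic identities $((\epsilon-1)^2+1)((\epsilon+1)^2+1) = \epsilon^4+4$ and $(2-\epsilon^2)^2 = \epsilon^4 - 4\epsilon^2 + 4$, the determinant inequality reduces to
$$\epsilon^4+4 \;>\; \frac{\epsilon^4-4\epsilon^2+4}{k^2} \qquad \forall\,k\in\Z\setminus\{0\}.$$
This is tightest at $|k|=1$, where it collapses to the trivially true $4\epsilon^2>0$, and becomes only easier as $|k|$ grows, so $M_k>0$ for every $k\ne 0$.

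The main conceptual obstacle — and the reason one must pay the small penalty $\epsilon$ in the rate — is already visible in the $|k|=1$ case: at $\epsilon=0$ the determinant slack $4\epsilon^2$ vanishes, consistent with the defectiveness of $\C_{\pm 1}$ at $\sigma=2$ preventing pure exponential decay at the sharp rate $1$. No step is genuinely difficult, but the bookkeeping is cleanest if one exploits the purely imaginary nature of $a_k$ from the outset, so that $a_k + \overline{a_k}=0$ eliminates several terms in the computation of $\C_k^\ast \PP + \PP \C_k$.
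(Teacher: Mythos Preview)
Your proposal is correct. The paper does not actually supply a proof of this lemma in the text --- it is treated as a straightforward verification, parallel to Lemma~\ref{P_sigma>2}, with the analogous estimate in physical space appearing in the proof of Theorem~\ref{thm:diff_ineq_sigma_constant} (the $\sigma=2$ case). Your direct Sylvester-criterion check is the matrix-level counterpart of that argument: where the paper bounds the quadratic form via AM--GM (using that $(2-\epsilon^2)^2<4+\epsilon^4$), you compute the determinant of $M_k$ and arrive at the same key inequality $\epsilon^4+4>(\epsilon^4-4\epsilon^2+4)/k^2$. The algebra you give --- the diagonal entries $\frac{2\epsilon((\epsilon\mp1)^2+1)}{2+\epsilon^2}$, the off-diagonal $2\epsilon a_k$, and the identity $((\epsilon-1)^2+1)((\epsilon+1)^2+1)=\epsilon^4+4$ --- is all correct, and your observation that the slack $4\epsilon^2$ vanishes at $\epsilon=0$, reflecting the defectiveness of $\C_{\pm1}$, is exactly the point.
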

%The proof of this lemma is straightforward.
Proceeding like in \eqref{modalE-sigma<2} we define the modal entropy for any $\br{\widehat{u}(k),\widehat{v}(k)}_{k\in\Z}$ such that $\widehat{u}(0)=0$:
$$\mathcal{E}\pa{\widehat{u},\widehat{v}}:=\sum_{k\in\Z\setminus \br{0}}\norm{\begin{pmatrix}
\widehat{u}(k)\\ \widehat{v}(k)
\end{pmatrix}}^2_{\PP_{\epsilon,k}^{\mathrm{suff}}}+\norm{\begin{pmatrix}
\widehat{u}(0)\\ \widehat{v}(0)
\end{pmatrix}}^2\ .$$
Due to \eqref{EE=E} and \eqref{Psuff1} it is related to the spatial entropy functional from Definition \ref{def:entropy} as
$$\mathcal{E}\pa{\widehat{u},\widehat{v}}=E_{\frac{2\pa{2-\epsilon^2}}{2+\epsilon^2}}\pa{u,v}.$$\\

\subsection{The evolution of the spatial entropy $E_\theta$}\label{subsec:entropy-evol}
In the previous subsection we have shown how, depending on the value of $\sigma$, the entropies $E_{\sigma}$, $E_{\frac{4}{\sigma}}$ and $E_{\frac{2\pa{2-\epsilon^2}}{2+\epsilon^2}}$ are the correct candidates to show the exponential convergence to equilibrium. A closer look at \eqref{Gronwall} shows that each modal Lyapunov functional $\big\|\binom{\hat u (k,t)}{\hat v (k,t)}\big\|_{\PP_k}^2$ decays exponentially, and hence also the spatial entropy $E_\theta$. Recalling the decay rates presented in \S\ref{entropy-derivation} for the three regimes of $\sigma$, confirms that we have actually already proved most of part \eqref{item:convergence_sigma_constant} of Theorem \ref{thm:main}. However, as our main goal is to consider these functionals in the spatial variable alone (i.e.\ without a modal decomposition), we shall show how one achieves the correct convergence result following a direct calculation. This will also serve as a preparation for \S\ref{sec:sigma_general}.
\begin{theorem}\label{thm:diff_ineq_sigma_constant}
Under the same conditions of Theorem \ref{thm:main} with $\sigma(x)=\sigma$, one has that
\begin{enumerate}[i)]
\item If $0<\sigma<2$ then
$$E_{\sigma}\pa{u(t)-u_{\mathrm{avg}},v(t)} \leq E_{\sigma}\pa{u_{0}-u_{\mathrm{avg}},v_0}e^{- \sigma t}.$$
%$$\frac{d}{dt} E_{\sigma}\pa{u(t)-u_{\mathrm{avg}},v(t)} \leq - \sigma E_{\sigma}\pa{u(t)-u_{\mathrm{avg}},v(t)}.$$
\item If $\sigma>2$ then 
$$E_{\frac{4}{\sigma}}\pa{u(t)-u_{\mathrm{avg}},v(t)} \leq E_{\frac{4}{\sigma}}\pa{u_{0}-u_{\mathrm{avg}},v_0}e^{- \pa{\sigma-\sqrt{\sigma^2-4}} t}.$$
%$$\frac{d}{dt}E_{\frac{4}{\sigma}}\pa{u(t)-u_{\mathrm{avg}},v(t)} \leq -\pa{\sigma-\sqrt{\sigma^2-4}}E_{\frac{4}{\sigma}}\pa{u(t)-u_{\mathrm{avg}},v(t)}.$$
\item If $\sigma=2$ then for any $0<\epsilon<1$
$$E_{\frac{2\pa{2-\epsilon^2}}{2+\epsilon^2}}\pa{u(t)-u_{\mathrm{avg}},v(t)} \leq E_{\frac{2\pa{2-\epsilon^2}}{2+\epsilon^2}}\pa{u_{0}-u_{\mathrm{avg}},v_0}e^{- 2(1-\epsilon) t}.$$
%$$\frac{d}{dt} E_{\frac{2\pa{2-\epsilon^2}}{2+\epsilon^2}}\pa{u(t)-u_{\mathrm{avg}},v(t)} \leq -2\pa{1-\epsilon}E_{\frac{2\pa{2-\epsilon^2}}{2+\epsilon^2}}\pa{u(t)-u_{\mathrm{avg}},v(t)}.$$
\end{enumerate}
\end{theorem}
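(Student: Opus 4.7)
The plan is to start from Proposition \ref{thm:evolution_of_entropy} specialised to a constant $\sigma(x)\equiv\sigma$, which yields
\begin{equation*}
\frac{d}{dt}E_{\theta}\pa{u(t)-u_{\mathrm{avg}},v(t)} = -\theta\norm{u(t)-u_{\mathrm{avg}}}^2 + (\theta-2\sigma)\norm{v(t)}^2 + \theta\sigma\, I(t) - \theta \pa{v(t)_{\mathrm{avg}}}^2,
\end{equation*}
where $I(t):=\frac{1}{2\pi}\int_0^{2\pi}\partial_x^{-1}\pa{u(x,t)-u_{\mathrm{avg}}}\,v(x,t)\,dx$. For each of the three regimes, I will establish a differential inequality of the form $\frac{d}{dt}E_\theta\le -\lambda E_\theta$ with $\lambda$ matching the stated rate, and then conclude by Gronwall's lemma.

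The case $0<\sigma<2$ with $\theta=\sigma$ requires no tools beyond the proposition above. A direct computation of $\frac{d}{dt}E_\sigma + \sigma E_\sigma$ shows that the coefficients of $\norm{u-u_{\mathrm{avg}}}^2$, $\norm{v}^2$ and $I(t)$ cancel identically, leaving
\begin{equation*}
\frac{d}{dt}E_\sigma + \sigma E_\sigma = -\sigma \pa{v(t)_{\mathrm{avg}}}^2 \le 0.
\end{equation*}
In the two remaining cases the mixed term $I(t)$ does not vanish, so I would bound it via Cauchy--Schwarz combined with the Poincar\'e inequality (Lemma \ref{thm:poincare}) applied to the zero-mean function $\partial_x^{-1}(u-u_{\mathrm{avg}})$, yielding $\abs{I(t)}\le \norm{u-u_{\mathrm{avg}}}\,\norm{v}$. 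Dropping the non-positive term $-\theta\pa{v(t)_{\mathrm{avg}}}^2$, and observing that $\theta(\sigma-\lambda)\ge 0$ in both cases, the inequality $\frac{d}{dt}E_\theta+\lambda E_\theta\le 0$ reduces to the pointwise-in-$t$ claim that the quadratic form
\begin{equation*}
Q(X,Y) := (\lambda-\theta)X^2 + (\lambda+\theta-2\sigma)Y^2 + \theta(\sigma-\lambda)XY
\end{equation*}
is non-positive, with $X:=\norm{u(t)-u_{\mathrm{avg}}}$ and $Y:=\norm{v(t)}$.

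The main obstacle is then purely algebraic: checking that the coefficients of $X^2$ and $Y^2$ are non-positive and that the discriminant condition $\theta^2(\sigma-\lambda)^2 \le 4(\theta-\lambda)(2\sigma-\lambda-\theta)$ holds. For $\sigma>2$ with $\theta=4/\sigma$ and $\lambda=\sigma-\sqrt{\sigma^2-4}$, I expect both sides of the discriminant inequality to evaluate to $16(\sigma^2-4)/\sigma^2$ exactly, so that $Q$ is a perfect negative square --- reflecting the sharpness of the rate already identified in \S\ref{subsec:Fourier} and tying in with the choice of $\PP_k^{\mathrm{suff}}$ from Lemma \ref{P_sigma>2}. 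For $\sigma=2$ with $\theta=2(2-\epsilon^2)/(2+\epsilon^2)$ and $\lambda=2(1-\epsilon)$, I would exploit the identity $(\epsilon^2-2\epsilon+2)(\epsilon^2+2\epsilon+2)=\epsilon^4+4$ to produce a strict discriminant gap of size $64\epsilon^4/(2+\epsilon^2)^2$, consistent with the $\epsilon$-deficiency forced by the defective modes at $k=\pm 1$. Once $Q\le 0$ is secured in each case, Gronwall's lemma applied to $\frac{d}{dt}E_\theta+\lambda E_\theta\le 0$ yields the three claimed exponential bounds on $E_\theta$.
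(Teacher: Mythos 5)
Your proposal is correct and follows essentially the same route as the paper: start from Proposition \ref{thm:evolution_of_entropy} with constant $\sigma$, note the exact cancellation for $0<\sigma<2$ with $\theta=\sigma$, and for the other two regimes reduce, after discarding $-\theta\pa{v_{\mathrm{avg}}}^2$ and using Cauchy--Schwarz plus Poincar\'e on the mixed term, to a two--variable algebraic inequality in $X=\norm{u-u_{\mathrm{avg}}}$ and $Y=\norm{v}$. The only cosmetic difference is the final algebraic step: the paper verifies the inequality by the explicit weighted AM--GM factorization, e.g.\ writing $4XY = 2\bigl(\sqrt{\sigma-\sqrt{\sigma^2-4}}\,X\bigr)\bigl(\sqrt{\sigma+\sqrt{\sigma^2-4}}\,Y\bigr)$ and invoking $\pa{\sigma-\sqrt{\sigma^2-4}}\pa{\sigma+\sqrt{\sigma^2-4}}=4$, whereas you package the remainder as the quadratic form $Q(X,Y)$ and check the discriminant criterion $\theta^2(\sigma-\lambda)^2\le 4(\theta-\lambda)(2\sigma-\lambda-\theta)$; your evaluations (equality at $16(\sigma^2-4)/\sigma^2$ for $\sigma>2$, and a strict gap of $64\epsilon^4/(2+\epsilon^2)^2$ for $\sigma=2$ via $(\epsilon^2-2\epsilon+2)(\epsilon^2+2\epsilon+2)=\epsilon^4+4$) are correct and amount to the same computation.
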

\begin{proof}
In order to prove this theorem we shall obtain differential inequalities for $E_\theta$, from which we will conclude the desired result by a simple application of Gronwall's inequality. Using Proposition \ref{thm:evolution_of_entropy} we find that:\\
\underline{If $0<\sigma<2$:}
\begin{equation}\nonumber
\begin{gathered}
\frac{d}{dt}E_{\sigma}\pa{u(t)-u_{\mathrm{avg}},v(t)}=-\sigma \norm{u(t)-u_{\mathrm{avg}}}^2 - \sigma\norm{v(t)}^2\\
+\frac{\sigma^2}{2\pi}\int_0^{2\pi}  \partial_x^{-1} \pa{u(x,t)-u_{\mathrm{avg}}}v(x,t) dx-\sigma\pa{v(t)_{\mathrm{avg}}}^2
\end{gathered}
\end{equation}
\begin{equation}\nonumber
=-\sigma E_{\sigma}\pa{u(t)-u_{\mathrm{avg}},v(t)} -\sigma \pa{v(t)_{\mathrm{avg}}}^2 \leq -\sigma E_{\sigma}\pa{u(t)-u_{\mathrm{avg}},v(t)}.
\end{equation}
Note that, since $v_{\mathrm{avg}}(t)=(v_{0})_{\mathrm{avg}}\,e^{-\sigma t}$, 
we can compute $E_{\theta}\pa{u(t)-u_{\mathrm{avg}},v(t)}$ explicitly.\\
\underline{If $\sigma>2$:} 
\begin{equation}\nonumber
\begin{gathered}
\frac{d}{dt}E_{\frac{4}{\sigma}}\pa{u(t)-u_{\mathrm{avg}},v(t)}=-\frac{4}{\sigma} \norm{u(t)-u_{\mathrm{avg}}}^2 -\pa{2\sigma-\frac{4}{\sigma}}\norm{v(t)}^2\\
+\frac{4}{2\pi}\int_0^{2\pi}  \partial_x^{-1} \pa{u(x,t)-u_{\mathrm{avg}}} v(x,t)dx-\frac{4}{ \sigma} \pa{v(t)_{\mathrm{avg}}}^2\\
\end{gathered}
\end{equation}
\begin{equation}\nonumber
\begin{gathered}
\leq -\pa{\sigma-\sqrt{\sigma^2-4}} E_{\frac{4}{\sigma}}\pa{u(t)-u_{\mathrm{avg}},v(t)} +\pa{\sigma-\sqrt{\sigma^2-4}-\frac{4}{\sigma}}\norm{u(t)-u_{\mathrm{avg}}}^2\\
+\pa{\frac{4}{\sigma}-\sigma-\sqrt{\sigma^2-4}}\norm{v(t)}^2+\frac{4}{2\pi}\pa{1-\frac{\sigma-\sqrt{\sigma^2-4}}{\sigma}}\int_0^{2\pi} \partial_x^{-1}  \pa{u(x,t)-u_{\mathrm{avg}}}v(x,t) dx.
\end{gathered}
\end{equation}
The desired inequality,  
{$\frac{d}{dt}E_{\frac{4}{\sigma}}\le -\big(\sigma-\sqrt{\sigma^2-4}\,\big) E_{\frac{4}{\sigma}}$}, is valid if and only if
\begin{equation}\label{eq:four_over_sigma_proof}
\begin{gathered}
\frac{4}{2\pi}\int_0^{2\pi} \partial_x^{-1}  \pa{u(x,t)-u_{\mathrm{avg}}} v(x,t)dx \\
\leq \pa{\sigma-\sqrt{\sigma^2-4}}\norm{u(t)-u_{\mathrm{avg}}}^2+\pa{\sigma+\sqrt{\sigma^2-4}}\norm{v(t)}^2.
\end{gathered}
\end{equation}
Cauchy-Schwarz inequality, together with Poincar\'e inequality (Lemma \ref{thm:poincare}) and Lemma \ref{lem:properties_of_antiderivative}, imply that
$$\frac{4}{2\pi}\int_0^{2\pi} \partial_x^{-1}  \pa{u(x,t)-u_{\mathrm{avg}}}v(x,t) dx \leq 4\norm{u(t)-u_{\mathrm{avg}}}\norm{v(t)}$$
$$=2\pa{\sqrt{\sigma-\sqrt{\sigma^2-4}}\norm{u(t)-u_{\mathrm{avg}}}}\pa{\sqrt{\sigma+\sqrt{\sigma^2-4}}\norm{v(t)}}\ .$$
Together with the fact that $2\abs{ab}\leq a^2+b^2$ this shows \eqref{eq:four_over_sigma_proof}, concluding the proof in this case.\\
\underline{If $\sigma=2$:} 
\begin{equation}\nonumber
\begin{gathered}
\frac{d}{dt}E_{\frac{2\pa{2-\epsilon^2}}{2+\epsilon^2}}\pa{u(t)-u_{\mathrm{avg}},v(t)}=- \frac{2\pa{2-\epsilon^2}}{2+\epsilon^2}\norm{u(t)-u_{\mathrm{avg}}}^2 -\frac{2\pa{2+3\epsilon^2}}{2+\epsilon^2}\norm{v(t)}^2\\
+\frac{1}{2\pi}\cdot \frac{4\pa{2-\epsilon^2}}{2+\epsilon^2}\int_0^{2\pi}  \partial_x^{-1} \pa{u(x,t)-u_{\mathrm{avg}}}v(x,t) dx-\frac{2\pa{2-\epsilon^2}}{2+\epsilon^2} \pa{v(t)_{\mathrm{avg}}}^2\\
\end{gathered}
\end{equation}
\begin{equation}\nonumber
\begin{gathered}
\leq -2\pa{1-\epsilon}E_{\frac{2\pa{2-\epsilon^2}}{2+\epsilon^2}}\pa{u(t)-u_{\mathrm{avg}},v(t)} -2\epsilon\pa{1-\frac{2\epsilon}{2+\epsilon^2}}\norm{u(t)-u_{\mathrm{avg}}}^2\\
-2\epsilon\pa{1+\frac{2\epsilon}{2+\epsilon^2}}\norm{v(t)}^2+\frac{1}{2\pi}\cdot \frac{4\epsilon\pa{2-\epsilon^2}}{2+\epsilon^2}\int_0^{2\pi}  \partial_x^{-1} \pa{u(x,t)-u_{\mathrm{avg}}}v(x,t) dx.
\end{gathered}
\end{equation}
Like before, the desired inequality will follow if 
$$\frac{1}{2\pi}\cdot \frac{2\pa{2-\epsilon^2}}{2+\epsilon^2} \int_0^{2\pi}  \partial_x^{-1} \pa{u(x,t)-u_{\mathrm{avg}}}v(x,t)dx$$
$$\leq \pa{1-\frac{2\epsilon}{2+\epsilon^2}}\norm{u(t)-u_{\mathrm{avg}}}^2+\pa{1+\frac{2\epsilon}{2+\epsilon^2}}\norm{v(t)}^2.$$
This is valid since
$$\frac{1}{2\pi}\cdot \frac{2\pa{2-\epsilon^2}}{2+\epsilon^2} \int_0^{2\pi}\partial_x^{-1} \pa{u(x,t)-u_{\mathrm{avg}}}v(x,t)dx$$
$$\leq \frac{2\sqrt{4+\epsilon^4}}{2+\epsilon^2}\norm{u(t)-u_{\mathrm{avg}}}\norm{v(t)}= 2 \pa{\sqrt{1-\frac{2\epsilon}{2+\epsilon^2}}\norm{u(t)-u_{\mathrm{avg}}}}\pa{\sqrt{1+\frac{2\epsilon}{2+\epsilon^2}}\norm{v(t)}}$$
$$\leq \pa{1-\frac{2\epsilon}{2+\epsilon^2}}\norm{u(t)-u_{\mathrm{avg}}}^2+\pa{1+\frac{2\epsilon}{2+\epsilon^2}}\norm{v(t)}^2,$$
where we used Cauchy-Schwarz inequality, Poincar\'e inequality, and Lemma \ref{lem:properties_of_antiderivative} again.\\
The theorem is now complete.
\end{proof}
As the last part of this section, we finally prove part \eqref{item:convergence_sigma_constant} of Theorem \ref{thm:main}:
\begin{proof}[Proof of part \eqref{item:convergence_sigma_constant} of Theorem \ref{thm:main}]
The decay estimates of $E_{\theta(\sigma)}$ are already shown in Theorem \ref{thm:diff_ineq_sigma_constant}. To show \eqref{eq:main_for_f_pm_constant} and \eqref{eq:main_for_f_pm_constant_ep} we recall that
$$f_+ = \frac{u+v}{2},\quad f_-=\frac{u-v}{2}\ ,$$ 
and
$$\norm{f}^2+\norm{g}^2\leq \frac{2}{2-\theta}E_{\theta}\pa{f,g},\quad E_{\theta}\pa{f,g}\leq \frac{2+\theta}{2}\pa{\norm{f}^2+\norm{g}^2}$$
for $0<\theta<2$ and $f_{\mathrm{avg}}=0$, according to Lemma \ref{lem:entropy_equivalence}. Thus, using the definition of $f_\infty$ from \eqref{eq:C_sigma_and_f_infty} we see that
$$\norm{f_+(t)-f_\infty}^2 + \norm{f_-(t)-f_\infty}^2  
%= \frac{1}{4}\pa{\norm{u(t)-u_{\mathrm{avg}}+v(t)}^2+\norm{u(t)-u_{\mathrm{avg}}-v(t)}^2}
$$
$$ =\frac12\norm{u(t)-u_{\mathrm{avg}}}^2+\frac12\norm{v(t)}^2 \leq \frac{1}{2-\theta}E_\theta \pa{u(t)-u_{\mathrm{avg}},v(t)}$$
$$ \leq \frac{1}{2-\theta}E_\theta \pa{u_0-u_{\mathrm{avg}},v_0}e^{-2\mu(\sigma)t} \leq \frac12\cdot\frac{2+\theta}{2-\theta}\pa{\norm{u_0-u_{\mathrm{avg}}}^2+\norm{v_0}^2}e^{-2\mu(\sigma)t}$$
%$$=\frac12\cdot \frac{2+\theta}{2-\theta}\pa{\norm{\pa{f_{+,0}-f_\infty}+\pa{f_{-,0}-f_\infty}}^2+\norm{\pa{f_{+,0}-f_\infty}-\pa{f_{-,0}-f_\infty}}^2}e^{-2\mu(\sigma)t}$$
$$=\frac{2+\theta}{2-\theta} \pa{\norm{f_{+,0}-f_\infty}^2+\norm{f_{-,0}-f_\infty}^2} e^{-2\mu(\sigma)t},$$
which shows the result for the appropriate choices of $\theta(\sigma)$ and $\mu(\sigma)$. For $\sigma=2$ we choose
$$\theta(2)=\frac{2\pa{2-\epsilon^2}}{2+\epsilon^2},\quad \mu(2)=1-\epsilon\ .$$

The sharpness of the decay rate for $\sigma\ne2$ can be verified easily on the first mode, e.g.\ for $u_0=0$, $v_0=e^{ix}$.
\end{proof}
With the constant case fully behind us, we can now focus on the case where $\sigma(x)$ is a non-constant function.

%%%%%%%%%%%%%%%%%%%%%%%%%%%%%%%%%%%%%%%%%%%%%%%

\section{$x-$dependent relaxation function}\label{sec:sigma_general}
The large time behaviour of solutions to the Goldstein-Taylor equation \eqref{eq:gt}, or equivalently its recast form \eqref{eq:gt_recast}, becomes increasingly harder to understand, if the relaxation function, $\sigma(x)$, is not a constant. However, as shown in \S\ref{sec:sigma_constant}, we have managed to find a potential spatial entropy that captures the exact behaviour of the decay to equilibrium. The idea that we will employ in this section is to use the same type of entropy to try and estimate the convergence rate \textit{even when $\sigma(x)$ is not constant.} This is, as mentioned in the introduction, a perturbative approach - yet the methodology, and ideas, are robust enough to deal with more complicated systems, as will be shown in the next section.

A fundamental theorem to establish our main result, Theorem \ref{thm:main} \eqref{item:convergence_sigma_not_constant}, is the following:
\begin{theorem}\label{thm:perturbation}
Let $u,v\in C([0,\infty);L^2\pa{\T})$ be mild solutions to \eqref{eq:gt_recast} with initial datum $u_0,\; v_0\in L^2\pa{\T}$. Denoting by $u_{\mathrm{avg}}=\pa{u_0}_{\mathrm{avg}}$ we have that for any given $0<\alpha,\theta<2 $ the conditions
\begin{equation}\label{eq:perturbative_condition_I}
\begin{gathered}
\alpha < \theta,\quad  \theta+\alpha < 2\smin
\end{gathered}
\end{equation}
and
\begin{equation}\label{eq:perturbative_condition_II}
\begin{gathered}
\sup_{x\in\T}\pa{\theta^2\pa{\sigma(x)-\alpha}^2-4\pa{\theta-\alpha}\pa{2\sigma(x)-\theta-\alpha}} \leq 0,
\end{gathered}
\end{equation}
imply that 
\begin{equation}\label{eq:perturbation}
E_{\theta}\pa{u(t)-u_{\mathrm{avg}},v(t)} \leq E_{\theta}\pa{u_0-u_{\mathrm{avg}},v_0}e^{-\alpha t},\quad t\ge0. 
\end{equation}
\end{theorem}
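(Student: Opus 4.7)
The plan is to reduce \eqref{eq:perturbation} to the Gronwall-style differential inequality
\[\frac{d}{dt}E_{\theta}\pa{u(t)-u_{\mathrm{avg}},v(t)}+\alpha E_{\theta}\pa{u(t)-u_{\mathrm{avg}},v(t)}\leq 0,\quad t\geq 0,\]
and then integrate. Following the remark at the end of \S\ref{subsec:main}, I first assume $u_0,v_0\in H^1(\T)$ so that all computations are classically justified; the general $L^2$-case then follows by density together with the norm equivalence $E_\theta\simeq \norm{\cdot}^2+\norm{\cdot}^2$ granted by Lemma \ref{lem:entropy_equivalence} (applicable since $0<\theta<2$).

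Starting from the identity of Proposition \ref{thm:evolution_of_entropy} and adding $\alpha E_\theta$ expanded via Definition \ref{def:entropy}, the manifestly non-positive term $-\theta\pa{v(t)_{\mathrm{avg}}}^2$ may be dropped, and the sought inequality reduces to the claim that
\[\mathcal{I}(t):=\frac{1}{2\pi}\int_0^{2\pi}\Big[-(\theta-\alpha)U(x,t)^2+(\theta+\alpha-2\sigma(x))v(x,t)^2+\theta(\sigma(x)-\alpha)\,w(x,t)\,v(x,t)\Big]\,dx\leq 0,\]
where I write $U(x,t):=u(x,t)-u_{\mathrm{avg}}$ and $w(x,t):=\partial_x^{-1}U(x,t)$. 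Assumption \eqref{eq:perturbative_condition_I} guarantees that $\theta-\alpha>0$, that $2\sigma(x)-\theta-\alpha>0$, and hence that $\sigma(x)-\alpha>\tfrac{1}{2}(\theta-\alpha)>0$ uniformly in $x$, so all the coefficients above have the expected signs.

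The hard part is controlling the non-local cross-term, since it couples $v(x,t)$ to $w(x,t)=\partial_x^{-1}U(x,t)$ rather than to $U(x,t)$ itself. My key step is to absorb its $v$-factor into the negative $v^2$ contribution by a pointwise weighted Young inequality, choosing the $x$-dependent weight precisely so that the resulting $v^2$ coefficient cancels $2\sigma(x)-\theta-\alpha$. After this balancing only a pure $w^2$ integral survives, with coefficient
\[\frac{\theta^2\pa{\sigma(x)-\alpha}^2}{4\pa{2\sigma(x)-\theta-\alpha}},\]
and condition \eqref{eq:perturbative_condition_II} asserts exactly that this coefficient is bounded by $\theta-\alpha$ pointwise in $x$; this is the reason conditions \eqref{eq:perturbative_condition_I}--\eqref{eq:perturbative_condition_II} are imposed.

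To close the argument, note that $w_{\mathrm{avg}}=0$ and $\partial_x w=U$ by Lemma \ref{lem:properties_of_antiderivative}, so Poincar\'e's inequality (Lemma \ref{thm:poincare}) yields $\norm{w}^2\leq \norm{U}^2$. Combining these bounds gives
\[\mathcal{I}(t)\leq -(\theta-\alpha)\norm{U(t)}^2+(\theta-\alpha)\norm{w(t)}^2\leq 0,\]
which is the desired differential inequality. An application of Gronwall's lemma then produces \eqref{eq:perturbation}.
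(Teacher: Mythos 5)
Your proof is correct and follows essentially the same route as the paper: starting from Proposition \ref{thm:evolution_of_entropy}, dropping the manifestly non-positive $-\theta(v_{\mathrm{avg}})^2$ term, absorbing the cross term via a weighted Young inequality with the $x$-dependent weight $2\sigma(x)-\theta-\alpha$, and finishing with the $L^\infty$ bound on the resulting coefficient (which is exactly condition \eqref{eq:perturbative_condition_II}) together with the Poincar\'e inequality. The only cosmetic difference is that you apply Young pointwise before integrating, whereas the paper first applies Cauchy--Schwarz between the two weighted integrals and then Young on the resulting product --- these give the identical estimate.
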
  
\begin{proof}
Using \eqref{eq::evolution_of_entropy} from Proposition \ref{thm:evolution_of_entropy}, and the fact that $\theta \pa{v(t)_{\mathrm{avg}}}^2 \geq 0$, we find that
\begin{equation}\label{E-diff-inequ}
\begin{gathered}
\frac{d}{dt}E_{\theta}\pa{u(t)-u_{\mathrm{avg}},v(t)}\leq -\alpha E_{\theta}\pa{u(t)-u_{\mathrm{avg}},v(t)}-\pa{\theta-\alpha} \norm{u(t)-u_{\mathrm{avg}}}^2 \\
- \frac{1}{2\pi} \int_0^{2\pi}( 2\sigma(x)-\theta-\alpha)v(x,t)^2 dx
+\frac{\theta}{2\pi}\int_0^{2\pi}\pa{\sigma(x)-\alpha}\partial_x^{-1} \pa{u(x,t)-u_{\mathrm{avg}}} v(x,t)  dx.
\end{gathered}
\end{equation}
The proof of the theorem will follow from the above inequality if we can show that 
\begin{equation}\label{eq:perturbation_proof}
\begin{gathered}
\frac{\theta}{2\pi}\int_0^{2\pi}\pa{\sigma(x)-\alpha}\partial_x^{-1} \pa{u(x,t)-u_{\mathrm{avg}}} v(x,t)  dx \\
\leq \pa{\theta-\alpha} \norm{u(t)-u_{\mathrm{avg}}}^2 +\frac{1}{2\pi} \int_0^{2\pi}( 2\sigma(x)-\theta-\alpha)v(x,t)^2 dx.
\end{gathered}
\end{equation}
Due to condition \eqref{eq:perturbative_condition_I} we have that
$$\inf_{x\in \T}\pa{2\sigma(x)-\theta-\alpha}=2\smin-\theta-\alpha>0.$$
Hence, we obtain with Cauchy-Schwarz, Young's inequality $\abs{ab} \leq \frac{a^2}{\theta}+\frac{\theta b^2}{4}$, and the Poincar\'e inequality, \eqref{eq:poincare}, that
\begin{equation}\label{eq:perturbation_for_app}
\begin{gathered}
\abs{\frac{\theta}{2\pi}\int_0^{2\pi}\pa{\sigma(x)-\alpha}  \partial_x^{-1} \pa{u(x,t)-u_{\mathrm{avg}}} v(x,t)  dx }\\
\leq \frac{\theta}{2\pi}\int_0^{2\pi}\sqrt{2\sigma(x)-\theta-\alpha} \abs{v(x,t)} \frac{\abs{\sigma(x)-\alpha}}{\sqrt{2\sigma(x)-\theta-\alpha}}\abs{\partial_x^{-1} \pa{u(x,t)-u_{\mathrm{avg}}} }dx 
\end{gathered}
\end{equation}
\begin{equation}\nonumber
\begin{gathered}
\leq \frac{\theta}{2\pi}\pa{\int_0^{2\pi}\pa{2\sigma(x)-\theta-\alpha} v(x,t)^2dx}^{\frac{1}{2}}\pa{\int_{0}^{2\pi}\frac{\pa{\sigma(x)-\alpha}^2}{2\sigma(x)-\theta-\alpha} \pa{\partial_x^{-1} \pa{u(x,t)-u_{\mathrm{avg}}} }^2dx }^{\frac{1}{2}} \\
\end{gathered}
\end{equation}
\begin{equation}\nonumber
\begin{gathered}
\leq \frac{1}{2\pi} \int_0^{2\pi}( 2\sigma(x)-\theta-\alpha)v(x,t)^2 dx +
\frac{1}{2\pi} \int_0^{2\pi}\frac{\theta^2\pa{\sigma(x)-\alpha}^2} {4\pa{2\sigma(x)-\theta-\alpha}}\pa{\ad\pa{u(x,t)-u_{\mathrm{avg}}}}^2 dx \\
\leq \frac{1}{2\pi} \int_0^{2\pi}( 2\sigma(x)-\theta-\alpha)v(x,t)^2 dx + 
\sup_{x\in\T}\pa{\frac{\theta^2\pa{\sigma(x)-\alpha}^2}{4\pa{2\sigma(x)-\theta-\alpha}}}\norm{u(t)-u_{\mathrm{avg}}}^2 .
\end{gathered}
\end{equation}
\begin{comment}
where we rewrote
$$\sigma(x)-\alpha=\sqrt{2\sigma(x)-\theta-\alpha}\cdot \frac{\sigma(x)-\alpha}{\sqrt{2\sigma(x)-\theta-\alpha}}$$
so that the term with $v(x,t)$ we obtain (with the help of Young and Poincar\'e inequalities) would be \textit{exactly} the one that appears in the right hand side of \eqref{eq:perturbation_proof}.\\
\end{comment}
The above implies that \eqref{eq:perturbation_proof} will be valid when
$$\sup_{x\in\T}\frac{\theta^2\pa{\sigma(x)-\alpha}^2} {4\pa{2\sigma(x)-\theta-\alpha}} \leq \theta-\alpha,$$
which is equivalent, due to the positivity of the denominator, to \eqref{eq:perturbative_condition_II}. The proof is thus complete.
\end{proof}
\begin{remark}\label{rem:necessity_of_conditions}
It is worth to note that the conditions expressed in \eqref{eq:perturbative_condition_I} are crucial in our estimation. Indeed, they tell us that  
$$\pa{\theta-\alpha} \norm{u(t)-u_{\mathrm{avg}}}^2\quad\text{and}\quad  \int_0^{2\pi}( 2\sigma(x)-\theta-\alpha)v(x,t)^2 dx$$
are non-negative. If one part of the condition would not be true, we would be able to ``cook'' initial data such that the mixed $u$--$v$--term {in \eqref{eq:perturbation_proof}} is zero, and the above terms add up to something strictly negative - breaking the functional inequality we are aiming to attain.
\end{remark}
The next step towards proving part \eqref{item:convergence_sigma_not_constant} in Theorem \ref{thm:main} is to look for $\theta$ and $\alpha$ such that conditions \eqref{eq:perturbative_condition_I} and \eqref{eq:perturbative_condition_II} are satisfied. \\
%and the decay rate in \eqref{eq:perturbation}, $\alpha$, is maximised.\\
We recall the definition of $\theta^\ast$ from Theorem \ref{thm:main}:
\begin{equation}\nonumber
\theta^\ast:=\min\pa{\smin,\frac{4}{\smax}},
\end{equation}
which in a sense captures the ``worst possible'' behaviour when comparing $\sigma(x)$ to the constant case (with $\sigma\not=2$). We show the following:
\begin{lemma}\label{lem:value_of_alpha}
Assume that $0<\smin<\smax<\infty$, where $\smin$ and $\smax$ were defined in Theorem \ref{thm:main}. Then 
\begin{equation}\nonumber
\alpha^\ast:=\alpha^\ast\pa{\smin,\smax}:=\begin{cases}
\frac{\smin\pa{4+2\sqrt{4-\smin^2}-\smin\smax}}{4+2\sqrt{4-\smin^2}-\smin^2},  & \smin < \frac{4}{\smax}\\
\smax - \sqrt{\smax^2-4}, & \smin \geq \frac{4}{\smax} 
\end{cases}
\end{equation}
is such that $\theta^\ast$ and $\alpha^\ast$ satisfy conditions \eqref{eq:perturbative_condition_I} and \eqref{eq:perturbative_condition_II}.
\end{lemma}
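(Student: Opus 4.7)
The plan is to reduce the pointwise supremum in \eqref{eq:perturbative_condition_II} to an endpoint check. Viewing its left-hand side as a function of $s = \sigma(x)$,
\[
g(s) := \theta^2(s - \alpha)^2 - 4(\theta - \alpha)(2s - \theta - \alpha),
\]
one sees that $g$ is a convex quadratic in $s$ (leading coefficient $\theta^2 > 0$). Since $\sigma(x) \in [\smin, \smax]$ almost everywhere, convexity yields
\[
\sup_{x \in \T} g(\sigma(x)) \le \max\{g(\smin),\, g(\smax)\},
\]
so \eqref{eq:perturbative_condition_II} follows from the two scalar inequalities $g(\smin) \le 0$ and $g(\smax) \le 0$. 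Condition \eqref{eq:perturbative_condition_I} will then be a direct substitution in each regime.

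\textbf{Case A} ($\smin < 4/\smax$, $\theta^\ast = \smin$). Setting $\theta = \smin$ causes the left endpoint to telescope: one computes
\[
g(\smin) = (\smin - \alpha^\ast)^2 (\smin^2 - 4),
\]
which is non-positive since $\smin^2 \le \smin \smax < 4$. For the right endpoint, $g(\smax) = 0$ is a quadratic in $\alpha$ whose discriminant simplifies to $4(\smax - \smin)^2(4 - \smin^2)$, giving the roots $\alpha = \smax \pm 2(\smax - \smin)/\sqrt{4 - \smin^2}$; the smaller root rationalizes (multiplying numerator and denominator by $\sqrt{4-\smin^2}+2$) to the stated closed form of $\alpha^\ast$. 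The inequalities in \eqref{eq:perturbative_condition_I} reduce to $\alpha^\ast < \smin$, which is equivalent to $\sqrt{4 - \smin^2} < 2$, automatic for $\smin > 0$.

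\textbf{Case B} ($\smin \ge 4/\smax$, $\theta^\ast = 4/\smax$, $\alpha^\ast = \smax - \sqrt{\smax^2 - 4}$). The workhorse identity here is $\alpha^\ast(\smax + \sqrt{\smax^2 - 4}) = 4$; plugging it into the expression for $g(\smax)$ yields $0$ after cancellation. For the left endpoint, since $g$ is convex with a root at $\smax$, Vieta's formulas identify its second root $s_0$ as
\[
s_0 = \smax - \tfrac12 \alpha^\ast(\smax^2 - 4),
\]
and $g \le 0$ on $[s_0, \smax]$. It thus suffices to show $s_0 \le 4/\smax \le \smin$. Clearing fractions and invoking the workhorse identity once more reduces the first inequality to $\smax \sqrt{\smax^2 - 4} \le \smax^2 - 2$, which after squaring becomes the tautology $0 \le 4$. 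Condition \eqref{eq:perturbative_condition_I} follows analogously, using that $\smax > 2$ in this regime (forced by $\smin \smax \ge 4$ together with $\smin < \smax$).

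\textbf{Main obstacle.} The delicate step is the left-endpoint check in Case B: writing $s_0$ in a tractable form and reducing the resulting inequality to a triviality requires invoking the identity $\alpha^\ast(\smax + \sqrt{\smax^2 - 4}) = 4$ repeatedly. Attempting instead to substitute the explicit formula for $\alpha^\ast$ into $g(\smin)$ directly produces an unwieldy expression whose non-positivity is not manifest. Everything else is routine algebra powered by the convexity of $g$ in $s$.
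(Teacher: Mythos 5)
Your proposal is correct, and the underlying idea is the same as the paper's: treat the expression inside the supremum of \eqref{eq:perturbative_condition_II} as an upward parabola $g(s)$ in $s=\sigma(x)$, so that \eqref{eq:perturbative_condition_II} reduces to checking the two endpoints $s=\smin$ and $s=\smax$. The paper then makes the threshold structure explicit: it computes the roots $y_\pm(\alpha,\theta)$, inverts the two endpoint conditions into $\alpha\le\gamma_{\mathrm{min}}(\theta)$ and $\alpha\le\gamma_{\mathrm{max}}(\theta)$, shows $\gamma_{\mathrm{max}}\le\gamma_{\mathrm{min}}$ and $\gamma_{\mathrm{max}}(\theta)<\theta$ for $\theta\le\theta^\ast$, and finally reads off $\alpha^\ast=\gamma_{\mathrm{max}}(\theta^\ast)$. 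You instead substitute the specific pair $(\theta^\ast,\alpha^\ast)$ and verify $g(\smin)\le 0$, $g(\smax)\le 0$ directly (with $g(\smax)=0$ by construction). This is a more compact verification that loses the information the paper's version carries--namely the whole admissible family and the optimality of $\alpha^\ast=\gamma_{\mathrm{max}}(\theta^\ast)$ used in Remark \ref{rem:optimisation_of_gamma}--but it suffices for the lemma as stated.

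Two small corrections. In Case A, the asserted equivalence is wrong: clearing the positive denominator $4+2\sqrt{4-\smin^2}-\smin^2$ shows $\alpha^\ast<\smin$ is equivalent to $\smin\smax>\smin^2$, i.e.\ to $\smax>\smin$, not to $\sqrt{4-\smin^2}<2$; the conclusion is still immediate from the standing hypothesis $\smin<\smax$. Also, Theorem \ref{thm:perturbation} requires $0<\alpha,\theta<2$, which you should state explicitly: $\theta^\ast=\smin<2$ in Case A (from $\smin^2<\smin\smax<4$), $\theta^\ast=4/\smax<2$ in Case B (from $\smax>2$), and in both cases $0<\alpha^\ast<\theta^\ast$. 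Neither issue affects the validity of the argument.
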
 
\begin{proof}
Clearly, since
$$\theta^\ast \leq \begin{cases} \smin, & \smin <\smax \leq 2 \\  \frac{4}{\smax}, & \smax >2  \end{cases}$$
we always have that $0<\theta^\ast <2$. \\
We continue by considering condition \eqref{eq:perturbative_condition_II}, and finding appropriate parameters which will give condition \eqref{eq:perturbative_condition_I} automatically. Denoting by
$$f\pa{\alpha,\theta,y}:=\theta^2\pa{y-\alpha}^2-4\pa{\theta-\alpha}\pa{2y-\theta-\alpha}$$
for $\pa{\alpha,\theta}$ that satisfy condition \eqref{eq:perturbative_condition_I} and $y\in\rpa{\smin,\smax}$, we find that for fixed $\alpha$ and $\theta$, $f$ is an upward parabola in $y$ whose non-positive part lies between its roots
$$y_{\pm}\pa{\alpha,\theta}:=\alpha + \frac{2\pa{\theta-\alpha}}{\theta^2}\pa{2\pm \sqrt{4-\theta^2}}.$$
Thus, condition \eqref{eq:perturbative_condition_II} is satisfied if and only if
$$y_{-}\pa{\alpha,\theta}\leq \smin,\quad\text{and}\quad \smax \leq y_{+}\pa{\alpha,\theta}.$$
\begin{comment}
A simple calculation, using the fact that for $0<\theta<2$
$$2\sqrt{4-\theta^2} > 4-\theta^2,$$
shows that for a fixed $\theta$
\end{comment}
A simple calculation shows that for $0<\theta<2$
$$y_{-}\pa{\alpha,\theta}\leq \smin\;\;\Leftrightarrow\;\; \alpha \leq \frac{\theta\pa{2\sqrt{4-\theta^2}-\pa{4-\smin \theta}}}{2\sqrt{4-\theta^2}-\pa{4-\theta^2}}=:\gamma_{\mathrm{min}}\pa{\theta},$$
$$\smax \leq y_{+}\pa{\alpha,\theta}\;\;\Leftrightarrow\;\; \alpha \leq \frac{\theta\pa{2\sqrt{4-\theta^2}+\pa{4-\smax \theta}}}{2\sqrt{4-\theta^2}+\pa{4-\theta^2}}=:\gamma_{\mathrm{max}}\pa{\theta}.$$
This means that, if we choose $\alpha\pa{\theta}$ for a fixed $\theta$ so that condition \eqref{eq:perturbative_condition_II} is valid, we must have that
$$\alpha\pa{\theta} \leq \min\pa{\gamma_{\mathrm{min}}\pa{\theta},\gamma_{\mathrm{max}}\pa{\theta}}.$$
One can continue and show that (see Appendix \ref{appsec:proofs}):
\begin{enumerate}[(i)]
\item\label{item:max_less_min_gamma} For $\theta \leq \smin$ and $0<\theta<2$ we have that $\gamma_{\mathrm{max}}\pa{\theta} \leq \gamma_{\mathrm{min}}\pa{\theta}$.
\item\label{item:max_less_theta} For $\theta \leq \frac{4}{\smax}$ and $0<\theta<\smax$ we have that $0<\gamma_{\mathrm{max}}\pa{\theta}< \theta$.
\end{enumerate}
With these observations we deduce that for any 
$$
\theta\in (0,\theta^\ast]= \left(0,\min\pa{\smin,\frac{4}{\smax}} \right]\cap (0,2)
$$
we have $\theta<\smax$ and hence
$$\gamma_{\mathrm{max}}(\theta)=\min\pa{\gamma_{\mathrm{min}}(\theta),\gamma_{\mathrm{max}}(\theta)}\quad \text{and} \quad\gamma_{\max}\pa{\theta}<\theta.$$
Hence, the pair $\pa{\theta,\alpha=\gamma_{\mathrm{max}}(\theta)}$ satisfies not only condition \eqref{eq:perturbative_condition_II} but also
$$\gamma_{\mathrm{max}}(\theta)+\theta < 2\theta \leq 2\theta^\ast \leq 2\smin\quad \text{and}\quad \gamma_{\mathrm{max}}\pa{\theta}< \theta,$$
i.e. condition \eqref{eq:perturbative_condition_I}.
We conclude that $\theta$ and $\alpha=\gamma_{\mathrm{max}}\pa{\theta}$ satisfy both desired conditions, for any $\theta\in (0,\theta^\ast]$.\\
Noticing that 
$$\gamma_{\mathrm{max}}\pa{\theta^\ast}=%\begin{cases}
\left\{\begin{aligned}
& \frac{\smin\pa{2\sqrt{4-\smin^2}+\pa{4-\smax \smin}}}{2\sqrt{4-\smin^2}+\pa{4-\smin^2}}, & \smin < \frac{4}{\smax} \\
& \frac{\frac{8}{\smax}\sqrt{4-\frac{16}{\smax^2}}}{2\sqrt{4-\frac{16}{\smax^2}}+ 4-\frac{16}{\smax^2}}, &\smin>\frac{4}{\smax}
%\end{cases}
\end{aligned}\right\}
=\alpha^\ast\pa{\smin,\smax},$$
we conclude the proof.
\end{proof}
\begin{remark}\label{rem:optimisation_of_gamma}
The choice of $\alpha^\ast\pa{\smin,\smax}=\gamma_{\mathrm{max}}\pa{\theta^\ast}$ is not accidental. Indeed, one can easily show that 
$$\frac{d}{d\theta}\gamma_{\mathrm{max}}\pa{\theta}=\frac{8-2\smax \theta}{\pa{4-\theta^2}^{\frac{3}{2}}},$$
and as such 
$$\max_{\theta\in(0,\theta^\ast]}\gamma_{\mathrm{max}}\pa{\theta}=\gamma_{\mathrm{max}}\pa{\theta^\ast}.$$
As the parameter $\alpha^\ast=\gamma_{\mathrm{max}}\pa{\theta^\ast}$ corresponds to the decay rate of our entropy according to Theorem \ref{thm:perturbation}, our choice of $\alpha^\ast\pa{\smin,\smax}$ was motivated by maximising the decay rate that is possible with our methodology. 
\end{remark}
We now posses all the tools which are required to prove part \eqref{item:convergence_sigma_not_constant} of Theorem \ref{thm:main}.
\begin{proof}[Proof of part \eqref{item:convergence_sigma_not_constant} of Theorem \ref{thm:main}]
The convergence estimation for $E_{\theta^\ast}\pa{u(t)-u_{\mathrm{avg}},v(t)}$ follows immediately from Theorem \ref{thm:perturbation} and Lemma \ref{lem:value_of_alpha}. To obtain \eqref{eq:main_for_f_pm_non_constant} we use Lemma \ref{lem:entropy_equivalence} in a similar fashion to the way we proved part \eqref{item:convergence_sigma_constant}.
\end{proof}
%%%%%%%%%%%%%%%%%%%%%%%%%%%%%%%%%%%%%%%%%%%%%%%%%%%%%%%%%%%%%%%%%

\section{Convergence to equilibrium in a $3-$velocity Goldstein-Taylor model}\label{sec:three-velocities}
The Goldstein-Taylor model can be thought of as a simplification of the BGK equation \cite{BGK, AAC}
$$\partial_{t}f(x,v,t)+ v\cdot \nabla _x f(x,v,t)- \nabla_x V(x)\cdot \nabla_{v}f(x,v,t)=M(v)\int f(x,v,t)dv - f(x,v,t),$$
where the variable $v$ is now in the discrete velocity space $\br{v_1,\dots,v_n}$, the variable $x$ is in the torus $\T$, {and} the potential $V(x)$ is zero. {The r.h.s.\ of the above BGK equation corresponds to a projection onto the Maxwellian $M(v)$; in the discrete velocity case this Maxwellian is replaced by a constant matrix that determines the large time behaviour of the new model.} Under the natural physical assumption of symmetry in the velocities (i.e. $\sum_{i=1}^n v_i=0$) and the expectation that the solutions will converge towards a state that is \textit{equally distributed in $v$} and constant in $x$ \footnote{
%This is motivated from the $2-$velocities Goldstein-Taylor model, where we expect the velocities distributions to behave the same. 
{If one wants to approximate the BGK equation with a Maxwellian relaxation function, then the column vector $(\frac1n,\dots,\frac1n)^T$ inside the relaxation matrix would have to be replaced by a {\it discrete Maxwellian}, as was done \cite[\S4.2]{Achleitner2018}}.
%discretisation of it and $n$ would tend to infinity. This will yield, for any fixed $n$, an unequal distribution in $v$ of the appropriate equilibrium states.
}, we find one potential multi-velocity extension of the Goldstein-Taylor model on $\T\times (0,\infty)$:
\begin{equation}\label{eq:gt_multi}
\partial_{t}\begin{pmatrix} f_1(x,t) \\ \vdots \\ f_n(x,t) \end{pmatrix}
+\mathcal V \begin{pmatrix} f_1(x,t) \\ \vdots \\ f_n(x,t) \end{pmatrix} = \sigma(x)\pa{\begin{pmatrix}  \frac{1}{n} \\ \vdots \\ \frac{1}{n} \end{pmatrix}\otimes \pa{1,\dots,1}-\II}\begin{pmatrix} f_1(x,t) \\ \vdots \\ f_n(x,t) \end{pmatrix},
\end{equation}
with the  the diagonal matrix $\mathcal V:=\mbox{diag}[v_1,\dots,v_n]$, and the discrete velocities
$$\br{v_1,\dots,v_n}=\begin{cases} \br{-k+\frac{1}{2},\dots,-\frac{1}{2},\frac{1}{2},\dots, k-\frac{1}{2}}, & n=2k \\ \br{-k,\dots, -1,0,1,\dots, k },& n=2k-1\end{cases},\quad n\in\N,\; n\geq 2.$$
The matrix on the r.h.s.\ of \eqref{eq:gt_multi} takes the form
$$ 
%\begin{pmatrix}  \frac{1}{n} \\ \vdots \\ \frac{1}{n} \end{pmatrix}\otimes \pa{1,\dots,1}-\II
\bm{Q}=\frac{1}{n}\begin{pmatrix} 1-n & 1 & \dots & 1 \\ 1 & 1-n & \dots & 1 \\ \vdots & \vdots & \vdots & \vdots \\ 1 & 1 & \dots & 1-n
\end{pmatrix}$$
which has  %$\bm{\xi_1}=
$\pa{1,1,\dots, 1}^T$ in its kernel, and $\mathcal{A}=\br{\pa{\xi_1,\dots, \xi_n}^T\in\R^n\;|\; \sum_{i=1}^n \xi_i=0}$ as its $n-1$ dimensional eigenspace corresponding to the eigenvalue $\lambda=-1$. This corresponds to the conservation of total mass, and the fact that differences such as $\br{f_i-f_j}_{i,j=1,\dots, n}$ converge to zero. For more information we refer the interested reader to \cite{AAC}. \\ 
In this section we will consider a simple $3-$velocity Goldstein-Taylor model, which is governed by the following system of equations on $\T\times (0,\infty)$
\begin{equation}\label{eq:gt_three}
\begin{gathered}
\partial_t f_1(x,t) + \partial_x f_1(x,t)=\frac{\sigma(x)}{3}\pa{f_2(x,t)+f_3(x,t)-2f_1(x,t)},\\
\partial_t f_2(x,t) =\frac{\sigma(x)}{3}\pa{f_1(x,t)+f_3(x,t)-2f_2(x,t)},\\
\partial_t f_3(x,t) - \partial_x f_3(x,t)=\frac{\sigma(x)}{3}\pa{f_1(x,t)+f_2(x,t)-2f_3(x,t)}.
\end{gathered}
\end{equation}
Much like our Goldstein-Taylor equation, \eqref{eq:gt}, we can recast the above with the variables
\begin{equation}\label{eq:f-u-transfo}
\begin{pmatrix} u_1 \\ u_2 \\ u_3 \end{pmatrix}=\begin{pmatrix}
\frac{1}{\sqrt{3}} & \frac{1}{\sqrt{3}} & \frac{1}{\sqrt{3}} \\
\frac{1}{\sqrt{2}} & 0 & -\frac{1}{\sqrt{2}} \\
\frac{1}{\sqrt{6}} & -\frac{2}{\sqrt{6}} & \frac{1}{\sqrt{6}}
\end{pmatrix} \begin{pmatrix}
f_1 \\ f_2 \\f_3
\end{pmatrix}\ ,
\end{equation}
which yields the following set of equations:
\begin{equation}\label{eq:gt_three_recast}
\begin{gathered}
\partial_t u_1(x,t) + \sqrt{\frac{2}{3}}\partial_x u_2(x,t)=0,\\
\partial_t u_2(x,t) +\sqrt{\frac{2}{3}} \partial_x u_1(x,t)+\frac{1}{\sqrt{3}}\partial_x u_3(x,t)=-\sigma(x) u_2(x,t),\\
\partial_t u_3(x,t) + \frac{1}{\sqrt{3}}\partial_x u_2(x,t)= - \sigma(x) u_3(x).
\end{gathered}
\end{equation}
{The} orthogonal transformation \eqref{eq:f-u-transfo} has a strong geometrical reasoning behind it, as it {diagonalises the appropriate ``interaction matrix'', $\bm{Q}$. It is also worth to mention that much like \eqref{eq:gt_recast}, this transformations brings us to the macroscopic variables. Indeed, up to some scaling $u_1$ is the mass, $u_2$ is the flux, and $u_3$ is a linear combination of the kinetic energy and the mass.

Following our intuition we expect that by denoting
$$u_\infty := \frac{1}{2\sqrt{3}\pi}\int_{\T}\pa{f_{1,0}(x)+f_{2,0}(x)+f_{3,0}(x)}dx,$$
we will find that 
$$u_1(t,x)\stackrel{t\to\infty}{\longrightarrow} u_\infty,\quad u_2(t,x)\stackrel{t\to\infty}{\longrightarrow} 0,\quad u_3(t,x)\stackrel{t\to\infty}{\longrightarrow} 0.$$
%The case $\sigma(x)=\sigma>0$ yields some fairly simple ODEs for $m_i(t)=\int_{\T}u_i(x,t)dx$, which confirm the above. \\
%Looking at \eqref{eq:gt_three_recast}, we see that the first two equations are similar to \eqref{eq:gt_recast}, though an additional ``mixed term'' has appeared, and $\partial_x u_1$ is now ``weighted''.\\ 
{To prove this result we shall introduce an appropriate Lyapunov functional. To find this functional, we have two options, even for the simple case of constant $\sigma$ (which is our base case): Proceeding as in \S \ref{sec4.2}, we could use a modal decomposition of \eqref{eq:gt_three_recast} and the (optimal) positive definite matrices $\PP_k$ to construct an entropy functional with sharp decay, and then rewrite it in physical space, using pseudo-differential operators. This construction, which is analogous to the construction of $E_\theta(f,g)$ from \eqref{eq:def_entropy}, can become extremely cumbersome in dimension 3 and higher. \\
As a simpler alternative we shall hence rather follow the strategy from \cite[\S4.3]{AAC} and \cite[\S 2.3]{Achleitner2018}: In Fourier space, the system matrix of \eqref{eq:gt_three_recast} reads as
$$
  \C_k=\begin{pmatrix}
0 & \sqrt{\frac{2}{3}}ik & 0 \\
\sqrt{\frac{2}{3}}ik & \sigma & {\frac{1}{\sqrt 3}}ik \\
0 & {\frac{1}{\sqrt 3}}ik & 0
\end{pmatrix}\ .
$$
We note that, for $k\ne0$, the \textit{hypocoercivity index}\footnote{This index  characterises the degree of degeneracy of ODE or PDE-evolution equations, cf.\ \cite{Achleitner2018}.} of $\C_k$, as well as of \eqref{eq:gt_three_recast} is one, since this index is always bounded from above by the kernel dimension of the symmetric part of the generator, cf.\ \cite{Achleitner2018}. For such index-1 problems, Theorem 2.6 from \cite{Achleitner2018} shows that the choice
$$
  \PP_k=\begin{pmatrix}
1 & \frac{\lambda}{ik} & 0 \\
-\frac{\lambda}{ik} & 1 & 0 \\
0 & 0 & 1
\end{pmatrix}\ \quad k\ne0,
$$
with an appropriate $\lambda\in\R$, always yields a (simple) Lyapunov functional for \eqref{eq:gt_three_recast}, typically with a sub-optimal decay rate. Much like in \S \ref{sec:sigma_general}, this guides us to the definition of our functional, expressed in the following theorem: 
} 

%All of the above give the motivation to the following theorem:
\begin{theorem}\label{thm:three-velocities}
Let $u_1,u_2,u_3\in C([0,\infty);L^2\pa{\T})$ be mild real valued solutions to  \eqref{eq:gt_three_recast} with initial datum $u_{1,0},u_{2,0},u_{3,0}\in L^2\pa{\T}$. Denoting by
$$\mathfrak{E}_{\theta}\pa{f,g,h}:=\norm{f}^2+\norm{g}^2+\norm{h}^2 - \frac{\theta}{2\pi}\int_{0}^{2\pi}\pa{\ad f(x) \,{g(x)}}dx,$$
we have that
\begin{equation}\label{eq:three_velocities}
\mathfrak{E}_{\theta}\pa{u_1(t)-u_{\infty},u_2(t),u_3(t)}\leq \mathfrak{E}_{\theta}\pa{u_{1,0}-u_{\infty},u_{2,0},u_{3,0}} e^{-\alpha t}\ ,\quad t\ge0,
\end{equation}
for any $\theta>0$ and $\alpha>0$ such that
\begin{equation}\label{eq:three_velocities_conditions_I}
\sqrt{\frac{2}{3}}\theta +\alpha < 2\smin, \quad \alpha \leq \sqrt{\frac{2}{3}}\theta,
\end{equation}
and
\begin{equation}\label{eq:three_velocities_conditions_II}
\pa{\sup_{x\in\T}\frac{\theta^2\pa{\sigma(x)-\alpha}^2}{8\sigma(x)-4\sqrt{\frac{2}{3}}\theta-4\alpha}}+\pa{\sup_{x\in\T}\frac{\theta^2}{12\pa{2\sigma(x)-\alpha}}} \leq \sqrt{\frac{2}{3}}\theta-\alpha.
\end{equation}
\end{theorem}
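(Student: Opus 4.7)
The plan is to adapt the argument of Theorem \ref{thm:perturbation} to the three-component entropy $\mathfrak{E}_\theta$: compute $\frac{d}{dt}\mathfrak{E}_\theta$ along a classical solution of \eqref{eq:gt_three_recast}, bound the indefinite cross terms by Cauchy--Schwarz, a weighted Young inequality, and Poincar\'e, and then close a Gronwall estimate. Extension from classical to mild solutions will follow by the same density argument used in \S\ref{sec:sigma_general}.

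\emph{Step 1: the entropy identity.} The first step is a direct analogue of Proposition \ref{thm:evolution_of_entropy}. Differentiating $\norm{u_1-u_\infty}^2+\norm{u_2}^2+\norm{u_3}^2$ along \eqref{eq:gt_three_recast} and using integration by parts on the torus cancels both the $u_1$--$u_2$ and the $u_2$--$u_3$ transport pairs, leaving only the dissipation $-\frac{1}{\pi}\int_0^{2\pi}\sigma(x)(u_2^2+u_3^2)\,dx$. For the mixed piece $\frac{\theta}{2\pi}\int_0^{2\pi}\ad(u_1-u_\infty)u_2\,dx$ we use mass conservation $(u_1(t))_{\mathrm{avg}}=u_\infty$ together with Lemma \ref{lem:properties_of_antiderivative} to write $\ad \partial_t u_1=-\sqrt{\frac{2}{3}}(u_2-(u_2)_{\mathrm{avg}})$ and $\ad \partial_x u_1=u_1-u_\infty$; after one further integration by parts this contributes $-\sqrt{\frac{2}{3}}\theta\,\norm{u_1-u_\infty}^2$ together with the quadratic terms $\sqrt{\frac{2}{3}}\theta\bigl(\norm{u_2}^2-(u_2)_{\mathrm{avg}}^2\bigr)$, $-\frac{\theta}{\sqrt{3}\,2\pi}\int(u_1-u_\infty)u_3\,dx$, and $\frac{\theta}{2\pi}\int \sigma(x)\ad(u_1-u_\infty)u_2\,dx$. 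Adding $\alpha\mathfrak{E}_\theta$, discarding the harmless $-\sqrt{\frac{2}{3}}\theta(u_2(t))_{\mathrm{avg}}^2\le 0$, and grouping by squares, the three ``diagonal'' coefficients $\alpha-\sqrt{\frac{2}{3}}\theta$ of $\norm{u_1-u_\infty}^2$, $\sqrt{\frac{2}{3}}\theta+\alpha-2\sigma(x)$ of $u_2^2$, and $\alpha-2\sigma(x)$ of $u_3^2$ are all rendered non-positive by \eqref{eq:three_velocities_conditions_I} (noting that $\alpha<2\smin$ follows from $\sqrt{\frac{2}{3}}\theta+\alpha<2\smin$ since $\theta>0$).

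\emph{Step 2: absorbing the two cross terms.} Only the two indefinite integrals remain. For $\frac{\theta}{2\pi}\int(\sigma(x)-\alpha)\ad(u_1-u_\infty)u_2\,dx$ we use the exact three-velocity replica of \eqref{eq:perturbation_for_app}: split $\sigma(x)-\alpha=\sqrt{2\sigma(x)-\sqrt{\frac{2}{3}}\theta-\alpha}\cdot\frac{\sigma(x)-\alpha}{\sqrt{2\sigma(x)-\sqrt{\frac{2}{3}}\theta-\alpha}}$ (legitimate by the first part of \eqref{eq:three_velocities_conditions_I}), apply Cauchy--Schwarz, the Young inequality $xy\le x^2+y^2/4$, and finally Poincar\'e (Lemma \ref{thm:poincare}) to the zero-average function $\ad(u_1-u_\infty)$. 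This absorbs the full $u_2^2$-damping and leaves $\sup_{x\in\T}\frac{\theta^2(\sigma(x)-\alpha)^2}{8\sigma(x)-4\sqrt{\frac{2}{3}}\theta-4\alpha}\,\norm{u_1-u_\infty}^2$, which matches the first summand of \eqref{eq:three_velocities_conditions_II}. For $-\frac{\theta}{\sqrt{3}\,2\pi}\int(u_1-u_\infty)u_3\,dx$ we perform the analogous splitting $1=\sqrt{2\sigma(x)-\alpha}\cdot(2\sigma(x)-\alpha)^{-1/2}$, this time pairing the ``heavy'' factor with the $u_3$-damping; the same Cauchy--Schwarz/Young combination absorbs the $u_3^2$-damping and leaves $\sup_{x\in\T}\frac{\theta^2}{12(2\sigma(x)-\alpha)}\,\norm{u_1-u_\infty}^2$, matching the second summand of \eqref{eq:three_velocities_conditions_II}.

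\emph{Conclusion and main obstacle.} Collecting all $\norm{u_1-u_\infty}^2$ contributions yields a coefficient equal to $\alpha-\sqrt{\frac{2}{3}}\theta+(\text{sum of the two suprema})$, which is non-positive precisely by \eqref{eq:three_velocities_conditions_II}. Hence $\frac{d}{dt}\mathfrak{E}_\theta\le -\alpha\mathfrak{E}_\theta$, and Gronwall's inequality gives \eqref{eq:three_velocities}. The only genuinely new feature compared with Theorem \ref{thm:perturbation} is the local cross term $\int(u_1-u_\infty)u_3\,dx$, which originates from the $\partial_x u_3$ coupling in the $u_2$-equation and has no analogue in the two-velocity system. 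The decisive insight --- and the reason \eqref{eq:three_velocities_conditions_II} takes the additive form of a sum of two suprema rather than a single one --- is that the two indefinite integrals must be absorbed into \emph{separate} dissipation channels (the $u_2$-damping for the $\ad(u_1-u_\infty)u_2$ term and the $u_3$-damping for the $(u_1-u_\infty)u_3$ term); balancing them jointly against a single damping channel would either fail or yield a strictly worse condition. Once this splitting is identified the remaining estimates are routine.
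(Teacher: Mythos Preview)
Your proposal is correct and follows essentially the same route as the paper's proof: the entropy identity, the decomposition into diagonal and cross terms, the weighted splittings $\sqrt{2\sigma-\sqrt{\tfrac{2}{3}}\theta-\alpha}$ and $\sqrt{2\sigma-\alpha}$ for the two indefinite integrals, and the final Gronwall step are all identical. Your observation that the additive structure of \eqref{eq:three_velocities_conditions_II} reflects the use of two separate dissipation channels is exactly the mechanism the paper exploits.
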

\begin{remark}\label{rem:weighted_norms}
%Note that the $(1,2)$-- and $(2,1)$--elements of $\PP_k$ correspond, in physical space, to the anti-derivative $\ad$. Hence, the matrix $\PP_k$ is the Fourier symbol for the quadratic form $\mathfrak{E}_{\theta}$, with the choice $\theta=2\lambda$.
%We also point out the structural similarity between $\mathfrak{E}_{\theta}$ and the entropy functional $E_{\theta}$ from \eqref{eq:def_entropy}.\\ 
%In addition, following Lemma \ref{lem:entropy_equivalence} we see that 
{For $0<\theta<2$, $\mathfrak{E}_{\theta}\pa{f,g,h}$ is equivalent to $\norm{f}^2+\norm{g}^2+\norm{h}^2$. Indeed, following Lemma \ref{lem:entropy_equivalence} we see that }
\begin{equation}\nonumber
\begin{split}
\pa{1-\frac{\abs{\theta}}{2}}\pa{\norm{f}^2+\norm{g}^2}+\norm{h}^2\leq \mathfrak{E}_{\theta}\pa{f,g,h} \leq \pa{1+\frac{\abs{\theta}}{2}}\pa{\norm{f}^2+\norm{g}^2}+\norm{h}^2.
\end{split}
\end{equation}
%which shows that for $0<\theta<2$, $\mathfrak{E}_{\theta}\pa{f,g,h}$ is equivalent to $\norm{f}^2+\norm{g}^2+\norm{h}^2.$
\end{remark}
\begin{proof}[Proof of Theorem \ref{thm:three-velocities}]
We start by noticing that the transformation
$$u_1\to u_1-u_\infty,\quad u_2\to u_2,\quad u_3\to u_3$$
keeps \eqref{eq:gt_three_recast} invariant, so we may assume, without loss of generality, that $u_\infty=0$. This, together with the equation for $u_1(x,t)$ implies that
$$\pa{u_1(t)}_{\mathrm{avg}}=\pa{u_{1,0}}_{\mathrm{avg}}=u_{\infty}=0.$$
Next, we compute the time derivatives of the $L^2$ norms and obtain:
\begin{comment}
$$\frac{d}{dt}\norm{u_1(t)}^2=2\inner{u_1,\partial_t u_1}=-2\sqrt{\frac{2}{3}}\inner{u_1,\partial_x u_2},$$
$$\frac{d}{dt}\norm{u_2(t)}^2=-2\sqrt{\frac{2}{3}}\inner{u_2,\partial_x u_1}-\frac{2}{\sqrt{3}}\inner{u_2,\partial_x u_3}-2\inner{u_2,\sigma u_2},$$
and
$$\frac{d}{dt}\norm{u_3(t)}^2=-\frac{2}{\sqrt{3}}\inner{u_3,\partial_x u_2}-2\inner{u_3,\sigma u_3}.$$
Thus
\end{comment}
\begin{equation}\label{eq:three_velocities_proof_I}
\begin{gathered}
\frac{d}{dt}\pa{\norm{u_1(t)}^2 + \norm{u_2(t)}^2+\norm{u_3(t)^2}}=-\frac{1}{\pi}\int_0^{2\pi}\sigma(x)u_2(x,t)^2dx\\
-\frac{1}{\pi}\int_0^{2\pi}\sigma(x)u_3(x,t)^2dx.
\end{gathered}
\end{equation}
Continuing, we see that 
\begin{equation}\nonumber
\begin{gathered}
\frac{d}{dt}\int_{0}^{2\pi} \ad u_1(x,t) u_2(x,t)dx
%=-\sqrt{\frac{2}{3}}\int_{0}^{2\pi} \ad \pa{\partial_x u_2}(x,t) u_2(x,t)dx\\
%-\sqrt{\frac{2}{3}}\int_{0}^{2\pi} \ad u_1(x,t) \partial_x u_1(x,t)dx-\frac{1}{\sqrt{3}}\int_{0}^{2\pi} \ad u_1(x,t) \partial_x u_3(x,t)dx
=2\pi \sqrt{\frac{2}{3}}\pa{\pa{u_2(t)_{\mathrm{avg}}}^2-\norm{u_2(t)}^2} + \frac{2\sqrt{2}\pi}{\sqrt{3}}\norm{u_1(t)}^2
\end{gathered}
\end{equation}
\begin{equation}\nonumber
\begin{gathered}
%-\int_{0}^{2\pi} \sigma(x)\ad u_1(x,t) u_2(x,t)dx
+\frac{1}{\sqrt{3}}\int_{0}^{2\pi}  u_1(x,t)u_3(x,t)dx-\int_{0}^{2\pi} \sigma(x)\ad u_1(x,t) u_2(x,t)dx,
\end{gathered}
\end{equation}
where we used Lemma \ref{lem:properties_of_antiderivative}.
% and the fact that $\pa{u_1(t)}_{\mathrm{avg}}=0$ to conclude that
%$$\int_{0}^{2\pi}2 \ad u_1(x,t) u_1(x,t)dx = \int_{0}^{2\pi}\partial_x \pa{\ad u_1(x,t)}^2dx =0.$$
As such,
%\begin{equation}\nonumber %\label{eq:three_velocities_proof_II}
%\begin{gathered}
%\frac{d}{dt}\pa{- \frac{\theta}{2\pi}\int_{0}^{2\pi}\ad u_1(x,t) u_2(x,t)}dx=\sqrt{\frac{2}{3}}\theta\norm{u_2(t)}^2-\sqrt{\frac{2}{3}}\theta\pa{u_2(t)}^2_{\mathrm{avg}}  \\
%-\sqrt{\frac{2}{3}}\theta\norm{u_1(t)}^2-\frac{\theta}{2\sqrt{3}\pi}\int_{0}^{2\pi}  u_1(x,t)u_3(x,t)dx+\frac{\theta}{2\pi}\int_{0}^{2\pi} \sigma(x)\ad u_1(x,t) u_2(x,t)dx,
%\end{gathered}
%\end{equation}
%from which, 
together with \eqref{eq:three_velocities_proof_I}, we conclude that 
\begin{equation}\label{eq:three_velocities_proof_II}
\begin{gathered}
\frac{d}{dt}\mathfrak{E}_{\theta}\pa{u_1(t),u_2(t),u_3(t)}=-\frac{1}{2\pi}\int_0^{2\pi}\pa{2\sigma(x)-\sqrt{\frac{2}{3}}\theta}u_2(x,t)^2dx\\
-\frac{1}{\pi}\int_0^{2\pi}\sigma(x)u_3(x,t)^2dx-\sqrt{\frac{2}{3}}\theta\norm{u_1(t)}^2-\sqrt{\frac{2}{3}}\theta\pa{u_2(t)_{\mathrm{avg}}}^2\\
-\frac{\theta}{2\sqrt{3}\pi}\int_{0}^{2\pi}  u_1(x,t)u_3(x,t)dx+\frac{\theta}{2\pi}\int_{0}^{2\pi} \sigma(x)\ad u_1(x,t) u_2(x,t)dx.
\end{gathered}
\end{equation}
Thus
\begin{equation}\nonumber
\frac{d}{dt}\mathfrak{E}_{\theta}\pa{u_1(t),u_2(t),u_3(t)}=-\alpha \mathfrak{E}_{\theta}\pa{u_1(t),u_2(t),u_3(t)}+R_{\theta,\alpha,\sigma}(t)
\end{equation}
with
\begin{equation}\label{eq:def_of_R}
\begin{gathered}
R_{\theta,\alpha,\sigma}(t):=-\frac{1}{2\pi}\int_0^{2\pi}\pa{2\sigma(x)-\sqrt{\frac{2}{3}}\theta-\alpha}u_2(x,t)^2dx\\
-\frac{1}{2\pi}\int_0^{2\pi}\pa{2\sigma(x)-\alpha}u_3(x,t)^2dx-\pa{\sqrt{\frac{2}{3}}\theta-\alpha}\norm{u_1(t)}^2-\sqrt{\frac{2}{3}}\theta\pa{u_2(t)_{\mathrm{avg}}}^2\\
-\frac{\theta}{2\sqrt{3}\pi}\int_{0}^{2\pi}  u_1(x,t)u_3(x,t)dx+\frac{\theta}{2\pi}\int_{0}^{2\pi} \pa{\sigma(x)-\alpha}\ad u_1(x,t) u_2(x,t)dx.
\end{gathered}
\end{equation}
To conclude the proof it is enough to show that under conditions \eqref{eq:three_velocities_conditions_I} and \eqref{eq:three_velocities_conditions_II} we have that $R_{\theta,\alpha,\sigma}(t) \leq 0$. We will, in fact, show the stronger statement:
%\footnote{Note that we have used the non-negativity of $3\sigma(x)-\theta-\frac{3\alpha}{2}$, $\sigma(x)-\frac{\alpha}{2}$, and $\frac{2\theta}{3}-\alpha$ in this statement.}}
\begin{equation}\label{eq:three_velocities_proof_III}
\begin{gathered}
\abs{-\frac{\theta}{2\sqrt{3}\pi}\int_{0}^{2\pi}  u_1(x,t)u_3(x,t)dx+\frac{\theta}{2\pi}\int_{0}^{2\pi} \pa{\sigma(x)-\alpha}\ad u_1(x,t) u_2(x,t)dx}\\
\leq \frac{1}{2\pi}\int_0^{2\pi}\pa{2\sigma(x)-\sqrt{\frac{2}{3}}\theta-\alpha}u_2(x,t)^2dx\\
+\frac{1}{2\pi}\int_0^{2\pi}\pa{2\sigma(x)-\alpha}u_3(x,t)^2dx+\pa{\sqrt{\frac{2}{3}}\theta-\alpha}\norm{u_1(t)}^2.
\end{gathered}
\end{equation}
Similarly to the techniques we have used in the proof of part \eqref{item:convergence_sigma_not_constant} of Theorem \ref{thm:main}, and using the positivity of the coefficients in the last two terms (which follows from \eqref{eq:three_velocities_conditions_I}), we see that
$$\abs{\frac{\theta}{2\pi}\int_{0}^{2\pi} \pa{\sigma(x)-\alpha}\ad u_1(x,t) u_2(x,t)dx}$$
$$\leq \frac{\theta}{2\pi}\int_{0}^{2\pi}\frac{\abs{\sigma(x)-\alpha}}{\sqrt{2\sigma(x)-\sqrt{\frac{2}{3}}\theta-\alpha}}\abs{\ad u_1(x,t)} \cdot\sqrt{2\sigma(x)-\sqrt{\frac{2}{3}}\theta-\alpha}\abs{u_2(x,t)}dx$$
$$\leq \pa{\sup_{x\in\T}\frac{\theta^2\pa{\sigma(x)-\alpha}^2}{8\sigma(x)-4\sqrt{\frac{2}{3}}\theta-4\alpha}} \norm{u_1(t)}^2+ \frac{1}{2\pi}\int_0^{2\pi}\pa{2\sigma(x)-\sqrt{\frac{2}{3}}\theta-\alpha}u_2(x,t)^2dx,$$
and that
$$\abs{\frac{\theta}{2\sqrt{3}\pi}\int_{0}^{2\pi}  u_1(x,t)u_3(x,t)dx} \leq \frac{\theta}{2\pi}\int_{0}^{2\pi}  \frac{\abs{u_1(x,t)}}{\sqrt{6\sigma(x)-3\alpha}}\sqrt{2\sigma(x)-\alpha}\abs{u_3(x,t)}dx$$
$$\leq \pa{\sup_{x\in\T}\frac{\theta^2}{12\pa{2\sigma(x)-\alpha}}}\norm{u_1(t)}^2+\frac{1}{2\pi}\int_0^{2\pi}\pa{2\sigma(x)-\alpha}u_3(x,t)^2dx.$$
Thus, one sees that \eqref{eq:three_velocities_proof_III} holds when
$$\pa{\sup_{x\in\T}\frac{\theta^2\pa{\sigma(x)-\alpha}^2}{8\sigma(x)-4\sqrt{\frac{2}{3}}\theta-4\alpha}}+\pa{\sup_{x\in\T}\frac{\theta^2}{12\pa{2\sigma(x)-\alpha}}} \leq \sqrt{\frac{2}{3}}\theta-\alpha,$$
which is \eqref{eq:three_velocities_conditions_II}. The proof is complete.
\end{proof}
While we have elected not to optimise the choice of $\alpha$ (as in \S\ref{sec:sigma_general}), we can still infer the following, simpler yet far from optimal, corollary: 
\begin{corollary}\label{cor:three_velocities_estimation}
Let $\theta>0$ and $\alpha>0$ be such that
\begin{equation}\nonumber
\sqrt{\frac{2}{3}}\theta +\alpha < 2\smin, \quad \alpha \leq \sqrt{\frac{2}{3}}\theta
\end{equation}
and
\begin{equation}\label{eq:cond-simple}
\frac{\theta^2\smax^2}{8\smin-4\sqrt{\frac{2}{3}}\theta-4\alpha}+\frac{\theta^2}{12\pa{2\smin-\alpha}} \leq \sqrt{\frac{2}{3}}\theta-\alpha.
\end{equation}
then 
\begin{equation}\nonumber
\mathfrak{E}_{\theta}\pa{u_1(t)-u_{\infty},u_2(t),u_3(t)}\leq \mathfrak{E}_{\theta}\pa{u_{1,0}-u_{\infty},u_{2,0},u_{3,0}} e^{-\alpha t}.
\end{equation}
In particular, for  
$$\alpha := \min\pa{\frac{\smin}{2},\frac{3\smin}{9\smax^2+1}}$$
we have that {$\mathfrak{E}_{\sqrt{6}\alpha}$} decays exponentially to zero with rate $\alpha$. 
\end{corollary}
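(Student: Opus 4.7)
The strategy is a direct reduction to Theorem~\ref{thm:three-velocities} in two steps: first, bound the two $x$-dependent suprema in \eqref{eq:three_velocities_conditions_II} by $x$-independent expressions to show that \eqref{eq:cond-simple} implies \eqref{eq:three_velocities_conditions_II}; and second, verify that the explicit choice $\theta=\sqrt{6}\,\alpha$ together with $\alpha=\min\!\pa{\smin/2,\,3\smin/(9\smax^2+1)}$ satisfies both \eqref{eq:three_velocities_conditions_I} and \eqref{eq:cond-simple}. Under the hypothesis $\sqrt{2/3}\theta+\alpha<2\smin$, the denominators $8\sigma(x)-4\sqrt{2/3}\theta-4\alpha$ and $12(2\sigma(x)-\alpha)$ are strictly positive and attain their infima at $\sigma(x)=\smin$, while the numerator factor $(\sigma(x)-\alpha)^2$ is bounded above by $(\smax-\alpha)^2\le\smax^2$ (using $0\le\alpha\le\smin\le\sigma(x)\le\smax$). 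Plugging these bounds into \eqref{eq:three_velocities_conditions_II} yields \eqref{eq:cond-simple} as a sufficient condition, after which Theorem~\ref{thm:three-velocities} gives the first claim.

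For the explicit choice, the guiding observation is the algebraic identity $\sqrt{2/3}\cdot\sqrt{6}=2$, so that $\theta=\sqrt{6}\,\alpha$ forces $\sqrt{2/3}\,\theta=2\alpha$ and, consequently, $\sqrt{2/3}\,\theta-\alpha=\alpha$. With this, \eqref{eq:three_velocities_conditions_I} collapses to $3\alpha<2\smin$ (the second inequality $\alpha\le 2\alpha$ is automatic), which is implied by $\alpha\le\smin/2$. For \eqref{eq:cond-simple}, one computes $4\sqrt{2/3}\theta+4\alpha=12\alpha$, so the denominators become $8\smin-12\alpha$ and $12(2\smin-\alpha)$; assuming $\alpha\le\smin/2$ these are bounded from below by $2\smin$ and $3\smin$ respectively. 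Substituting $\theta^2=6\alpha^2$ and dividing through by $\alpha>0$, the inequality \eqref{eq:cond-simple} is implied by
\[
\frac{3\alpha\smax^2}{\smin}+\frac{\alpha}{3\smin}\;=\;\frac{\alpha\pa{9\smax^2+1}}{3\smin}\;\le\;1,
\]
which is exactly $\alpha\le 3\smin/(9\smax^2+1)$.

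Both conditions on $\alpha$ are thus met by the prescribed minimum, and with $\theta=\sqrt{6}\,\alpha$ the first part of the corollary applies, giving the stated exponential decay of $\mathfrak{E}_{\sqrt{6}\alpha}$ at rate $\alpha$. The argument is essentially bookkeeping; there is no genuine obstacle once one notices the identity $\sqrt{2/3}\cdot\sqrt{6}=2$, which is the mechanism that makes the symmetric ansatz $\theta=\sqrt{6}\,\alpha$ collapse the rate $\sqrt{2/3}\theta-\alpha$ precisely to $\alpha$ and decouples the two terms in \eqref{eq:cond-simple} cleanly.
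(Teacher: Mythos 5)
Your proof is correct and follows essentially the same two-step reduction as the paper: bound the $x$-dependent suprema in \eqref{eq:three_velocities_conditions_II} by evaluating denominators at $\sigma(x)=\smin$ and bounding $(\sigma(x)-\alpha)^2\le\smax^2$, then specialize to $\theta=\sqrt6\,\alpha$ and exploit $\sqrt{2/3}\cdot\sqrt6=2$. One small slip in the exposition: you state that $12(2\smin-\alpha)$ is bounded below by $3\smin$, but the bound actually needed (and the one your final display implicitly uses) is $12(2\smin-\alpha)\ge 12\cdot\tfrac{3}{2}\smin=18\smin$, which is what gives $\tfrac{6\alpha}{12(2\smin-\alpha)}\le\tfrac{\alpha}{3\smin}$; the conclusion is unaffected.
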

\begin{proof}
Since $\alpha<2\smin\leq \smax+\smin$ we see that 
$$\alpha - \smax < \smin \leq \sigma(x) <\smax + \alpha,$$
implying that $\pa{\sigma(x)-\alpha}^2 \leq \smax^2$ for any $x\in\T$. Using this with additional elementary estimation on the denominator of the expressions that appear in \eqref{eq:three_velocities_conditions_II}, we see that \eqref{eq:three_velocities_conditions_I} and \eqref{eq:three_velocities_conditions_II} are valid. As such the first statement of the corollary follows from Theorem \ref{thm:three-velocities}. \\
To show the second part of the corollary we notice that with the choice $\theta_{\alpha}:=\sqrt{6}\alpha$ and $\alpha\leq \frac{\smin}{2}$
$$\sqrt{\frac{2}{3}}\theta_\alpha +\alpha=3\alpha< 2\smin,\quad\alpha \leq 2\alpha=\sqrt{\frac{2}{3}}\theta_{\alpha},$$
giving us \eqref{eq:three_velocities_conditions_I}. 
%Plugging $\theta_\alpha$ in the second condition, \eqref{eq:three_velocities_conditions_II}, and 
Using the inequalities
$$8\smin-4\sqrt{\frac{2}{3}}\theta_\alpha-4\alpha\geq 2\smin,\quad\text{and}\quad 2\smin-\alpha \geq  \frac{3}{2}\smin$$
for the l.h.s.\ of \eqref{eq:cond-simple}, 
we see that
$$\frac{\theta_\alpha^2\smax^2}{8\smin-4\sqrt{\frac{2}{3}}\theta_\alpha-4\alpha}+\frac{\theta_\alpha^2}{12\pa{2\smin-\alpha}} \leq \pa{9\smax^2+1}\frac{\alpha^2}{3\smin}.
$$
Thus, since $\sqrt{\frac{2}{3}}\theta_\alpha-\alpha=\alpha$, the desired condition \eqref{eq:cond-simple} is valid when 
$$\alpha \leq  \frac{3\smin}{9\smax^2+1},$$
which concludes the proof.
\end{proof}

%%%%%%%%%%%%%%%%%%%%%%%%%%%%%%%%%%%%%%%%%%%%%%%%%%%%%%%%%%%%%%%%%%%%%%%%%%%%%%%%%

\appendix
\section{Lack of optimality}\label{appsec:lack}
In this appendix we will briefly discuss the lack of optimality of our decay rate for non-homogeneous $\sigma(x)$ in comparison to that given in \cite{BS}. We will even go one step further and show how one can improve our general methodology in simple cases, though even this improvement will fall short of the optimal convergence rate.

As one simple example we will explore the following relaxation function:
\begin{equation}\label{appeq:choice_of_sigma}
\sigma(x):=\begin{cases} 1, & 0 < x \leq \pi \\ 4, & \pi<x\leq 2\pi \end{cases},
\end{equation}
which is motivated by the fact that for this function $\smin=\frac{\smax}{4}$, and so the choice of $\theta^\ast=1$ in our main Theorem \ref{thm:main} \eqref{item:convergence_sigma_not_constant} comes ``from both directions''.\\
Before we start with a more structured discussion, we would like to explain how one can improve the technique we developed in \S\ref{sec:sigma_general}. 
A crucial point in the investigation of the behaviour of $E_{\theta^\ast}$ was to find, and close, a linear differential inequality for this entropy, as can be seen in the proof of Theorem \ref{thm:perturbation}.
 One of the final steps in this proof, appearing in \eqref{eq:perturbation_for_app}, was to combine Poincar\'e inequality with an 
$L^\infty$ estimation on the mixed term of $\ad \pa{u-u_{\mathrm{avg}}}$ and $v$, to show the non-positivity of an appropriate ``remainder''. 
The use of these two inequalities is somewhat crude (yet due to that, quite general), and one can imagine that replacing these two estimations with an inequality that is more $L^2$ based  would improve the range of validity of the theorem. One idea that comes to mind is a \textit{weighted Poincar\'e inequality}, i.e. an inequality of the form 
\begin{equation}\label{appeq:weighted_poincare}
\int_{0}^{2\pi} \pa{f(x)-f_{\mathrm{avg}}}^2 \omega(x)dx \leq C^2\int_{0}^{2\pi} \pa{f^\prime(x)}^2dx.
\end{equation}
for a given weight $\omega(x)\ge0$ and constant $C$. Denoting by\footnote{Note that by definition, and by Lemma \ref{thm:poincare}, $C_\omega \leq \sqrt{\norm{\omega}_{\infty}}$, and as such we will automatically get an improvement to condition \eqref{eq:perturbative_condition_II}. }
$$C_{\omega}:=\inf \br{C>0\;\Big|\;\int_{0}^{2\pi} \pa{f(x)-f_{\mathrm{avg}}}^2 \omega(x)dx \leq C^2\int_{0}^{2\pi} \pa{f^\prime(x)}^2dx,\quad\forall f\in H^1(\T)},$$ 
we can replace condition \eqref{eq:perturbative_condition_II} of Theorem \ref{thm:perturbation} with the improved condition
\begin{equation}\label{appeq:better_condition}
\frac{\theta^2}{4}C^2_{\omega} \leq \theta-\alpha,\quad 
\text{where}\quad \omega(x)=\frac{\pa{\sigma(x)-\alpha}^2}{2\sigma(x)-\theta-\alpha}.
\end{equation}
\eqref{appeq:better_condition} will be explicitly derived in \S\ref{appsubsec:improved}.

{}From this point onwards the appendix will proceed as follows: First we will show how one can find the optimal weighted Poincar\'e constant, and compute it in some simple cases, which we will then use in the case were $\sigma(x)$ is given by \eqref{appeq:choice_of_sigma} to obtain an improvement of our current rate of convergence to equilibrium. Next we will compute the optimal rate given by \cite{BS}, and conclude a lack of optimality by comparing the rate we achieved in our main theorem, the improved rate we have found, and the optimal rate of \cite{BS}.

\subsection{Weighted Poincar\'e inequality}\label{appsubsec:weighted_poincare} The problem of finding a weighted Poincar\'e inequality and its associated sharp constant can be recast as a constrained variational problem. We define the functional
$$\F:\;\;\;\;\D:=H^1\pa{\T}\to \R,$$
where $H^1\pa{\T}$ is the Sobolev space of real valued periodic functions, by
$$\F(u):=\int_0^{2\pi}\pa{u^\prime(x)}^2dx,$$
and denote by
\begin{equation}\label{appeq:minimisation}
c_{\mathrm{min}}:=\inf\br{\F(u)\;\Big|\;u\in\D,\;\int_{0}^{2\pi}u(x)^2\omega(x)dx=1,\;\int_0^{2\pi}u(x)dx=0}.
\end{equation}
Even though the minimization set is not convex, standard techniques from Calculus of Variation (see for instance \cite[\S 8]{Ev}) show that if $\omega$ is bounded then the infimum is attained %, and is in fact a minimum 
(the conditions on $\omega$ can be weakened).\\
One can easily check that in that case
$$C^2_{\omega}=\frac{1}{c_{\mathrm{min}}}.$$
%Thus, to find $C_{\omega}$ we only need to find $c_{\mathrm{min}}$, associated to the variational problem.\\ 
Finding a minimiser to the problem \eqref{appeq:minimisation} amounts to solving the following constrained Euler-Lagrange equation on $\T$
\begin{equation}\label{appeq:ODE_for_minimiser}
u^{\prime\prime}(x)+\lambda u(x)\omega(x)-\tau=0,
\end{equation}
considered in weak form,  with two Lagrange multipliers $\lambda>0$ and $\tau\in\R$. Integrating \eqref{appeq:ODE_for_minimiser} against $u$ shows that
\begin{equation}\label{appeq:lambda_min}
\lambda=\F(u),
\end{equation}
which we will use shortly.

\begin{comment}
The differential equation \eqref{appeq:ODE_for_minimiser} is not simple to solve in general, though under our conditions on $\omega$, one can see that $u$ is actually in $H^2\pa{\T}\hookrightarrow C^1(\T)$, the weak equation is to be understood in a strong sense, and our minimiser will be unique. \\ 
We will focus on simple type of weights, which include \eqref{appeq:choice_of_sigma}, for which we can solve this problem explicitly.
\end{comment}

Since $\omega\in L^\infty(\T)$, we find that $u\in H^2(\T)\hookrightarrow C^1(\T)$. When $\omega$ is piecewise constant, the ODE \eqref{appeq:ODE_for_minimiser}, now in strong form, can be solved explicitly. This shows that in these cases the minimiser of $\F$ is actually unique.

As we shall see in \S\ref{appsubsec:improved} below, the relevant weight functions we require for our improved study are closely related to $\sigma(x)$. With \eqref{appeq:choice_of_sigma} in mind, we shall consider weights of the form:
$$\omega(x):=\begin{cases} \omega_1, & 0< x\leq \pi \\ \omega_2, & \pi<x\leq 2\pi \end{cases}.
$$
Hence, the solution to the Euler-Lagrange equation \eqref{appeq:ODE_for_minimiser} is given by
\begin{equation}\label{eqapp:solution_euler_lagrange}
\begin{gathered}
u(x)=\begin{cases} c_1\sin\pa{\sqrt{\lambda \omega_1}x}+c_2\cos\pa{\sqrt{\lambda \omega_1}x} + \frac{\tau}{\lambda \omega_1}, & 0<x<\pi \\ 
c_3\sin\pa{\sqrt{\lambda \omega_2}x}+c_4\cos\pa{\sqrt{\lambda \omega_2}x} + \frac{\tau}{\lambda \omega_2},& \pi<x<2\pi\end{cases}\\
=:\begin{cases} u_1(x),  & 0<x<\pi \\ u_2(x), & \pi<x<2\pi\end{cases},
\end{gathered}
\end{equation}
and it satisfies the following $C^1$-matching conditions and constraints:
\begin{equation}\label{eqapp:euler_lagrange_BD}
\begin{gathered}
u_1(0)=u_2\pa{2\pi},\\
u_1(\pi)=u_2(\pi),\\
u_1^\prime(0)=u_2^\prime(2\pi),\\
u_1^\prime(\pi)=u_2^\prime(\pi),\\
\int_{0}^{\pi} u_1(x)dx+\int_{\pi}^{2\pi} u_2(x)dx=0,\\
\int_{0}^{\pi} \omega_1u_1(x)^2dx+\int_{\pi}^{2\pi} \omega_2 u_2^2(x)dx=1.
\end{gathered}
\end{equation}
The first five equations correspond to the linear set of equations:
$$\bm{M}(\lambda)\begin{pmatrix}c_1 \\ c_2 \\ c_3 \\ c_4 \\ \tau \end{pmatrix}=\begin{pmatrix}0\\0\\0\\0 \\ 0\end{pmatrix}\ ,$$
where the matrix $\bm{M}\pa{\lambda}$ is
$$\tiny \begin{pmatrix}
0 & 1 & -\sin\pa{2\pi\sqrt{\lambda \omega_2}} & -\cos\pa{2\pi\sqrt{\lambda \omega_2}} & \frac{\omega_2-\omega_1}{\lambda\omega_1\omega_2} \\
\sin\pa{\pi\sqrt{\lambda \omega_1}} & \cos\pa{\pi\sqrt{\lambda \omega_1}} & -\sin\pa{\pi\sqrt{\lambda \omega_2}} & -\cos\pa{\pi\sqrt{\lambda \omega_2}}  & \frac{\omega_2-\omega_1}{\lambda\omega_1\omega_2} \\
\sqrt{\omega_1} & 0 & -\sqrt{\omega_2}\cos\pa{2\pi\sqrt{\lambda \omega_2}} & \sqrt{\omega_2}\sin\pa{2\pi\sqrt{\lambda \omega_2}} & 0\\
\sqrt{\omega_1}\cos\pa{\pi\sqrt{\lambda \omega_1}} & -\sqrt{\omega_1}\sin\pa{\pi\sqrt{\lambda \omega_1}} & -\sqrt{\omega_2}\cos\pa{\pi\sqrt{\lambda \omega_2}} & \sqrt{\omega_2}\sin\pa{\pi\sqrt{\lambda \omega_2}} & 0\\
\frac{1-\cos\pa{\pi\sqrt{\lambda \omega_1}}}{\sqrt{\omega_1}} & \frac{\sin\pa{\pi\sqrt{\lambda\omega_1}}}{\sqrt{\omega_1}} & \frac{\cos\pa{\pi\sqrt{\lambda \omega_2}}-\cos\pa{2\pi\sqrt{\lambda \omega_2}}}{\sqrt{\omega_2}} & \frac{\sin\pa{2\pi\sqrt{\lambda\omega_2}}-\sin\pa{\pi\sqrt{\lambda\omega_2}}}{\sqrt{\omega_2}} & \frac{\pi\pa{\omega_2+\omega_1}}{\sqrt{\lambda}\omega_1\omega_2}
\end{pmatrix}.$$
As we are looking for a non-zero solution to the above equation, we must have that $\text{det}\pa{\bm{M}(\lambda)}=0$. In \eqref{eqapp:euler_lagrange_BD}, the last condition on $u_1$ and $u_2$ merely acts as normalisation, and doesn't help in finding $\lambda$. Hence, due to \eqref{appeq:lambda_min}, we find that
$$c_{\mathrm{min}}\pa{\omega_1,\omega_2}=\min\br{\lambda>0\;|\;\text{det}\pa{\bm{M}(\lambda)}=0}.$$
%u(x)\text{ of the form }\eqref{eqapp:solution_euler_lagrange}\text{ satisfies }\eqref{eqapp:euler_lagrange_BD}}$$
%we just need to choose the minimum value found by the condition $\text{det}\pa{M(\lambda)}=0$, which we will denote by $\lambda_{\mathrm{min}}\pa{\omega_1,\omega_2}$, in order to find our desired Poincar\'e constant.
This is how we can find $c_{\mathrm{min}}$, and consequently $C_{\omega}$, explicitly (numerically in many cases).
%%%%%%%%%%%%%%%%%%%%%%%%%%%%%%%%%%%%%%%%%%%%%%%%%%%%%%%

\subsection{Improved methodology}\label{appsubsec:improved} We return now to the proof of the differential inequality \eqref{E-diff-inequ} that governs the evolution of $E_{\theta}$, which is essentially based on the estimate \eqref{eq:perturbation_for_app}. Choosing $\theta^\ast=1$ and $\sigma(x)$ as in \eqref{appeq:choice_of_sigma} and using the weight
\begin{equation}\nonumber
\omega_{\alpha}(x):=\frac{\pa{\sigma(x)-\alpha}^2}{2\sigma(x)-1-\alpha}=\begin{cases} 1-\alpha & 0< x\leq \pi \\ \frac{\pa{4-\alpha}^2}{7-\alpha} & \pi<x\leq 2\pi \end{cases},
\end{equation}
which appears in the penultimate line of \eqref{eq:perturbation_for_app}, we see that by using the previously discussed weighted Poincar\'e inequality instead of the last step of \eqref{eq:perturbation_for_app}, we obtain from \eqref{E-diff-inequ} and \eqref{eq:perturbation_for_app}:
\begin{equation}\label{E-diff-inequ-new}
\begin{gathered}
\frac{d}{dt}E_1\pa{u(t)-u_{\mathrm{avg}},v(t)} \leq  - \alpha E_1\pa{u(t)-u_{\mathrm{avg}},v(t)} \\
-\pa{1-\alpha-\frac{C^2_{\omega_{\alpha}}}{4}}\norm{u(t)-u_{\mathrm{avg}}}^2.
\end{gathered}
\end{equation}
We will maximise the decay rate $\alpha$, satisfying
\begin{equation}\label{eqapp:improved_main_condition}
0<\alpha \leq 1-\frac{C^2_{\omega_\alpha}}{4}<1
\end{equation}
(so that the second term in \eqref{E-diff-inequ-new} is non-positive) by a processes of iteration:
Guessing the starting value $\alpha_0:=\alpha^{\ast}\pa{1,4}=2\pa{2-\sqrt{3}}$ (the rate one obtains from our main Theorem \ref{thm:main}, cf.\ \eqref{eq:value_of_alpha}) we follow the process described in the previous subsection and find the weighted Poincar\'e constant $C^2_{\omega_{\alpha_0}}=1.12013...$, which indeed satisfies \eqref{eqapp:improved_main_condition}.\\
We proceed and create a sequence $\br{\alpha_n}_{n\in\N_0}$, defined recursively, so that each $\alpha_n$ improves upon the previous step by taking its ``optimal'' value, i.e.
$$\alpha_{n}:=1-\frac{C^2_{\omega_{\alpha_{n-1}}}}{4},\quad\quad n\in\N,$$
as long as \eqref{eqapp:improved_main_condition} is still satisfied {for this choice}. A change of $\alpha$ implies a change of our weight function $\omega_\alpha(x)$, yet these new weights are still of the form given in our previous subsection. As such we are able to compute the appropriate $C_{\omega_{\alpha_n}}\;'$s, and to show that this sequence converges to the improved decay rate\footnote{This process was dealt with numerically.} 
$$\alpha_{\mathrm{max}}\approx 0.7234.$$ 

%%%%%%%%%%%%%%%%%%%%%%%%%%%%%%%%%%%%%%%%%%%%%%%%%%%%%%%%%%%%%%%%%%%%%%

\subsection{Comparison of convergence rates}\label{appsubsec:optiimal_rate} 
The optimal rate\footnote{at least for $H^1$-initial data} of exponential convergence to the Goldstein-Taylor equation, \eqref{eq:gt}, was found by Bernard and Salvarani in \cite{BS}. Taking into account the different scaling of the torus $\T$ in our paper, this convergence rate is given by
$$\alpha_{\mathrm{BS}}:=\frac{1}{\pi}\min\pa{\norm{\widetilde{\sigma}}_{L^1\pa{\frac{\T}{2\pi}}}, \tilde{D}(0)}$$
where
$$\widetilde{\sigma}(\xi):=\pi \sigma\pa{2\pi \xi},\quad\quad \xi\in \frac{\T}{2\pi},$$
and $\tilde{D}(0)$ is the spectral gap of the telegrapher's equation, see \cite[Proposition 3.5]{BS}, \cite[Theorem 2]{L}. More precisely, 
$$
\tilde{D}(0):=\inf\br{\Re \lambda_j\,\big|\,\lambda_j\in \Big(\mbox{ spectrum of }\bm{A}_{\tilde{\sigma}}=\begin{pmatrix}
0 & -1\\
-\partial_{xx} & 2\tilde{\sigma}
\end{pmatrix} \mbox{ in } H^2\oplus H^1\Big)
\setminus\{0\} }.
$$ 
We note that, for $\tilde{\sigma}$ constant and in Fourier space, the matrix $\begin{pmatrix}
0 & -1\\
k^2 & 2\tilde{\sigma}
\end{pmatrix}$ is related to $\C_k$ from \eqref{ODEsystem} by a simple similarity transformation.

%and $\tilde{D}(0)$ is the spectral gap of the operator $A_{\widetilde{\sigma}}$, defined by
%$$A_{\widetilde{\sigma}} =\begin{pmatrix}
%0 & -1 \\
%-\frac{d^2}{dx^2} & 2\widetilde{\sigma}
%\end{pmatrix}, $$
%as was mentioned in \S\ref{sec:intro} (see Footnote \ref{footnote}).}
Following on our choice for $\sigma(x)$ from \eqref{appeq:choice_of_sigma}, we see that
\begin{equation}\label{appeq:specific_sigma_tilde}
\widetilde{\sigma}(\xi)=\begin{cases} \sigma_1:=\pi, & 0 < \xi \leq \frac{1}{2} \\  \sigma_2:=4\pi, & \frac{1}{2}<\xi\leq 1 \end{cases},
\end{equation}
and as such $\norm{\widetilde{\sigma}}_{L^1\pa{\frac{\T}{2\pi}}}=\frac{5\pi}{2}$.\\
The calculation of $\tilde{D}(0)$ is more involved. %as predicted in \S\ref{sec:intro}. 
According to \cite{L}, the spectrum of $\bm{A}_{\widetilde{\sigma}}$, besides potentially $\br{0}$, is discrete and the real part of its eigenvalues must lie in {$(0,2\norm{\widetilde{\sigma}}_{\infty}]$.} {A more detailed investigation of the spectrum can be found in \cite{CZ}.}\\
The eigenvalue problem
$$\bm{A}_{\widetilde{\sigma}}\begin{pmatrix}u \\v\end{pmatrix}= \gamma \begin{pmatrix}u \\v\end{pmatrix},$$
with $\gamma\not=0$, is equivalent to the set of equations
\begin{equation}\nonumber
v^{\prime\prime}(\xi)=\gamma\pa{\gamma - 2\widetilde{\sigma}(\xi)}v(\xi), \quad v(\xi)=-\gamma u(\xi).
\end{equation}
{To find $\tilde D(0)$ it is sufficient to consider only eigenvalues with $\Re \gamma \in (0,2\sigma_1)=(0,2\pi)$, since this complex strip already includes one (real) eigenvalue as we shall see below.}
With the notation $\tau_{1,2}(\gamma):=\sqrt{\gamma(2\sigma_{1,2}-\gamma)}$, which may have to be considered as a complex root, the solution of the ODE is of the form
$$
  v(\xi)=\begin{cases} A_1 \cos(\tau_1(\gamma)\,\xi) +B_1\sin(\tau_1(\gamma)\,\xi), & 0 < \xi \leq \frac{1}{2} \\  
A_2 \cos(\tau_2(\gamma)\,\xi) +B_2\sin(\tau_2(\gamma)\,\xi), & \frac{1}{2}<\xi\leq 1 \end{cases}.
$$
%when $\tau_1$ and $\tau_2$ are not zero. 
With $C^1$--matching conditions at $\xi=0$ and $\xi=\frac12$, the coefficients satisfy the following system of linear equations:
%\begin{comment}
%When considering a function of the form 
%\begin{equation}\nonumber
%\widetilde{\sigma}(\xi)=\begin{cases} \sigma_1, & 0\leq \xi \leq \frac{1}{2} \\ \sigma_2, & \frac{1}{2}<\xi\leq 1 \end{cases},
%\end{equation}
%one finds that 
%\begin{equation}\nonumber
%v(\xi)=\begin{cases}
%v_1(\xi), & 0\leq \xi \leq \frac{1}{2} \\ v_2(\xi), & \frac{1}{2}<\xi \leq 1
%\end{cases},
%\end{equation}
%with 
%$$v_1(\xi)=A_1 e^{\sqrt{\gamma\pa{\gamma-2\sigma_1}}\xi}+B_1e^{-\sqrt{\gamma\pa{\gamma-2\sigma_1}}\xi},$$
%and
%$$v_2(x)=A_2 e^{\sqrt{\gamma\pa{\gamma-2\sigma_2}}\xi}+B_2 e^{-\sqrt{\gamma\pa{\gamma-2\sigma_2}}\xi}.$$
%The $C^1$--matching conditions
%\begin{equation}\nonumber
%\begin{gathered}
%v_1(0)=v_2\pa{1},\\
%v_1\pa{\frac{1}{2}}=v_2\pa{\frac{1}{2}},\\
%v_1^\prime(0)=v_2^\prime(1),\\
%v_1^\prime\pa{\frac{1}{2}}=v_2^\prime\pa{\frac{1}{2}},
%\end{gathered}
%\end{equation}
%imply that one can find $A_1,B_1,A_2$ and $B_2$ as non-zero solutions to the linear set of equations
%\end{comment}
$$\bm{M}(\gamma)\begin{pmatrix}A_1 \\ B_1 \\ A_2 \\ B_2\end{pmatrix}=\begin{pmatrix}0\\0\\0\\0\end{pmatrix}$$
where the matrix $\bm{M}(\gamma)$ is given by
\begin{equation}\nonumber
 \left( \begin{tabular}{cccc}
$1$ & $0$ & $-\cos\pa{\tau_2(\gamma)}$ & $-\sin\pa{\tau_2(\gamma)}$\\
$0$ & 1 &  $\frac{\tau_2(\gamma)}{\tau_1(\gamma)}\sin\pa{\tau_2(\gamma)}$ & $-\frac{\tau_2(\gamma)}{\tau_1(\gamma)}\cos\pa{\tau_2(\gamma)}$\\ 
$\cos\pa{\frac{\tau_1(\gamma)}{2}}$ & $\sin\pa{\frac{\tau_1(\gamma)}{2}}$ & $-\cos\pa{\frac{\tau_2(\gamma)}{2}}$ & $-\sin\pa{\frac{\tau_2(\gamma)}{2}}$\\[1mm]
$\sin\pa{\frac{\tau_1(\gamma)}{2}}$ & $-\cos\pa{\frac{\tau_1(\gamma)}{2}}$ &  $-\frac{\tau_2(\gamma)}{\tau_1(\gamma)}\sin\pa{\frac{\tau_2(\gamma)}{2}}$ & $\frac{\tau_2(\gamma)}{\tau_1(\gamma)}\cos\pa{\frac{\tau_2(\gamma)}{2}}$
\end{tabular}  \right) .
\end{equation}
%\begin{comment}
%\begin{equation}\nonumber
%%\tiny 
%\left( \begin{tabular}{cccc}
%$1$ & $1$ & $-e^{\sqrt{\gamma\pa{\gamma-2\sigma_2}}}$ & $-e^{-\sqrt{\gamma\pa{\gamma-2\sigma_2}}}$\\
%$e^{\frac{\sqrt{\gamma\pa{\gamma-2\sigma_1}}}{2}}$ & $e^{-\frac{\sqrt{\gamma\pa{\gamma-2\sigma_1}}}{2}}$ &  $-e^{\frac{\sqrt{\gamma\pa{\gamma-2\sigma_2}}}{2}}$ & $-e^{-\frac{\sqrt{\gamma\pa{\gamma-2\sigma_2}}}{2}}$\\ 
%$\sqrt{\gamma\pa{\gamma-2\sigma_1}}$ & $-\sqrt{\gamma\pa{\gamma-2\sigma_1}}$ & $-\sqrt{\gamma\pa{\gamma-2\sigma_2}}\;e^{\sqrt{\gamma\pa{\gamma-2\sigma_2}}}$ & $\sqrt{\gamma\pa{\gamma-2\sigma_2}}\;e^{-\sqrt{\gamma\pa{\gamma-2\sigma_2}}}$\\
%$\sqrt{\gamma\pa{\gamma-2\sigma_1}}e^{\frac{\sqrt{\gamma\pa{\gamma-2\sigma_1}}}{2}}$ & $-\sqrt{\gamma\pa{\gamma-2\sigma_1}}e^{-\frac{\sqrt{\gamma\pa{\gamma-2\sigma_1}}}{2}}$ &  $-\sqrt{\gamma\pa{\gamma-2\sigma_2}}\;e^{\frac{\sqrt{\gamma\pa{\gamma-2\sigma_2}}}{2}}$ & $\sqrt{\gamma\pa{\gamma-2\sigma_2}}\;e^{-\frac{\sqrt{\gamma\pa{\gamma-2\sigma_2}}}{2}}$
%\end{tabular}  \right) .
%\end{equation}
%\end{comment}
The requirement that 
\begin{eqnarray}\label{detM}
  \text{det}\pa{\bm{M}(\gamma)}&=&-\sin\pa{\frac{\tau_1(\gamma)}{2}}\sin\pa{\frac{\tau_2(\gamma)}{2}}
  \pa{1+\pa{\frac{\tau_2(\gamma)}{\tau_1(\gamma)}}^2}\\
  &+& 2\frac{\tau_2(\gamma)}{\tau_1(\gamma)} \pa{\cos\pa{\frac{\tau_1(\gamma)}{2}}\cos\pa{\frac{\tau_2(\gamma)}{2}}-1}=0\nonumber
\end{eqnarray}
yields the wanted eigenvalues $\gamma\in\CC$ {with $\Re\gamma\in(0,2\pi)$}.\\ 
\begin{comment}
When $\tau_1=0$ or $\tau_2=0$, i.e. $\gamma=2\sigma_1$ or $\gamma=2\sigma_2$ respectively, $v_1$ or $v_2$ change into a linear function and a new matrix $M(\gamma)$ can be formed. This forces an explicit connection between $\sigma_1$ and $\sigma_2$, as 
$$
\tau_2=2\sqrt{\sigma_1\pa{\sigma_2-\sigma_1}}, \quad \text{when}\quad \tau_1=0, 
$$
and
$$
\tau_1=2\sqrt{\sigma_2\pa{\sigma_1-\sigma_2}}, \quad \text{when}\quad \tau_2=0.
$$
\end{comment}
{In our case, i.e. when $\tilde\sigma(x)$ is given by \eqref{appeq:specific_sigma_tilde}, we find (numerically) that the minimal real part of the non-zero eigenvalues found from \eqref{detM} %$M(\gamma)$ when $\gamma\not=2\pi,8\pi$ 
is approximately} $ 2.72831$, which implies that $\tilde{D}(0)\approx 2.72831$. Thus, the optimal decay rate given by \cite{BS} is
$$\alpha_{\mathrm{BS}}\approx \frac{1}{\pi}\min\pa{\frac{5\pi}{2},2.72831}\approx 0.86845 .$$
\medskip

Summarising, we now have three convergence rates for the case 
\begin{equation}\nonumber
\sigma(x)=\begin{cases} 1, & 0\leq x \leq \pi \\ 4, & \pi<x\leq 2\pi \end{cases}\quad :
\end{equation}
\begin{itemize}
\item Rate from our main Theorem \ref{thm:main}: $\alpha^{\ast}=4-\sqrt{12}\approx 0.5359$.
\item Rate from our improved technique in \S\ref{appsubsec:improved}: $\alpha_{\mathrm{max}}\approx 0.7234$.
\item Rate from the work of Bertrand and Salvarani: $\alpha_{\mathrm{BS}}\approx  0.86845$.
\end{itemize}
This shows, as expected, the lack of optimality in our technique. 

%%%%%%%%%%%%%%%%%%%%%%%%%%%%%%%%%%%%%%%%%%%%%%%%%%%%%%%%%%%%%%%%%%%%%%%%%%%5

\section{Deferred proofs}\label{appsec:proofs}

\begin{proof}[Proof of Lemma \ref{thm:poincare}]
While the proof is standard, we show it here for completion and to fix the sharp constant. Denoting the $k-$th Fourier coefficient of $f$ by 
$$\widehat{f}(k):=\frac{1}{2\pi}\int_{0}^{2\pi}f(x)e^{-i kx}dx$$
we find that
\begin{equation}\label{eq:fourier_of_derivative}
\widehat{f^\prime}(k)=ik\widehat{f}(k)
\end{equation}
for all $k\in\Z$ (including $k=0$). The condition $f_{\mathrm{avg}}=0$ is equivalent to $\widehat{f}(0)=0$ and as such, using Plancherel's equality, we find that
$$\norm{f}^2=\sum_{k\in\Z}\abs{\widehat{f}(k)}^2=\sum_{k\in\Z\setminus\br{0}}\abs{\widehat{f}(k)}^2$$
$$=\sum_{k\in\Z\setminus\br{0}}\frac{\abs{\widehat{f^\prime}(k)}^2}{k^2}\leq \sum_{k\in\Z\setminus\br{0}}\abs{\widehat{f^\prime}(k)}^2=\sum_{k\in\Z}\abs{\widehat{f^\prime}(k)}^2=\norm{f^\prime}^2,$$
completing the proof.
\end{proof}

\bigskip
\begin{proof}[Proof of Lemma \ref{lem:properties_of_antiderivative}]
Since for any function $h\in L^1\pa{\T}$
$$\pa{h-h_{\mathrm{avg}}}_{\mathrm{avg}}=0\ ,$$
we conclude \eqref{item:avg} from the definition of $\partial_x^{-1}f(x)$. To show \eqref{item:diff} we invoke the fundamental theorem of calculus (the version from Lebesgue theory), and to show \eqref{item:anti_of_deri} we notice that if $f$ is differentiable
$$\partial^{-1}_x\pa{\partial_x f}(x)=\int_{0}^{x} \partial_y f(y)dy-\frac{1}{2\pi}\int_{0}^{2\pi} \pa{\int_{0}^{x} \partial_y f(y)dy}dx$$
$$=f(x)-f(0)-\frac{1}{2\pi}\int_{0}^{2\pi} \pa{f(x)-f(0)}dx=f(x)-f_{\mathrm{avg}}. $$
Lastly, we notice that the continuity of $\partial_x^{-1}f(x)$ as a function on the interval $[0,2\pi]$ is a standard result from Analysis. To conclude the continuity on the torus, though, we must also show that $\partial_x^{-1}f (0)=\partial_x^{-1} f\pa{2\pi}$. This is equivalent to
$$0=\int_{0}^{0}f(x)dx = \int_{0}^{2\pi}f(x)dx=2\pi f_{\mathrm{avg}},$$
which is exactly the additional assumption. In addition, \eqref{eq:fourier_coefficients_of_antiderivative} for $k\not= 0$ follows immediately from \eqref{eq:fourier_of_derivative} and \eqref{item:diff}. For $k=0$ we use
%have that $\widehat{f}(0)=f_{\mathrm{avg}}=0$, making the right hand side of \eqref{eq:fourier_coefficients_of_antiderivative} equal to zero, while 
$$\widehat{\partial^{-1}_x f}\pa{0}=\pa{\partial^{-1}_x f}_{\mathrm{avg}}=0,$$
according to \eqref{item:avg}. The proof is thus complete.
\end{proof}

\bigskip
\begin{proof}[Proof of Lemma \ref{lem:entropy_equivalence}]
We will establish that 
$$\abs{\frac{\theta}{2\pi}\int_{0}^{2\pi} \partial_x^{-1}f (x)\overline{g(x)}dx} \leq \frac{\abs{\theta}}{2}\pa{\norm{f}^2+\norm{g}^2} $$
from which \eqref{eq:entropy_equivalence_always}, \eqref{eq:entropy_equivalence_restricted} and \eqref{eq:entropy_equivalence} all follow. Indeed, using the Cauchy-Schwarz inequality, the Poincar\'e inequality -- which is valid since $(\partial^{-1}_x f)_{\mathrm{avg}}=0$ - and part \eqref{item:diff} of Lemma \ref{lem:properties_of_antiderivative} we conclude that 
$$\abs{\frac{\theta}{2\pi}\int_{0}^{2\pi} \partial_x^{-1}f (x)\overline{g(x)}dx} \leq \abs{\theta}\norm{\partial_x^{-1}f}\norm{g}\leq \frac{\abs{\theta}}{2}\pa{\norm{\partial_x^{-1}f}^2+\norm{g}^2}$$
$$  \leq \frac{\abs{\theta}}{2}\pa{\norm{\partial_x\pa{\partial_x^{-1}f}}^2+\norm{g}^2}=\frac{\abs{\theta}}{2}\pa{\norm{f}^2+\norm{g}^2}\ .$$
%where we have used the inequality $2\abs{ab}\leq a^2+b^2$. 
The proof is thus completed. 
\end{proof}

\begin{proof}[Proof of \eqref{item:max_less_min_gamma} and \eqref{item:max_less_theta} within the proof of Lemma \ref{lem:value_of_alpha}]
To show \eqref{item:max_less_min_gamma} we notice that $\gamma_{\mathrm{max}}\pa{\theta} \leq \gamma_{\mathrm{min}}\pa{\theta}$ if and only if
$$\frac{2\sqrt{4-\theta^2}+\pa{4-\smax\theta}}{2+\sqrt{4-\theta^2}} \leq \frac{2\sqrt{4-\theta^2}-\pa{4-\smin \theta}}{2-\sqrt{4-\theta^2}}$$
which is equivalent to
$$ 2\pa{8-\theta\pa{\smin+\smax}}+ \sqrt{4-\theta^2}\;\theta\pa{\smax-\smin}\leq 4\pa{4-\theta^2},$$
or
$$ \frac{2\pa{\smax+\smin - 2\theta}}{\smax-\smin} \geq \sqrt{4-\theta^2}.$$
The above inequality is easily satisfied when $\theta \leq \min\pa{\smin,2}$ as in this case
$$\frac{2\pa{\smax+\smin - 2\theta}}{\smax-\smin} \geq  \frac{2\pa{\smax-\smin}}{\smax-\smin}=2\geq \sqrt{4-\theta^2}.$$\\ 
\eqref{item:max_less_theta} is trivial, since $\theta<\smax$ holds if and only if $\gamma_{\mathrm{max}}(\theta)<\theta$.
\begin{comment}
follows immediately from the fact that if $\theta \leq \frac{4}{\smax}$ and $\theta<2$ then
$$\gamma_{\mathrm{max}}\pa{\theta} \leq \frac{2\theta\sqrt{4-\theta^2}}{2\sqrt{4-\theta^2}+\pa{4-\theta^2}}< \theta.$$
\end{comment}
\end{proof}
%%%%%%%%%%%%%%%%%%%%%%%%%%%%%%%%%%%%%%%%%%%%%%%%%%%%%%%%%%%%%%

\bibliographystyle{spmpsci} %other styles: acm, abbrv, simple

\begin{thebibliography}{1}
	
%	\providecommand{\url}[1]{{#1}}
%	\providecommand{\urlprefix}{URL }
%	\expandafter\ifx\csname urlstyle\endcsname\relax
%	\providecommand{\doi}[1]{DOI~\discretionary{}{}{}#1}\else
%	\providecommand{\doi}{DOI~\discretionary{}{}{}\begingroup
%		\urlstyle{rm}\Url}\fi
	
	\bibitem{AAC}
	Achleitner, F., Arnold, A., Carlen, E.A.: \textit{On linear hypocoercive {BGK} models}.
	\newblock In: From particle systems to partial differential equations. {III},
	Springer Proc. Math. Stat., vol. 162, 1--37. Springer (2016).
	
	\bibitem{Achleitner2018}
	Achleitner, F., Arnold, A., Carlen, E.A.: \textit{On multi-dimensional hypocoercive BGK models}. Kinetic \& Related Models 11 (4) 953--1009 (2018).
	
	\bibitem{AAS}
	F. Achleitner, A. Arnold, B. Signorello.: \textit{On optimal decay estimates for ODEs and PDEs with modal decomposition}. Stochastic Dynamics out of Equilibrium, Springer Proceedings in Mathematics and Statistics vol.\ 282, 241--264 (2019)
%	\bibitem{ACT}
%	Arnold, A., Carrillo, J. A., Tidriri,  M. D.:
%	\textit{Large time behavior of discrete kinetic equations with non-symmetric interactions}.
%	Math. Mod. Methods Appl. Sc. 12, No. 11 (2002)

\bibitem{AHJP14}
Albi, G., Herty, M., J\"orres, C., Pareschi, L.:
\textit{Asymptotic preserving time-discretization of optimal control problems for the Goldstein-Taylor model}.
Numer. Meth. for PDEs vol. 30 (6), 1770--1784 (2014). 

\bibitem{ACT02}
Arnold, A., Carrillo, J.A., Tidriri, M.D.:
\textit{Large-time behavior of discrete kinetic equations with non-symmetric interactions}.
Math. Models and Meth. in the Appl. Sc. vol. 12 (11), 1555--1564 (2002). 
	
\bibitem{Arnold2014}
Arnold, A. and Erb, J.: \textit{Sharp Entropy Decay for Hypocoercive and Non-Symmetric
Fokker--Planck Equations With Linear Drift}.
\newblock  Preprint, arXiv:1409.5425 (2014).
	
%\bibitem{AEW17}
%Arnold, A., Einav, A., W{\"{o}}hrer, T.: \textit{On the rates of decay to equilibrium in degenerate and defective Fokker--Planck equations}.
%	\newblock J.\ Differential Equations, vol.\ 264 (11), 6843--6872 (2018).
	
	\bibitem{AJW}
	Arnold, A., Jin, S., W{\"{o}}hrer, T.:
	\textit{Sharp Decay Estimates in Local Sensitivity Analysis for Evolution Equations with Uncertainties: from ODEs to Linear Kinetic Equations}
	\newblock J.\ Differential Equations, vol. 268 (3), 1156--1204 (2020).
	
	%	\bibitem{AAS}
	%	Achleitner, F., Arnold, A. and St{\"{u}}rzer, D.: Large-Time Behavior in Non-Symmetric Fokker-Planck Equations.
	%	Rivista di Matematica della Universit\`a di Parma 6, 1--68 (2015).
	%	
\bibitem{BS}
Bernard, \'{E}., Salvarani, F.: \textit{Optimal Estimate of the Spectral Gap for the Degenerate Goldstein-Taylor Model.}
J. Stat. Phys. 153, 363--375 (2013);\\ 
%https://doi.org/10.1007/s10955-013-0825-6 
\textit{Erratum}. to appear in J. Stat. Phys. (2020).
	
	%	\bibitem{AEW17}
	%	Arnold, A., Einav, A. and W{\"{o}}hrer, T.: {On the rates of decay to equilibrium
	%		in degenerate and defective Fokker--Planck equations}.
	%	\newblock JDE, Volume 264 (11), 6843--6872, (2018).
	
\bibitem{BGK}
Bhatnagar, P.L., Gross, E.P., Krook, M.: \textit{A Model for Collision Processes in Gases. I. Small Amplitude Processes in Charged and Neutral One-Component Systems}.
\newblock Phys. Rev. vol. 94 (3), 511--525 (1954).
	
%	\bibitem{S19}
%	Bouin, E., Dolbeault, J., Mischler, S., Mouhot, C., Schmeiser, C.: \textit{Hypocoercivity without confinement}, Preprint (2019)

\bibitem{CIS87}
Cercignani, C., Illner, R., Shinbrot, W.: \textit{A boundary value
problem for discrete-velocity models}. Duke Math. J. vol. 55 (4),
889--900 (1987).

\bibitem{CZ}
Cox, S., Zuazua E.: \textit{The rate at which energy decays in a damped string}. Comm. Partial Differential Equations vol. 19 , no. 1-2, 213--243 (1994).

\bibitem{DS09}
Desvillettes, L., Salvarani, F.: \textit{Asymptotic  behavior  of  degenerate  linear  transport  equations}.
Bull. Sci. Math. vol. 133 (8), 848--858 (2009).

\bibitem{DMS}	
Dolbeault, J., Mouhot, C., Schmeiser, C.: \textit{Hypocoercivity for linear kinetic equations conserving mass}. Trans. Amer. Math. Soc., 3807--3828 (2015).
	
\bibitem{Ev}
Evans, L.C.: Partial Differential Equations,
\newblock American Mathematical Soc., (2010).

%	\bibitem{G96}
%	Glassey, R.T.: \textit{The Cauchy Problem in Kinetic Theory}, SIAM (1996)

\bibitem{G51}
Goldstein, C.: \textit{On diffusion by discontinuous movements, and on the telegraph
equation}. Quart. J. Mech. Appl. Math. vol. 4, 129--156 (1951).

\bibitem{GT02}
Gosse, L., Toscani, G.:
\textit{An asymptotic-preserving well-balanced scheme for the hyperbolic heat equations}.
Comptes Rendus Math. vol. 334 (4), 337--342 (2002).

\bibitem{Jin99}
Jin, S.: \textit{Efficient Asymptotic-Preserving (AP) Schemes For Some Multiscale Kinetic Equations}. SIAM J. Sc. Comp. vol. 21 (2), 441--454 (1999).

\bibitem{Kaw91}
Kawashima, S.: \textit{Existence and Stability of Stationary Solutions
to the Discrete Boltzmann Equation}. Japan J. Indust. Appl. Math.
vol. 8, 389--429 (1991).
	
%	\bibitem{K87}
%	Kawashima, S.: \textit{The Boltzmann equation and thirteen moments (Review)}, North-Holland Math. Studies, vol. 148,
%	p157--172, (1987).
	
%	\bibitem{K90}
%	Kawashima, S.: \textit{The Boltzmann equation and thirteen moments}, Japan J. Appl. Math. 7: 301, https://doi.org/10.1007/BF03167846 (1990)
	
\bibitem{L}
Lebeau, G.: \textit{\'Equations des ondes amorties}, S\'eminaire \'Equations aux d\'eriv\'ees partielles (\'Ecole Polytechnique), talk no. 15, 1--14 (1993-1994).
	
\bibitem{Pa92}
A. Pazy, \textit{Semigroups of Linear Operators and Applications to
Partial Differential Equations}, Springer, 2nd edition, 1992.

\bibitem{Sa99}
Salvarani, F. \textit{Diffusion limits for the initial-boundary value problem of the Goldstein-Taylor model}. 
Rend. Sem. Mat. Univ. Polit. Torino vol. 57 (3) 211--222 (1999).

\bibitem{T22}
Taylor G.I.: \textit{Diffusion by Continuous Movements}, Proc.\ London Math. Soc., vol. S2-20 (1), 196--212 (1922).
		
\bibitem{Tr13}
Tran, M.-B.:
\textit{Convergence to equilibrium of some kinetic models}.
J. Diff. Eq. vol. 255, 405--440 (2013).

\bibitem{Villani-book}
Villani, C.: Hypocoercivity,
\newblock American Mathematical Soc., (2009).
	
\end{thebibliography}

\end{document}